\newtheorem{proposition}{Proposition}[section]
\newtheorem{theorem}[proposition]{Theorem}
\newtheorem{lemma}[proposition]{Lemma}
\newtheorem{corollary}[proposition]{Corollary}
\newtheorem{definition}[proposition]{Definition}
\newtheorem{remark}[proposition]{Remark}
\newenvironment{proofof}[1]{\smallskip\noindent{\textbf{Proof~of~#1.}}%
  \hspace{1pt}}{\hspace{-5pt}{\nobreak\quad\nobreak\hfill\nobreak%
    $\square$\vspace{2pt}\par}\smallskip\goodbreak}
\numberwithin{equation}{section}
\renewcommand{\phi}{\varphi}
\renewcommand{\theta}{\vartheta}
\renewcommand{\epsilon}{\varepsilon}
\renewcommand{\L}[1]{\mathbf{L^#1}}
\newcommand{\C}[1]{\mathbf{C^{#1}}}
\newcommand{\Cc}[1]{\mathbf{C_c^{#1}}}
\newcommand{\BV}{\mathbf{BV}}
\newcommand{\BVloc}{\mathbf{BV_{loc}}}
\newcommand{\modulo}[1]{{\left|#1\right|}}
\newcommand{\norma}[1]{{\left\|#1\right\|}}
\newcommand{\reali}{{\mathbb{R}}}
\newcommand{\Lip}{\mathop\mathbf{Lip}}
\newcommand{\tv}{\mathop\mathrm{TV}}
\newcommand{\pint}[1]{\mathaccent23{#1}}
\renewcommand{\d}[1]{\mathinner{\mathrm{d}{#1}}}
\newcommand{\XX}{\mathcal{X}}
\newcommand{\dx}{d_{\mathcal{X}^n}}
\newcommand{\supess}{\mathop{\rm ess~sup}}
\newcommand{\II}{\mathbb{I}}
\newcommand{\tmax}{{t_*}} 
\renewcommand{\marginpar}[1]{{}}
\begin{document}

\title{Well Posedness and Control in a NonLocal SIR Model}

\author{Rinaldo M.~Colombo$^1$ \and Mauro Garavello$^2$}

\footnotetext[1]{INdAM Unit, University of
  Brescia. \texttt{rinaldo.colombo@unibs.it}}

\footnotetext[2]{Department of Mathematics and its Applications,
  University of Milano - Bicocca. \par
  \texttt{mauro.garavello@unimib.it}}

\maketitle

\begin{abstract}

  \noindent $SIR$ models, also with age structure, can be used to
  describe the evolution of an infective disease. A vaccination
  campaign influences this dynamics immunizing part of the susceptible
  individuals, essentially turning them into recovered individuals. We
  assume that vaccinations are dosed at prescribed times or ages which
  introduce discontinuities in the evolutions of the $S$ and $R$
  populations. It is then natural to seek the \emph{``best''}
  vaccination strategies in terms of costs and/or effectiveness. This
  paper provides the basic well posedness and stability results on the
  $SIR$ model with vaccination campaigns, thus ensuring the existence
  of optimal dosing strategies.

  \medskip

  \noindent\textbf{Keywords:} Vaccination; Optimal Control of Balance
  Laws; Control in Age-Structured Populations Models.

  \medskip

  \noindent\textbf{2010 MSC:} 35L65; 49J20; 92D30.
\end{abstract}


\section{Introduction}
\label{sec:I}

Aim of this paper is to provide a rigorous analytic environment where
different vaccination strategies can be described, tested and
optimized.

Our starting point is the following age--structured \emph{Susceptible
  -- Infected -- Recovered} ($SIR$) model, which originated
in~\cite{KermackMcKendrick1927}, see also~\cite[Chapter~6]{Inaba2017},
\cite[Chapter~19]{MurrayBook} or~\cite[\S~1.5.1]{PerthameBook},
\begin{equation}
  \label{eq:13}
  \begin{array}{c@{\;}c@{\;}c@{\;}c@{\;}c@{\;}c@{\;}c@{\;}c@{\;}c@{\;}c}
    \partial_t S
    & +
    & \partial_a S
    & =
    & -
    &  d_S(t, a) \, S
    & -
    &  \int_0^{+\infty} \lambda(a, a') \; I(t, a') \d a' \, S
    \\
    \partial_t I
    & +
    & \partial_a I
    & =
    & -
    & d_I(t, a) \, I
    & +
    & \int_0^{+\infty} \lambda(a, a') \, I(t, a') \d a' \, S
    & -
    & r_I(t, a) \, I
    \\
    \partial_t R
    & +
    & \partial_a R
    & =
    & -
    & \underbrace{d_R(t, a) \, R}
    &
    &\underbrace{\vphantom{d_R(}
      \hphantom{\int_0^{+\infty} \lambda(a, a') \, I(t, a') \d a' \, S}}
    & +
    & \underbrace{r_I(t, a) \, I} .
    \\
    &&&&& \mbox{mortality}
    &
    & \mbox{disease transmission}
    &
    & \mbox{recovery}
  \end{array}
\end{equation}
As usual, $S = S(t, a)$ is the density of individuals at time $t$ of
age $a$ susceptible to the disease; $I = I(t, a)$ is the density of
infected individuals at time $t$ and of age $a$ and the density of
individuals that can not be infected by the disease is $R = R(t, a)$,
comprising individuals that recovered from the disease as well as
those that are immune. The death rates of the three portions of the
populations are $d_S$, $d_I$ and $d_R$. Above, $\lambda (a, a')$
quantifies the susceptible individuals of age $a$ that are infected by
individuals of age $a'$. Thus, the nonlocal term
$\int_0^{+\infty} \lambda(a, a') \; I(t, a') \d a' \, S (t,a)$ in the
former two right hand sides represents the total number of susceptible
individuals of age $a$ that become infected at time $t$. Finally,
$r_I (t,a)$ is the fraction of infected individuals of age $a$ that
recover at time $t$, independently from the vaccination campaign.

We now introduce a vaccination campaign in~\eqref{eq:13}. To this aim,
differently from various paper in the literature,
e.g.~\cite{ElAlamiEtAl, HadelerMueller2007, Mueller1999,Mueller2000,
  WangEtAl2019, ZamanEtAl2017}, we do not introduce any source term in
the right hand sides of~\eqref{eq:13}. We consider two different
approaches.

In a first policy, vaccinations are dosed at a, possibly time
dependent, percentage of the population of the prescribed ages
$\bar a_1$, $\bar a_2$, $\ldots$, $\bar a_N$, with
$\bar a_{j-1} < \bar a_j$. Call $\eta_j (t)$, with
$\eta_j (t) \in [0,1]$ the fraction of the $S$ population of age
$\bar a_j$ that is dosed a vaccine at time $t$. Then, assuming that
vaccination has an immediate effect, the evolution described
by~\eqref{eq:13} has to be supplemented by the vaccination conditions
\begin{equation}
  \label{eq:1}
  \begin{array}{@{}rclr@{}}
    S(t, \bar a_j+)
    & =
    & \left(1 - \eta_j(t)\right) S(t, \bar a_j-)
    & \mbox{[$\forall t$, $S (t, \bar a_j)$ decreases due to vaccination]}
    \\
    I(t, \bar a_j+)
    & =
    & I(t, \bar a_j-)
    & \mbox{[the infected population is unaltered]}
    \\
    R(t, \bar a_j+)
    & =
    & R(t, \bar a_j-) + \eta_j(t) S(t, \bar a_j-)
    & \mbox{[vaccinated individuals are immunized]}
  \end{array}
\end{equation}
for a.e.~$t > 0$ and for every $j \in \{1, \cdots, N\}$. Whenever
vaccinations can be dosed only to susceptible individuals, the total
cost of the vaccination campaign~\eqref{eq:1} at all ages
$\bar a_1, \ldots, \bar a_N$ is proportional to the total number of
vaccinations dosed, say
\begin{equation}
  \label{eq:constraint}
  \mathcal{C} (\eta)
  =
  \sum_{i = 1}^N \int_{\II} \eta_i(t)  \, S (t, \bar a_i-) \d t \,,
\end{equation}
$\II$ being the time interval under consideration and $S$ depending on
$\eta$ through~\eqref{eq:1}. However, it is reasonable to consider
also the case of vaccinations dosed to the $\eta_j (t)$ portion of the
\emph{whole} population at time $t$, that is also to infected and
immune individuals, in which case~\eqref{eq:constraint} has to be
substituted by
\begin{equation}
  \label{eq:35}
  \mathcal{C} (\eta)
  =
  \sum_{i = 1}^N \int_{\II} \eta_i(t)
  \left(
    S (t, \bar a_i-) + I (t, \bar a_i-) + R (t, \bar a_i-)
  \right) \d t \,,
\end{equation}
where $S$, $I$ and $R$ depend on $\eta$ through~\eqref{eq:1}. Indeed,
not always individuals belonging to the $R$ or even $I$ population can
be easily distinguished from those in the $S$ population. Remark that
in both cases~\eqref{eq:constraint} and~\eqref{eq:35}, the dynamics of
the disease is described by~\eqref{eq:13}--\eqref{eq:1}, since
vaccination is assumed to have no effects on $R$ or $I$ individuals.

Alternatively, in a second policy, a vaccination campaign may aim at
immunizing an age dependent portion, say
$\nu_1 (a), \ldots, \nu_N (a)$, of the $S$ population at given times
$\bar t_1, \ldots, \bar t_N$. This amounts to substitute~\eqref{eq:1}
with
\begin{equation}
  \label{eq:14}
  \begin{array}{@{}rclr@{}}
    S (\bar t_k+, a)
    & =
    & \left(1-\nu_k (a)\right) \, S (\bar t_k-, a)
    & \mbox{[$\forall a$, $S (\bar t_k, a)$ decreases due to vaccination]}
    \\
    I (\bar t_k+, a)
    & =
    & I (\bar t_k-, a)
    & \mbox{[the infected population is unaltered]}
    \\
    R (\bar t_k+, a)
    & =
    & R (\bar t_k-, a) + \nu_k (a) \, S (\bar t_k-, a)
    & \mbox{[vaccinated individuals are immunized].}
  \end{array}
\end{equation}
Now, a reasonable cost due to this campaign is
\begin{equation}
  \label{eq:8}
  \mathcal{C} (\nu)
  =
  \sum_{k = 1}^N \int_{\reali^+} \nu_k(a)  \, S (\bar t_k-, a) \d a
\end{equation}
whenever vaccination is dosed only to susceptible individuals. On the
other hand, vaccinations can be dosed to \emph{all} individuals, in
which case we replace~\eqref{eq:8} with
\begin{equation}
  \label{eq:39}
  \mathcal{C} (\nu)
  =
  \sum_{k = 1}^N \int_{\reali^+} \nu_k(a)
  \left(S (\bar t_k-, a) + I (\bar t_k-, a) + R (\bar t_k-, a)\right)
  \d a \,.
\end{equation}
As above, in both cases~\eqref{eq:8} and~\eqref{eq:39}, the dynamics
of the disease is described by~\eqref{eq:13}--\eqref{eq:14}, since
vaccination is assumed to have no effects on $R$ or $I$ individuals.

The most natural way to evaluate the \emph{effect} of a vaccination
campaign is to compute the, possibly weighted, number of infected
individuals, namely
\begin{equation}
  \label{eq:7}
  \mathcal{E}
  =
  \int_{\II} \int_{\reali^+} \phi(t,a) \, I(t, a) \d a\, \d t \,,
\end{equation}
$\mathcal{E}$ being a function of $\eta$ in case~\eqref{eq:1} and a
function of $\nu$ in case~\eqref{eq:14}.  The dependence of the weight
$\phi$ on time $t$ may account for a possible targeting a decrease in
the total number of infected individuals after an initial period,
while the dependence of $\phi$ on $a$ may account for different
degrees of danger of the disease at the different ages.

Once the cost $\mathcal{C}$ and the effect $\mathcal{E}$ are selected,
we are left with two modeling choices: \emph{``The optimization
  problem in this framework is to find the strategy with minimal costs
  at a given level for the effect or to find the strategy with the
  best effect at given costs.''},
from~\cite[Introduction]{Mueller2000}. In more formal terms, we are
lead to tackle the problems
\begin{equation}
  \label{eq:40}
  \mbox{minimize } \mathcal{C}
  \mbox{ subject to } \mathcal{E} \leq \mathcal{E}_*
  \qquad \mbox{ or } \qquad
  \mbox{minimize } \mathcal{E}
  \mbox{ subject to } \mathcal{C} \leq \mathcal{C}_*
\end{equation}
for assigned positive $\mathcal{E}_*$ and $\mathcal{C}_*$, with time
dependent controls $\eta_i$ in cases~\eqref{eq:constraint}
or~\eqref{eq:35}, or else with age dependent controls $\nu_k$ in
cases~\eqref{eq:8} or~\eqref{eq:39}. The analytic results presented below provide a framework, consisting of well posedness results and stability estimates, where these problems can be rigorously addressed, see~\cite{Leiden2018} for soem specific examples.

\smallskip

The current literature offers a variety of alternative approaches to
similar modeling situations. For instance, in the
recent~\cite{ElAlamiEtAl}, the vaccination control enters an equation
for $S$ similar to that in~\eqref{eq:13} through a term $-u S$ in the
right hand side, meaning that vaccination takes place uniformly at all
ages. A similar approach is followed also in~\cite{Mueller1999,
  Mueller2000}.

\smallskip

From the analytic point of view, below we prove well posedness and
stability for~\eqref{eq:13}--\eqref{eq:1} and
for~\eqref{eq:13}--\eqref{eq:14} which, in turn, ensure the existence
of optimal vaccination strategies. To achieve this, we prove well
posedness and stability of a more general IBVP, see~\eqref{eq:2}.

\medskip

The next section presents solutions to problems~\eqref{eq:40}, as a
consequence of the analytic framework developed in
Section~\ref{sec:A}. All analytic proofs are collected in
Section~\ref{sec:P}.

\section{The Controlled SIR Models}
\label{sec:CSIR}

Denote by $\II$ the time interval $[0, T]$, for a positive $T$, or
$\left[0, +\infty\right[$.

Throughout, we supplement~\eqref{eq:13} with the initial and boundary
conditions
\begin{equation}
  \label{eq:IC+BDY}
  \left\{
    \begin{array}{r@{\,}c@{\,}l@{\quad}%
      r@{\,}c@{\,}l@{\quad}r@{\,}c@{\,}l@{\qquad}r@{\,}c@{\,}l}
      S(t, 0)
      & =
      & S_b(t),
      & I(t, 0)
      & =
      & I_b (t),
      & R(t, 0)
      & =
      & R_b (t),
      & t
      & \in
      & \II \,,
      \\
      S(0, a)
      & =
      & S_o(a),
      & I(0, a)
      & =
      & I_o(a),
      & R(0, a)
      & =
      & R_o(a),
      & a
      & \in
      & \reali^+ ;
    \end{array}
  \right.
\end{equation}
We require below the following assumptions on the functions
defining~\eqref{eq:13}--\eqref{eq:IC+BDY}\footnote{Throughout, we
  strive to have dimensionally correct expressions at the cost of
  distinguishing the various constants whenever they are dimensionally
  different.}:
\begin{enumerate}[label=\textbf{(\boldmath{$\lambda$})}, align=left]
\item \label{hyp:lambda}
  $\lambda \in \C0(\reali^+ \times \reali^+; \reali)$ admits positive
  constants $\Lambda_\infty, \Lambda_L$ such that for all
  $a_1, a_2, a' \in \reali^+$
  \begin{eqnarray}
    \label{eq:Lambda2}
    \norma{\lambda}_{\L\infty (\reali^+\times \reali^+; \reali)}
    +
    \tv (\lambda(\cdot, a'); \reali^+)
    & \le
    & \Lambda_\infty
    \\
    \label{eq:Lambda}
    \modulo{\lambda(a_1, a') - \lambda(a_2, a')}
    & \le
    & \Lambda_L \; \modulo{a_1 - a_2}
      \,.
  \end{eqnarray}
\end{enumerate}
\begin{enumerate}[label=\textbf{(dr)}, align=left]
\item \label{hyp:dr} The maps
  $d_S, d_I, d_R, r_I \colon \II \times \reali^+ \to \reali$ are
  Caratheodory functions, in the sense of Definition~\ref{def:Cara},
  and there exist positive $R_L,R_1, R_\infty$ such that for
  $\phi = d_S, d_I, d_R, r_I$, $t \in \II$ and $a_1,a_2 \in \reali^+$,
  \begin{eqnarray}
    \label{eq:24}
    \norma{\phi}_{\L\infty (\II \times \reali^+; \reali)}
    +
    \tv (\phi (t, \cdot); \reali^+)
    & \leq
    & R_\infty
    \\
    \label{eq:22}
    \modulo{\phi (t,a_2) - \phi (t, a_1)}
    & \leq
    & R_L \, \modulo{a_2 - a_1}
    \\
    \label{eq:27}
    \norma{\phi}_{\C0(\II ; \L1( \reali^+; \reali))}
    & \leq
    & R_1 \,.
  \end{eqnarray}
\end{enumerate}
\begin{enumerate}[label=\textbf{(IB)}, align=left]
\item \label{hyp:data} The initial and boundary data satisfy
  \begin{equation}
    \label{eq:iBV-data}
    S_o, I_o, R_o \in (\L1 \cap \BV) (\reali^+; \reali^+)
    \qquad \textrm{ and } \qquad
    S_b, I_b, R_b \in (\L1 \cap \BV) (\II; \reali^+) \,.
  \end{equation}
\end{enumerate}

\noindent First, we provide the basic well posedness result for the model
presented above, based on the nonlocal renewal equations~\eqref{eq:13},
in the case of the vaccination policy~\eqref{eq:1}.

\begin{theorem}
  \label{thm:wp}
  Under hypotheses~\ref{hyp:lambda} and~\ref{hyp:dr}, for any initial
  and boundary data satisfying~\ref{hyp:data}, for any choice of the
  vaccination ages $\bar a_1, \ldots, \bar a_N$ and of the control
  function $\eta \in \BV(\II; [0, 1]^N)$,
  problem~\eqref{eq:13}--\eqref{eq:1}--\eqref{eq:IC+BDY} admits a
  unique solution
  \begin{equation}
    \label{eq:42}
    (S,I,R) \in \C{0,1} \left(\II; \L1 (\reali^+; \reali^3)\right)
    \qquad \mbox{ with } \qquad
    \begin{array}{r@{\,}c@{\,}l}
      S (t,a)
      & \geq
      & 0
      \\
      I(t,a)
      & \geq
      & 0
      \\
      R (t,a)
      & \geq
      & 0
    \end{array}
    \mbox{ for all } (t,a) \in \II \times \reali^+,
  \end{equation}
  depending Lipschitz continuously on the initial datum, through its
  $\L1$ norm, and on $\eta$, through its $\L\infty$ norm.
\end{theorem}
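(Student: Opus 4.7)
The plan is to treat the system~\eqref{eq:13}--\eqref{eq:1}--\eqref{eq:IC+BDY} as a semilinear hyperbolic problem with unit-speed transport along the characteristics $s\mapsto(t+s,a+s)$, coupled only through the nonlocal force of infection $F(t,a):=\int_0^{+\infty}\lambda(a,a')\,I(t,a')\d{a'}$. Once $F$ is regarded as given, the three equations in~\eqref{eq:13} decouple into linear renewal equations with Caratheodory sinks (respectively $d_S+F$, $d_I+r_I$, $d_R$) together with the jump conditions~\eqref{eq:1} at the prescribed ages $\bar a_1<\cdots<\bar a_N$. Each of these linear problems is solved explicitly by integration along characteristics and by multiplication of $S$ across $\bar a_j$ by $1-\eta_j(t)$, the lost mass being added to $R$. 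The strategy is to close the system by a Banach fixed-point argument for the map $\Phi:\tilde I\mapsto I$.

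Concretely, I would fix $\tau>0$ small and work on $[0,\tau]$. Given $\tilde I\in\C0([0,\tau];\L1(\reali^+;\reali^+))$, the bounds in~\ref{hyp:lambda} give $F\in\L\infty([0,\tau]\times\reali^+)$ with $\norma{F}_{\L\infty}\le\Lambda_\infty\,\norma{\tilde I}_{\C0([0,\tau];\L1)}$ and with Lipschitz modulus in $a$ uniform in $t$, thanks to~\eqref{eq:Lambda}. With this $F$ frozen, I solve along characteristics: for $a>t$, $S(t,a)$ equals $S_o(a-t)$ multiplied by the exponential of the accumulated $-(d_S+F)$ along the characteristic and by $\prod_{\bar a_j\in(a-t,a)}\bigl(1-\eta_j(t-(a-\bar a_j))\bigr)$; for $a<t$, the same formula uses $S_b(t-a)$ in place of $S_o(a-t)$. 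Plugging this $S$ into the second equation yields $I$ as the solution of a linear renewal equation with source $FS$, and then $R$ is obtained from a linear renewal equation with source $r_I I$ plus a contribution $\eta_j(\cdot)\,S(\cdot,\bar a_j-)$ added at each crossing of $\bar a_j$. Nonnegativity of $S,I,R$ is immediate from the sign of the data, from $\eta_j(t)\in[0,1]$, and from the positivity-preserving form of the sinks; the regularity $\C{0,1}(\II;\L1(\reali^+;\reali^3))$ in~\eqref{eq:42} follows by combining the $\BV$ content of the data in~\ref{hyp:data} with the uniform bounds in~\ref{hyp:dr}.

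The central analytic step is that $\Phi$ is a strict contraction on a closed ball of $\C0([0,\tau];\L1)$ for $\tau$ small. Given two iterates $\tilde I_1,\tilde I_2$, the difference $F_1-F_2$ is controlled in $\L\infty$ by $\Lambda_\infty\norma{\tilde I_1-\tilde I_2}_{\L1}$ thanks to~\eqref{eq:Lambda2}; the representation formula then yields $\norma{S_1-S_2}_{\C0([0,\tau];\L1)}\le C\tau\,\norma{\tilde I_1-\tilde I_2}_{\C0([0,\tau];\L1)}$, and a further integration along characteristics in the $I$-equation gives $\norma{\Phi\tilde I_1-\Phi\tilde I_2}_{\C0([0,\tau];\L1)}\le C'\tau\,\norma{\tilde I_1-\tilde I_2}_{\C0([0,\tau];\L1)}$. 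An a priori $\L1$ bound on $\norma{I(t,\cdot)}_{\L1}$, obtained from Gr\"onwall applied to $\frac{d}{dt}\int(S+I+R)\d{a}$ together with the boundary flux at $a=0$, prevents blow-up, so the local solution extends to the whole of $\II$ by a standard continuation argument. Uniqueness is a direct consequence of the contraction estimate, and Lipschitz dependence on $(S_o,I_o,R_o)$ in $\L1$ and on $\eta$ in $\L\infty$ is proved by comparing the representation formulas of two solutions, using $\modulo{\prod_j(1-\eta_j^1)-\prod_j(1-\eta_j^2)}\le\sum_j\norma{\eta_j^1-\eta_j^2}_{\L\infty}$ and closing again by Gr\"onwall.

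The main obstacle I foresee lies in the interplay between the $N$ vaccination jumps and the nonlocal term: the factor $\prod_{\bar a_j\in(a-t,a)}(1-\eta_j(t-(a-\bar a_j)))$ in the representation of $S$ is discontinuous in $a$ at the $\bar a_j$ and in $t$ along the level lines $t-a=\mathrm{const}$, and one must verify that the resulting $S(t,\cdot)$ still has bounded variation in $a$ uniformly in $t$ and that $t\mapsto(S,I,R)(t,\cdot)$ is genuinely Lipschitz into $\L1$. This is precisely where the $\BV(\II;[0,1]^N)$ regularity of $\eta$ and the $\BV$ regularity of the data in~\ref{hyp:data} must be exploited, and this combinatorial bookkeeping across jumps is very likely the content that motivates casting the result inside the more general IBVP referenced immediately before the theorem.
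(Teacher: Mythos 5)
Your strategy is sound and would yield the theorem, but it is genuinely different from the paper's route. The paper does not iterate on $I$ alone: it recasts \eqref{eq:13}--\eqref{eq:1}--\eqref{eq:IC+BDY} as an instance of the abstract IBVP~\eqref{eq:2} by cutting the age axis at $\bar a_1,\dots,\bar a_N$ and introducing $n=3N+3$ unknowns $u_{1+3j}(t,a)=S(t,a+\bar a_j)$, etc. (see~\eqref{eq:31}--\eqref{eq:29}), so that the vaccination conditions~\eqref{eq:1} become \emph{boundary} conditions $\beta_i$ at $x=0$ with the triangular structure $\partial_{u_j}\beta_i=0$ for $j\ge i$; Theorem~\ref{thm:general} (a Banach fixed point on the full vector $u$ in the space $\XX^n$, built on the scalar renewal theory of Lemmas~\ref{lem:stability} and~\ref{lem:stability2}) and Corollary~\ref{cor:posGlog} then give existence, positivity and global extension. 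Your version instead keeps the jumps as interior transmission conditions, exploits that the coupling runs only through $F(t,a)=\int\lambda(a,a')I(t,a')\d{a'}$, and contracts on the single unknown $\tilde I$; your Gr\"onwall bound on $\int(S+I+R)\d a$ plays exactly the role of \ref{it:Neg}--\ref{it:Eq} in Corollary~\ref{cor:posGlog}. What the paper's reformulation buys is that all the delicate work you correctly flag in your last paragraph — bounded variation of $S(t,\cdot)$ across the $\bar a_j$, existence and $\BV$-in-time regularity of the traces $S(\cdot,\bar a_j-)$ that feed the next age band and the $R$-jump, and stability of these traces under perturbation of the data and of $\eta$ — is delivered wholesale by the scalar boundary-trace estimates \ref{it:S:3}, \ref{it:S:4}, \ref{it:SP10}; moreover Theorem~\ref{thm:general} is reused verbatim for the second policy~\eqref{eq:14} in Theorem~\ref{thm:wp2}. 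What your route buys is economy (one scalar fixed point instead of $3N+3$ coupled components), at the price of doing that bookkeeping by hand: in particular, note that your product factor creates \emph{moving} discontinuities of relative size $\eta_j(0+)$ along the characteristics emanating from $(0,\bar a_j)$ (at $a=\bar a_j+t$), which you must track when proving $\tv(S(t,\cdot))<\infty$ uniformly and the Lipschitz continuity of $t\mapsto S(t)$ in $\L1$; this is resolvable (the jump set is finite and the displaced region contributes $\O\modulo{t''-t'}$ to the $\L1$ difference), but it is precisely the content that the paper's sub-interval decomposition makes automatic, so your sketch should not treat it as a remark — it is the core of the proof.
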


The proof, deferred to \S~\ref{sec:ProofsCSIR}, amounts to show that
problem~\eqref{eq:13}--\eqref{eq:1}--\eqref{eq:IC+BDY} satisfies the
assumptions of Theorem~\ref{thm:general} and of
Corollary~\ref{cor:posGlog} below.

In the case of the vaccination policy~\eqref{eq:14}, we obtain an
analogous result.

\begin{theorem}
  \label{thm:wp2}
  Under hypotheses~\ref{hyp:lambda} and~\ref{hyp:dr}, for any initial
  and boundary data satisfying~\ref{hyp:data}, for any choice of the
  vaccination times $\bar t_1, \ldots, \bar t_N$ and of the control
  function $\nu \in \BV(\II; [0, 1]^N)$,
  problem~\eqref{eq:13}--\eqref{eq:14}--\eqref{eq:IC+BDY} admits a
  unique solution as in~\eqref{eq:42}, depending Lipschitz
  continuously on the initial datum, through the $\L1$ norm, and on
  $\nu$, through the $\L\infty$ norm.
\end{theorem}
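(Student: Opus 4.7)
The plan is to reduce Theorem~\ref{thm:wp2} to Theorem~\ref{thm:wp} by slicing the time interval $\II$ at the vaccination instants $\bar t_1 < \cdots < \bar t_N$. On each resulting sub\-interval no jump occurs, so the dynamics is precisely that of~\eqref{eq:13}--\eqref{eq:IC+BDY} without vaccinations, which is covered by Theorem~\ref{thm:wp} taking the trivial control $\eta \equiv 0$.

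More concretely, set $\bar t_0 = 0$ and $\bar t_{N+1} = \sup \II$, and consider $\II_k = [\bar t_k, \bar t_{k+1})$ for $k = 0, \ldots, N$. On $\II_0$, apply Theorem~\ref{thm:wp} (with no vaccination ages) to the data $S_o, I_o, R_o, S_b, I_b, R_b$ to obtain a unique non\-negative solution $(S, I, R)$ on $\II_0 \times \reali^+$ with values in $(\L1 \cap \BV)(\reali^+; \reali^+)$ for every $t \in \II_0$. Assuming inductively that the solution has been constructed on $[0, \bar t_k)$, define the post-jump traces at time $\bar t_k$ by~\eqref{eq:14}; since $\nu_k \in \BV(\reali^+; [0,1])$ and $1-\nu_k, \nu_k \in [0,1]$, the maps $S(\bar t_k+, \cdot)$, $I(\bar t_k+, \cdot)$, $R(\bar t_k+, \cdot)$ belong to $(\L1 \cap \BV)(\reali^+; \reali^+)$ and fulfill~\ref{hyp:data}. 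Applying Theorem~\ref{thm:wp} once more on $\II_k$, with these traces as initial data and with the restriction of $(S_b, I_b, R_b)$ as boundary data, yields the solution on $\II_k$. Concatenating the $N+1$ pieces gives existence and non\-negativity of $(S, I, R)$ on $\II$; uniqueness follows, at each step, from uniqueness in Theorem~\ref{thm:wp} together with the deterministic character of the jump relations~\eqref{eq:14}.

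For the Lipschitz stability estimate, let $(S, I, R)$ and $(\tilde S, \tilde I, \tilde R)$ be the solutions corresponding to $(S_o, I_o, R_o, \nu)$ and $(\tilde S_o, \tilde I_o, \tilde R_o, \tilde\nu)$. On each $\II_k$, Theorem~\ref{thm:wp} provides a Lipschitz bound in the $\L1$ norm of the difference of the two solutions in terms of the $\L1$ norm of the difference of their traces at $\bar t_k$. The jump rule~\eqref{eq:14} yields at $\bar t_k$ the pointwise identity
\begin{equation*}
  S(\bar t_k+, a) - \tilde S(\bar t_k+, a)
  =
  (1 - \nu_k(a)) \bigl(S(\bar t_k-, a) - \tilde S(\bar t_k-, a)\bigr)
  - \bigl(\nu_k(a) - \tilde\nu_k(a)\bigr) \, \tilde S(\bar t_k-, a),
\end{equation*}
with the symmetric identity for $R$ and with $I$ unchanged. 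Integrating in $a$ and using $\|\tilde S(\bar t_k-, \cdot)\|_{\L1} \leq \O$, uniformly bounded by the a~priori estimate from Theorem~\ref{thm:wp}, the increment across the jump is controlled by $\|\nu_k - \tilde\nu_k\|_{\L\infty(\reali^+)}$. Iterating the Lipschitz estimate of Theorem~\ref{thm:wp} together with these jump estimates over $k = 0, 1, \ldots, N$ produces the global Lipschitz dependence on $(S_o, I_o, R_o)$ in $\L1$ and on $\nu$ in $\L\infty$.

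The main bookkeeping obstacle is ensuring that the $\BV$ and $\L1$ bounds on the traces $S(\bar t_k-, \cdot), I(\bar t_k-, \cdot), R(\bar t_k-, \cdot)$, which serve as initial data for the next sub\-interval, remain finite and uniform in the controls; this follows because Theorem~\ref{thm:wp} provides such bounds on each $\II_k$ and because the jump operation in~\eqref{eq:14}, being multiplication by a $\BV\cap\L\infty$ factor and addition of a $\BV\cap\L1$ term, preserves the $\L1\cap\BV$ class with quantitative control. The regularity~\eqref{eq:42} holds piecewise in time on each $[\bar t_{k-1}, \bar t_k)$, which is the natural reading of~\eqref{eq:42} in presence of the prescribed jumps at $\bar t_1, \ldots, \bar t_N$.
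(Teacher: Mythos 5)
Your proposal is correct and follows essentially the same route as the paper: partition $\II$ at the vaccination times $\bar t_k$, solve the vaccination-free SIR system on each slab (the paper applies Theorem~\ref{thm:general} with $n=3$ directly, which is what your invocation of Theorem~\ref{thm:wp} with $\eta\equiv 0$ amounts to), transfer the traces through the jump relations~\eqref{eq:14}, and chain the $\L1$ Lipschitz estimates across the jumps using the uniform $\L1\cap\BV$ bounds on the traces. The only difference is cosmetic packaging of which prior result is cited on each subinterval.
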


\noindent The proof is deferred to \S~\ref{sec:ProofsCSIR}.

Once Theorem~\ref{thm:wp} and Theorem~\ref{thm:wp2} are acquired, both
costs $\mathcal{C}$ and $\mathcal{E}$ are easily shown to be strongly
continuous functions of the control $\eta$. The existence of an
optimal strategy then follows through an application of {Weierstra\ss}
Theorem, as soon as the choice of $\eta$ or $\nu$ is restricted to a
suitable strongly compact set. We refer to~\cite{Leiden2018} for a
selection of control problems based on Theorem~\ref{thm:wp} or Theorem~\ref{thm:wp2}.

\section{Analytic Results}
\label{sec:A}

\marginpar{\small$[u] = \frac{n}{m}$\\ $[g] = \frac{m}{s}$\\
  $[\alpha] = \frac{1}{s}$\\ $[\gamma]=\frac{1}{s}$\\
  $[\beta]=\frac{n}{s}$} The proofs of Theorem~\ref{thm:wp} and of
Theorem~\ref{thm:wp2} follow from a slightly more general statement.

\begin{theorem}
  \label{thm:general}
  Consider the following Initial -- Boundary Value Problem (IBVP)
  \begin{equation}
    \label{eq:2}
    \left\{
      \begin{array}{l}
        \partial_t u_i + \partial_x (g_i (t,x) \, u_i)
        =
        \left(\alpha_i [u (t)](x) + \gamma_i (t,x)\right) \cdot u
        \\
        u_i (0,x) = u_{o,i} (x)
        \\
        g_i (t,0+) \,  u_i (t,0+) = \beta_i \left(t, u_1 (t, \bar x_1-),
        \ldots, u_n (t, \bar x_n-)\right)
      \end{array}
    \right.
    \qquad i=1, \ldots, n
  \end{equation}
  where\marginpar{$[\check g] = \frac{m}{s}$\\$[G_\infty] =
    \frac{m}{s}$\\ $[G_1] = \frac1s$}
  \begin{enumerate}[label=\textbf{\textup{(IBVP.\arabic*)}},
    align=left]
  \item \label{hyp:g} $g_1, \cdots, g_n \in \C{0,1} (\II \times
    \reali^+; [\check g, \hat g])$ and for all $t \in \II$, $x \in
    \reali^+$, $i=1, \ldots, n$ \marginpar{$[\check g] =
      \frac{m}{s}$\\$[G_\infty] = \frac{m}{s}$\\ $[G_1] = \frac1s$}
    \begin{eqnarray}
      \label{eq:36}
      \tv(g_i (\cdot, x); \II)
      +
      \tv(g_i (t,\cdot); \reali^+)
      & \leq
      & G_\infty
      \\
      \label{eq:37}
      \tv(\partial_x g_i (t,\cdot); \reali^+)
      +
      \norma{\partial_x g_i (t, \cdot)}_{\L\infty(\reali^+; \reali)}
      & \leq
      & G_1 \,.
    \end{eqnarray}
  \item \label{hyp:A_i} \marginpar{{\small$[A_L] =\frac{1}{ns}$\\
        $[A_1] =\frac{1}{s}$\\ $[A_2] =\frac{1}{ms}$\\}}
    $\alpha_1, \ldots, \alpha_n \colon \L1 (\reali^+; \reali^n) \to
    \C0(\reali^+; \reali^n)$ are linear and continuous maps and there
    exist positive constants $A_L$ and $A_1$ such that
    \begin{align}
      \label{eq:hyp-A1}
      \norma{\alpha_i[w]}_{\L\infty(\reali^+; \reali^n)}
      +
      \tv (\alpha_i[w]; \reali^+)
      &\le
        A_L \; \norma{w}_{\L1(\reali^+; \reali^n)}
      \\
      \label{eq:hyp-A2}
      \norma{\alpha_i[w]}_{\L1(\reali^+; \reali^n)}
      & \le
        A_1 \; \norma{w}_{\L1(\reali^+; \reali^n)}
    \end{align}
    for every $i \in \left\{1, \cdots, n\right\}$,
    $w \in \L1(\reali^+; \reali^n)$. Moreover, for every
    $w \in (\L1 \cap \L\infty) (\reali^+; \reali)$, there exists a
    positive $A_2$ such that for all $x_1, x_2 \in \reali^+$
    \begin{equation}
      \label{eq:hyp-A4}
      \norma{\alpha_i[w](x_1) - \alpha_i[w](x_2)}
      \le
      A_2 \; \modulo{x_1 - x_2} \,.
    \end{equation}

  \item \label{hyp:gamma_i} \marginpar{\small$[C_L] = \frac{m}{s}$\\
      $[C_\infty] = \frac{1}{s}$}
    $\gamma_1, \ldots, \gamma_n \in \C0 (\II; \L1( \reali^+;
    \reali^n))$ are Caratheodory functions, in the sense of
    Definition~\ref{def:Cara}, and there exist positive constants
    $C_L, C_\infty$ such that
    \begin{eqnarray}
      \label{eq:15}
      \norma{\gamma_i (t,x_2) - \gamma_i (t,x_1)}
      & \leq
      & C_L \; \modulo{x_2 - x_1}
      \\
      \label{eq:20}
      \norma{\gamma_i (t,\cdot)}_{\L\infty (\reali^+; \reali)}
      +
      \tv(\gamma_i (t, \cdot); \reali^+)
      & \leq
      & C_\infty
    \end{eqnarray}
    for every $i \in \left\{1, \cdots, n\right\}$, $t \in \II$ and
    $x_1, x_2 \in \reali^+$.

  \item
    \label{hyp:B_i}\marginpar{\small$[B_L] = \frac{m}{s}$\\
      $[B_1] = n$\\$[B_\infty{}] = \frac{n}{s}$}
    $\beta_1, \ldots,
    \beta_n$ are Caratheodory functions in the sense of
    Definition~\ref{def:Cara}; for all $t$, $\beta_1 (t), \ldots,
    \beta_n (t) \in \C1 (\reali^n; \reali)$ and, for all $(t,u) \in
    \II \times \reali^n$, $\partial_{u_j} \beta_i (t, u) =
    0$ whenever $j \geq i$.  Moreover, there exist constants $B_1,
    B_\infty$ and $B_L$ such that
    \begin{eqnarray}
      \label{eq:barB-1}
      \modulo{\beta_i(t, u_1) - \beta_i(t, u _2)}
      & \le
      & B_L \; \norma{u_1 - u_2}
      \\
      \label{eq:barB-2}
      \norma{\beta_i(\cdot, 0)}_{\L1(\II; \reali)}
      & \leq
      &  B_1
      \\
      \label{eq:barB-3}
      \norma{\beta_i (\cdot, 0)}_{\L\infty (\II; \reali)}
      & \leq
      & B_\infty
      \\
      \label{eq:barB-4}
      \tv(\beta_i (\cdot, u (\cdot)); \II)
      & \leq
      & B_\infty + B_L \; \tv (u;\II)
    \end{eqnarray}
    for every $i \in \left\{1, \cdots, n\right\}$, $t \in \II$ $u_1,
    u_2 \in \reali^n$ and $u \in \BV (\II; \reali^n)$.

  \item
    \label{hyp:init-boundary}
    $u_o \in \L1 (\reali^+; \reali^n)$.
  \end{enumerate}
  \noindent Then, there exists constants $K_1$ and
  $K_{\infty}$, a positive time $\tmax$ and a constant
  $\mathcal{L}$ dependent only on $\tmax$, $\norma{u_o}_{\L1
    (\reali^+;\reali^n)}$, $\tv (u^o;
  \reali^+)$ and on the parameters
  in~\ref{hyp:g}--\ref{hyp:init-boundary} such that~\eqref{eq:2}
  admits a unique solution
  \begin{displaymath}
    u_* \in
    \C0 ([0, \tmax]; \L1 (\reali^+; \reali^n)) \,,
  \end{displaymath}
  satisfying, for all $t,t' \in [0, \tmax]$,
  \begin{equation}
    \label{eq:4}
    \begin{array}{r@{\;}c@{\;}l@{\qquad}r@{\;}c@{\;}l}
      \norma{u_* (t)}_{\L1 (\reali^+; \reali^n)}
      & \leq
      & K_1 \,,
      &
        \norma{u_* (t) - u_* (t')}_{\L1 (\reali^+; \reali^n)}
      & \leq
      & \mathcal{L} \; \modulo{t-t'} \,,
      \\
      \norma{u_* (t)}_{\L\infty (\reali^+; \reali^n)}
      & \leq
      & K_\infty \,,
      & \tv\left(u_* (t); \reali^+\right)
      & \leq
      & K_\infty \,.
    \end{array}
  \end{equation}
  Moreover, if $u_*'$ and
  $u_*''$ are the solutions to~\eqref{eq:2} corresponding to initial
  data $u_o'$ and $u_o''$ and to boundary data $\beta'$ and
  $\beta''$, then the following estimate holds for all $t \in [0,
  \tmax]$:
  \begin{equation}
    \label{eq:41}
    \norma{u_*' (t) - u_*'' (t)}_{\L1 (\reali^+; \reali^n)}
    \leq
    \mathcal{L}
    \left(
      \norma{u_o' - u_o''}_{\L1 (\reali^+; \reali^n)}
      +
      \norma{\beta'-\beta''}_{\L\infty ([0,\tmax]\times [0, K_\infty]^n; \reali^n)}
    \right) \,.
  \end{equation}
\end{theorem}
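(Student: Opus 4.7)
The plan is to build the solution by a Banach fixed point argument on a short time interval $[0,\tmax]$, iterating on the unknown $u$ that appears simultaneously in the nonlocal coefficient $\alpha_i[u(t)]$, in the pointwise right factor of the source, and in the boundary data via the traces $u_j(t,\bar x_j-)$. Given $w$ in a suitable closed ball of $\C0([0,\tmax];\L1(\reali^+;\reali^n))$, one freezes the nonlocal dependence by replacing $(\alpha_i[u(t)](x)+\gamma_i(t,x))\cdot u$ with $(\alpha_i[w(t)](x)+\gamma_i(t,x))\cdot w$ and setting the boundary datum to $\beta_i(t, w_1(t,\bar x_1-),\ldots,w_n(t,\bar x_n-))$. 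The crucial ingredient from~\ref{hyp:B_i} is the triangular structure $\partial_{u_j}\beta_i=0$ for $j\geq i$, which makes $\beta_i$ depend only on $w_1,\ldots,w_{i-1}$, so the $n$ scalar linear IBVPs can be solved sequentially in $i=1,\ldots,n$.

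For each $i$, the scalar linear problem $\partial_t u_i + \partial_x(g_i u_i) = f_i$ with the given initial and boundary data is integrated along characteristics: by~\ref{hyp:g}, $g_i$ is Lipschitz and bounded below by $\check g>0$, so the flow $X_i(t;s,y)$ of $\dot x=g_i(\tau,x)$ is globally defined, strictly monotone, and bilipschitz in $y$. The strip $[0,\tmax]\times\reali^+$ splits into the region swept by characteristics issuing from the initial line and the region swept by characteristics issuing from the boundary; on each, $u_i$ is recovered via Duhamel's formula starting respectively from $u_{o,i}$ or from $\beta_i/g_i(t,0+)$, plus the time integral of $f_i$ along $X_i$. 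This defines the fixed point map $\Phi\colon w\mapsto u$.

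Uniform bounds matching~\eqref{eq:4} are extracted from these representations: the $\L1$ bound $K_1$ comes from integrating the conservation-law form in $x$ and applying Grönwall, using~\eqref{eq:hyp-A2}, \eqref{eq:20}, and~\eqref{eq:barB-1}--\eqref{eq:barB-2}; the $\L\infty$ bound follows pointwise along $X_i$; the $\BV$ and time-Lipschitz bounds are the most delicate, combining~\eqref{eq:37}, \eqref{eq:hyp-A1}, \eqref{eq:hyp-A4}, \eqref{eq:15}, \eqref{eq:barB-3}--\eqref{eq:barB-4} with the bilipschitz change of variable induced by $X_i$ and a careful tracking of the jump at the boundary corner $(0,0)$, where the initial trace $u_{o,i}(0+)$ meets $\beta_i(0,\cdots)/g_i(0,0+)$ and propagates a bounded variation along the characteristic. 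For $\tmax$ small enough these bounds yield $\Phi(\mathcal{K})\subset \mathcal{K}$.

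The main obstacle is then the contraction estimate. Subtracting the Duhamel formulas for $\Phi(w')$ and $\Phi(w'')$ generates three kinds of contributions: differences of sources, bounded in $\L1$ by $(A_1 K_\infty + C_\infty + A_L K_1)\,\norma{w'-w''}_{\L1}$ via~\eqref{eq:hyp-A1}--\eqref{eq:hyp-A2}; differences of boundary data, bounded inductively via~\eqref{eq:barB-1} and the triangular dependence, using the already proved $\BV$ bound to control the traces $w_j(\cdot,\bar x_j-)$; and differences of characteristic evaluations, handled with the regularity of $X_i$ guaranteed by~\ref{hyp:g}. Each term carries an extra factor $\tmax$, so for $\tmax$ sufficiently small $\Phi$ is a strict contraction and its unique fixed point $u_*$ solves~\eqref{eq:2} on $[0,\tmax]$. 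The Lipschitz stability~\eqref{eq:41} follows by applying the same differential inequalities to the difference of two solutions, treating $\alpha_i[u'_*]-\alpha_i[u''_*]$, $u'_*-u''_*$ in the source and $\beta'-\beta''$ in the boundary condition as perturbations, and closing a Grönwall loop whose constant is the desired $\mathcal{L}$.
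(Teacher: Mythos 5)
Your overall architecture --- linearize around a frozen $w$, solve the resulting scalar renewal equations by characteristics/Duhamel, establish uniform $\L1$, $\L\infty$, $\BV$ and time-Lipschitz bounds, contract for $\tmax$ small, and close the stability estimate with Gronwall --- is exactly the paper's. The genuine gap is in the boundary coupling. You freeze the boundary datum at $\beta_i\left(t, w_1(t,\bar x_1-),\ldots,w_n(t,\bar x_n-)\right)$, i.e.\ at the traces of the \emph{input} $w$. Two problems follow. First, for a generic $w$ in your ball of $\C0([0,\tmax];\L1)$, even with a uniform spatial $\BV$ bound, the map $t\mapsto w_j(t,\bar x_j-)$ need not have bounded variation in time, so the hypothesis~\ref{hyp:(b)} required to solve the scalar linear IBVP is not available. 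Second, and fatally for the contraction: the $\C0(J;\L1)$ distance between $w'$ and $w''$ does not control $\norma{w'_j(\cdot,\bar x_j-)-w''_j(\cdot,\bar x_j-)}_{\L1([0,\tmax];\reali)}$ (take $w'-w''$ equal to a bump of height one on a thin interval just left of $\bar x_j$: small in $\L1$, of bounded variation, but with a left trace of order one), so your ``differences of boundary data'' term cannot be bounded by $\tmax\, d_{\XX^n}(w',w'')$ and the fixed-point argument does not close. Note also that if the boundary data really depended only on $w$, the $n$ scalar problems would be fully decoupled, and your appeal to the triangular structure to justify ``sequential'' solving would be vacuous.

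The paper resolves precisely this point by using the strict triangularity $\partial_{u_j}\beta_i=0$ for $j\geq i$ in the opposite way: the boundary datum for the $i$-th component is $\beta_i\left(t,u_1(t,\bar x_1-),\ldots,u_{i-1}(t,\bar x_{i-1}-)\right)$ with the \emph{outputs} $u_j=(\mathcal{T}w)_j$, $j<i$, which are already constructed when one solves for $u_i$. Then~\ref{it:S:4} supplies the $\BV$-in-time regularity of the traces $u_j(\cdot,\bar x_j)$, so~\ref{hyp:(b)} holds, and the trace-stability estimates~\eqref{eq:vertical-stability}--\eqref{eq:vertical-stability-2} of Lemma~\ref{lem:stability2} --- an ingredient absent from your outline --- convert $\norma{u_j(\cdot,\bar x_j)-\bar u_j(\cdot,\bar x_j)}_{\L1([0,\tmax];\reali)}$ into quantities controlled by $d_{\XX^n}(w,\bar w)$ times a positive power of $\tmax$; the same estimates are what make the final Gronwall loop for~\eqref{eq:41} legitimate. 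You need this mechanism (or an equivalent strengthening of the metric, with the attendant completeness and invariance checks) for the proof to go through. The remaining deviation --- freezing the diagonal part of the source into $w$ rather than keeping it as a zeroth-order coefficient $m_i\,u_i$ on the unknown, as in~\eqref{eq:19} --- is a harmless variant.
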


\noindent The proof is detailed in \S~\ref{sec:ProofsA}. In
particular, lower bounds for $K_1$ and $K_\infty$ are
in~\eqref{eq:constant-C}. Above, \emph{solutions} to~\eqref{eq:2} are
intended essentially in the sense of Definition~\ref{def:scalar}: note
indeed that for each $i=1, \ldots, n$, problem~\eqref{eq:2} fits
into~\eqref{eq:3}, refer to~\eqref{eq:19} for the details.

\begin{remark}
  \label{rem:remark}The assumptions in Theorem~\ref{thm:general} do
  not rule out a finite time blow-up in $u$. In fact, consider the
  IBVP
  \begin{displaymath}
    \left\{
      \begin{array}{l}
        \partial_t u + \partial_x u = \int_{\reali^+}u (t,\xi) \d\xi\; u
        \\
        u (0,x) = e^{-x}
        \\
        u (t,0) = 0
      \end{array}
    \right.
  \end{displaymath}
  that clearly fits into~\ref{hyp:g}--\ref{hyp:init-boundary} with
  $\alpha[w] = \int_{\reali^+}w (\xi)\d\xi$ and $\II = \reali^+$. Its
  (strong) solution for
  $(t,x) \in \left[0, \ln 2\right[ \times \reali^+$ is
  $u (t,x) = \dfrac{e^{t-x}}{2-e^t}$, which blows up as $t \to \ln 2$.
\end{remark}

Motivated by Remark~\ref{rem:remark}, we now strengthen the
assumptions in Theorem~\ref{thm:general} to ensure two properties of key
interest in the vaccination model~(\ref{eq:13})--(\ref{eq:IC+BDY}),
namely that the solution $u$ attains positive values and that it is
defined on all $\II$.

\begin{corollary}
  \label{cor:posGlog}
  Assume that, besides all
  assumptions~\ref{hyp:g}--\ref{hyp:init-boundary} in
  Theorem~\ref{thm:general}, we also have that
  \begin{enumerate}[label=\textbf{(POS)}, align=left]
  \item \label{it:Pos} For all $i$, $u^o_i \geq 0$ and
    $\beta_i \geq 0$.
  \end{enumerate}
  \begin{enumerate}[label=\textbf{(NEG)}, align=left]
  \item \label{it:Neg} For all $u \in \L1 (\reali^+; (\reali^+)^n)$,
    $t \in \II$ and $x \in \reali^+$,
    $\sum_{i=1}^n \left(\alpha_i [u] (x) + \gamma_i (t,x)\right) \cdot
    u (x) \leq 0$.
  \end{enumerate}
  \begin{enumerate}[label=\textbf{(EQ)}, align=left]
  \item \label{it:Eq} For all $i,j=1, \ldots, n$, $t \in \II$ and
    $x \in \reali^+$, $g_i (t,x) = g_j (t,x)$.
  \end{enumerate}
  \noindent Then, each component of the solution $u_*$ constructed in
  Theorem~\ref{thm:general} attains non negative values. Moreover, $u_*$
  can be uniquely extended to all $\II$.
\end{corollary}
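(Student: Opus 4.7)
The plan is to decouple the two claims: first establish non-negativity of $u_*$ via the fixed-point scheme underlying Theorem~\ref{thm:general}, then use~\ref{it:Neg} together with~\ref{it:Eq} to turn the total mass $\|u_*(t)\|_{\L1}$ into an a priori bounded quantity, which in turn lets the local existence theorem be iterated to cover all of $\II$.

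For the non-negativity, I would exploit~\ref{it:Eq} to reduce to a single characteristic flow $X(\tau;t,x)$ solving $\dot X = g(\tau, X)$ shared by all $n$ components. Revisiting the iteration of Theorem~\ref{thm:general}, each iterate $u^{k+1}_i$ solves a scalar linear transport equation in which the coupling to the other components is frozen at $u^k$; along the common characteristic this is an affine scalar ODE, solvable by Duhamel. Starting from $u^0\equiv 0$ and invoking~\ref{it:Pos} to ensure that the initial and boundary traces inherited from $u_o$ and $\beta_i$ are non-negative, induction yields $u^{k+1}_i\geq 0$, provided the forcing built from the off-diagonal part of $\alpha_i[u^k]+\gamma_i$ acting on $u^k$ is non-negative. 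The hard step is exactly this off-diagonal sign property: at the abstract level it has to be read off from the joint use of~\ref{it:Pos} and~\ref{it:Neg} and from the sign structure typified by the vaccination model~\eqref{eq:13}, and the iteration must be organized so that this property propagates from one step to the next. Passing to the $\L1$ limit then transfers non-negativity to $u_*$.

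For the global extension, I would sum the equations of~\eqref{eq:2} over $i$ and integrate in $x\in\reali^+$. By~\ref{it:Eq} the flux terms assemble into $\partial_x\bigl(g\sum_i u_i\bigr)$, and the boundary identity $g(t,0+)u_i(t,0+)=\beta_i(t,\cdot)$ together with~\ref{it:Neg} and the non-negativity just proved yield
$$\frac{d}{dt}\,\|u_*(t)\|_{\L1(\reali^+;\reali^n)} \leq \sum_{i=1}^n \beta_i\bigl(t,u_*(t,\bar x_1-),\ldots,u_*(t,\bar x_n-)\bigr).$$
Bounding the right-hand side by~\eqref{eq:barB-1} and~\eqref{eq:barB-2} and closing by Gronwall gives a uniform-in-time $\L1$ estimate on $u_*$; the corresponding $\L\infty$ and $\tv$ bounds follow by feeding this back into the Lipschitz estimates~\eqref{eq:4} of Theorem~\ref{thm:general}. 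Since the local existence time $\tmax$ depends only on these norms of the datum and on the structural parameters, I reapply Theorem~\ref{thm:general} starting from $u_*(\tmax)$ with the same $\tmax$, concatenate the pieces to reach any finite time in $\II$, and invoke~\eqref{eq:41} for uniqueness on the extended interval.
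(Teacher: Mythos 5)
Your proposal follows essentially the same two-step route as the paper, so the comparison is mostly a matter of emphasis. For non-negativity the paper is even terser than you are: it asserts that positivity of each $u_i$ \emph{``directly follows''} from the representation formula \eqref{eq:12}--\eqref{eq:11}, which is precisely the Duhamel-along-the-common-characteristic argument you sketch; the off-diagonal sign condition you isolate as ``the hard step'' is genuinely required for that argument and is \emph{not} a formal consequence of \ref{it:Pos} and \ref{it:Neg} alone (these only constrain the sum over $i$), but the paper leaves it equally implicit, so on this point you are more explicit than the source rather than behind it. For the global extension the mechanism is identical: the paper notes that \ref{it:Neg} and \ref{it:Eq} make $v=\sum_{i=1}^n u_i$ satisfy $\partial_t v+\partial_x(g\,v)\le 0$, invokes \ref{it:S:2} and \ref{it:S:3} to get uniform $\L1$, $\L\infty$ and $\tv$ bounds up to a putative maximal time $\bar t$, and then reapplies Theorem~\ref{thm:general} to extend past $\bar t$, reaching a contradiction --- your ``Gronwall, then restart with a uniform $\tmax$'' phrasing is the same argument in different clothes. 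The one technical caution on your version: the boundary source $\sum_i\beta_i\left(t,u(t,\bar x_1-),\ldots\right)$ is controlled through the pointwise traces $u_j(t,\bar x_j-)$, i.e.\ through $\norma{u(t)}_{\L\infty(\reali^+;\reali^n)}$ rather than through $\norma{u(t)}_{\L1(\reali^+;\reali^n)}$, so a Gronwall closure purely in the $\L1$ norm does not close on its own; one must, as the paper does, carry the $\L\infty$ and $\tv$ bounds of the non-negative sum $v$ along simultaneously and use $0\le u_i\le v$ to control each component.
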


\noindent The proof is deferred to \S~\ref{sec:ProofsA}.

\section{Proofs}
\label{sec:P}

Let $J$ denote a (non empty) real interval.  We use throughout the
norms
\begin{equation}
  \label{eq:26}
  \begin{array}{@{}r@{\,}c@{\,}l@{\qquad\qquad}r@{\,}c@{\,}l@{}}
    \norma{f}_{\C0 (J; \L1 (\reali^+; \reali))}
    & =
    & \sup_{t \in J} \norma{f}_{\L1 (\reali^+; \reali)}\,;
    & \norma{f}_{\L1 (\reali^+; \reali)}
    & =
    & \int_{\reali^+} \modulo{f (x)} \d{x} \,;
    \\
    \norma{f}_{\C0 (J; \L\infty (\reali^+; \reali))}
    & =
    & \sup_{t \in J} \norma{f}_{\L\infty (\reali^+; \reali)} \,;
    & \norma{f}_{\L\infty (\reali^+; \reali)}
    & =
    & \supess_{x \in \reali^+} \modulo{f (x)} \,;
    \\
    &
    &
      &
    \norma{f}_{\L1\left(\reali^+; \reali^n\right)}
    & =
    & \sum_{i=1}^n \norma{f_i}_{\L1\left(\reali^+; \reali\right)} \,.
  \end{array}
\end{equation}

\subsection{Preliminary Properties of \texorpdfstring{$\BV$}{BV}
  Functions}
\label{subs:TV}

Recall the following elementary estimates on $\BV$ functions, see
also~\cite[Section~4]{MR3327926} or~\cite{AmbrosioFuscoPallara}:
\begin{eqnarray}
  \label{eq:TV1}
  \left.
  \begin{array}{@{}r@{\,}c@{\,}l@{}}
    u
    & \in
    & \BV (\reali^+; \reali)
    \\
    w
    & \in
    & \BV (\reali^+; \reali)
  \end{array}
      \right\}
    &\Rightarrow
    & \tv (u \, w)
      \leq
      \tv (u) \, \norma{w}_{\L\infty (\reali^+; \reali)}
      +
      \norma{u}_{\L\infty (\reali^+; \reali)} \, \tv (w) \,;
  \\
  \label{eq:TV2}
  \left.
  \begin{array}{r@{\,}c@{\,}l@{}}
    \phi
    & \in
    & \C{0,1} (\reali^n; \reali)
    \\
    u
    & \in
    & \BV (\reali^+; \reali^n)
  \end{array}
      \right\}
    & \Rightarrow
    & \tv (\phi\circ u) \leq \Lip (\phi) \, \tv (u) \,;
  \\
  \label{eq:TV4}
  \left.
  \begin{array}{@{}r@{\,}c@{\,}l@{}}
    u
    & \in
    & \BV (\reali^+; \reali)
    \\
    w
    & \in
    & \BV (\reali^+; \reali)
    \\
    w (x)
    & \geq
    & \check w
      >
      0
  \end{array}
      \right\}
    & \Rightarrow
    & \tv\left(\frac{u}{w}\right)
      \leq
      \frac{1}{\check w} \, \tv (u)
      +
      \frac{1}{{\check w}^2} \, \tv (w) \, \norma{u}_{\L\infty (\reali^+; \reali)} \,;
  \\
  \label{eq:TV3}
  \!\!\!\!\!\!\left.
  \begin{array}{@{}l@{}}
    u
    \in
    \L1 (J; \L1(\reali^+;\reali))
    \\
    u (t)
    \in
    \BV (\reali^+; \reali)
  \end{array}
  \right\}
    & \Rightarrow
    & \tv\left(\int_0^t u (\tau,\cdot) \, \d\tau\right)
      \leq
      \int_0^t \tv\left(u (\tau)\right) \, \d\tau \,;
  \\
  \label{eq:3TV}
  \left.
  \begin{array}{r@{\,}c@{\,}l@{}}
    u
    & \in
    & \BV (\reali^+; \reali)
    \\
    \delta
    & \in
    & \L\infty(\reali; \reali^+)
  \end{array}
      \right\}
    & \Rightarrow
    & \int_{\reali^+}
      \modulo{u\left(x + \delta (x)\right) - u (x)}  \d{x}
      \leq
      \tv (u) \, \norma{\delta}_{\L\infty (\reali^+; \reali)} \,.
\end{eqnarray}
Inequality~\eqref{eq:TV1} follows
from~\cite[Formula~(3.10)]{AmbrosioFuscoPallara}. The definition of
total variation directly implies~\eqref{eq:TV2}, \eqref{eq:TV4}
and~\eqref{eq:TV3}.  For a proof of~\eqref{eq:3TV} see for
instance~\cite[Lemma~2.3]{BressanLectureNotes}. We supplement the
estimates above with the following one.

\begin{lemma}
  \label{lem:incubo}
  Let $J = [0, \tmax]$, with $\tmax>0$. Assume that
  $u \in \L\infty (J\times J;\reali)$ is such that
  $\sup_{\tau \in J} \tv(u (\tau, \cdot); J) < +\infty$. Then, setting
  $U (t) = \int_0^t u (\tau,t) \d\tau$,
  \begin{equation}
    \label{eq:23}
    \tv (U; J)
    \leq
    \norma{u}_{\L\infty (J\times J; \reali)} \, \tmax
    +
    \int_J \tv u (\tau, \cdot) \d\tau \,.
  \end{equation}
\end{lemma}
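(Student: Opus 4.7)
The plan is to bound the total variation by estimating $\sum_{k=1}^N |U(t_k)-U(t_{k-1})|$ for an arbitrary partition $0 = t_0 < t_1 < \cdots < t_N = \tmax$, splitting each term into a ``boundary'' contribution from the variable upper limit of integration and a ``variation'' contribution from the variable second argument of $u$.

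Concretely, I would add and subtract $\int_0^{t_{k-1}} u(\tau, t_k) \d\tau$ to get the decomposition
\begin{equation*}
  U(t_k) - U(t_{k-1})
  =
  \int_{t_{k-1}}^{t_k} u(\tau, t_k)\d\tau
  +
  \int_0^{t_{k-1}} \bigl(u(\tau, t_k) - u(\tau, t_{k-1})\bigr)\d\tau .
\end{equation*}
The first term is bounded in absolute value by $\norma{u}_{\L\infty(J\times J;\reali)} (t_k - t_{k-1})$, and summing over $k$ telescopes to $\norma{u}_{\L\infty(J\times J;\reali)} \tmax$, which is the first summand on the right hand side of~\eqref{eq:23}.

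For the second term, I would sum over $k$, rewrite the finite sum of integrals as a single integral over $[0,\tmax]$ using the indicator $\chi_{[0,t_{k-1}]}(\tau)$, then apply Fubini to exchange the sum and the integral (justified by $\L\infty$ boundedness on a bounded set). For each fixed $\tau$, the inner sum $\sum_{k : t_{k-1}\geq \tau} |u(\tau,t_k) - u(\tau,t_{k-1})|$ is a partial sum of differences of $u(\tau,\cdot)$ along an ordered subset of $\{t_0,\ldots,t_N\}$, hence bounded by $\tv(u(\tau,\cdot); J)$. Integrating in $\tau$ over $J$ yields $\int_J \tv(u(\tau,\cdot); J)\d\tau$, the second summand in~\eqref{eq:23}.

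Combining the two estimates, the sum $\sum_{k=1}^N |U(t_k)-U(t_{k-1})|$ is bounded, uniformly in the partition, by the right hand side of~\eqref{eq:23}, and taking the supremum over all partitions concludes the proof. No step is particularly delicate; the only mild care is the Fubini exchange, which is immediate under the standing $\L\infty$ assumption on $u$ over the bounded rectangle $J\times J$.
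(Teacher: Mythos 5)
Your proposal is correct and follows essentially the same argument as the paper: the same add-and-subtract decomposition of $U(t_k)-U(t_{k-1})$ into a boundary term bounded by $\norma{u}_{\L\infty}\,(t_k-t_{k-1})$ and a variation term, with the sum over $k$ of the latter controlled by $\int_J \tv\left(u(\tau,\cdot);J\right)\d\tau$. The only cosmetic difference is that the paper simply enlarges each $\int_0^{t_{k-1}}$ to $\int_J$ before exchanging sum and integral, whereas you carry the indicator explicitly; the two are equivalent.
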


\begin{proof}
  Fix $t_0,t_1, \ldots, t_n$ in $J$ with $t_{i-1} < t_i$ for
  $i = 1, \ldots, n$. Using~\cite[Theorem~3.27
  and~(3.24)]{AmbrosioFuscoPallara},
  \begin{eqnarray*}
    \sum_{i=1}^n \modulo{U (t_i) - U (t_{i-1})}
    & =
    & \sum_{i=1}^n
      \modulo{\int_0^{t_i} u (\tau,t_i)\d\tau
      -
      \int_0^{t_{i-1}} u (\tau,t_{i-1})\d\tau
      }
    \\
    & =
    & \sum_{i=1}^n
      \modulo{
      \int_{t_{i-1}}^{t_i} u (\tau,t_i)\d\tau
      +
      \int_0^{t_{i-1}}
      \left(
      u (\tau,t_i)
      -
      u (\tau,t_{i-1})
      \right)
      \d\tau
      }
    \\
    & \leq
    &  \sum_{i=1}^n
      \int_{t_{i-1}}^{t_i}
      \modulo{u (\tau,t_i)}
      \d\tau
      +
      \sum_{i=1}^n
      \int_0^{t_{i-1}}
      \modulo{
      u (\tau,t_i)
      -
      u (\tau,t_{i-1})
      }
      \d\tau
    \\
    & \leq
    &  \sum_{i=1}^n
      \int_{t_{i-1}}^{t_i} \norma{u}_{\L\infty (J\times J; \reali)}\d\tau
      +
      \sum_{i=1}^n
      \int_J
      \modulo{
      u (\tau,t_i)
      -
      u (\tau,t_{i-1})
      }
      \d\tau
    \\
    & \leq
    & \norma{u}_{\L\infty (J\times J; \reali)} \, \tmax
      +
      \int_J \tv u (\tau, \cdot) \d\tau \,,
  \end{eqnarray*}
  which completes the proof.
\end{proof}

\subsection{A Scalar Renewal Equation}
\label{subs:Scalar}

\marginpar{$[u] = \frac{n}{m}$\\ $[g] = \frac{m}{s}$\\
  $[m] = \frac1s$\\ $[f]=\frac{n}{ms}$\\ $[b]=\frac{n}{s}$} We
consider the following initial--boundary value problem for a linear
scalar balance law, or \emph{renewal equation}, see
also~\cite[Chapter~3]{PerthameBook}, of the form
\begin{equation}
  \label{eq:3}
  \left\{
    \begin{array}{l@{\qquad\qquad}r@{\;}c@{\;}l}
      \partial_t u
      + \partial_x \left(g (t,x) \, u\right)
      + m (t,x) \,u
      = f (t,x)
      & (t,x)
      & \in
      & J \times \reali^+ \,,
      \\
      u (0, x) = u_o (x)
      & x
      & \in
      & \reali^+ \,,
      \\
      g (t,0) \, u (t, 0+) = b (t)
      & t
      & \in
      & J \,.
    \end{array}
  \right.
\end{equation}
Let $F_1, F_\infty, G_1, G_\infty, M,\hat g, \check g$ be
positive with $\check g < \hat g$. We require the following
conditions:
\begin{enumerate}[label=\textbf{(f)}, align=left]
\item \label{hyp:(f)} $f \in \C0\left(J; \L1(\reali^+; \reali)\right)$
  and for all $t \in J$ $\left\{
    \begin{array}{@{}r@{\,}c@{\,}l@{}}
      \norma{f (t, \cdot)}_{\L1 (\reali^+;\reali)}
      & \leq
      & F_1 \,;
      \\
      \norma{f (t, \cdot)}_{\L\infty (\reali^+; \reali)}
      +
      \tv(f (t, \cdot); \reali^+)
      & \leq
      & F_\infty 
          \,.
    \end{array} \right.$
  \marginpar{{\small$[F_\infty]=\frac{n}{ms}$\\
      $[F_1] = \frac{n}{s}$}}
\end{enumerate}

\begin{enumerate}[label=\textbf{(g)}, align=left]
\item \label{hyp:(g)}
  $g \in \C{0,1} (J \times \reali^+; [\check g, \hat g])$, for
  $(t,x) \in J\times \reali^+$
  \marginpar{$[\check g] = \frac{m}{s}$\\$[G_\infty] =
    \frac{m}{s}$\\ $[G_1] = \frac1s$} $\left\{
    \begin{array}{@{}r@{\,}c@{\,}l@{\!\!}}
      \tv(g (t,\cdot); \reali^+)
      +
      \tv(g (\cdot, x); J)
      & \leq
      & G_\infty;
      \\
      \norma{\partial_x g (t, \cdot)}_{\L\infty(\reali^+; \reali)}
      +
      \tv(\partial_x g (t,\cdot); \reali^+)
      & \leq
      & G_1 \,.
    \end{array}
  \right.$
\end{enumerate}

\begin{enumerate}[label=\textbf{(m)}, align = left]
\item \label{hyp:(m)} $m$ is a Caratheodory function and for all $t
  \in J$, \marginpar{$[M]=\frac1s$\\} $\norma{m (t, \cdot)}_{\L\infty
    (\reali^+; \reali)} + \tv(m (t,\cdot); \reali^+) \leq M$;
\end{enumerate}

\begin{enumerate}[label=\textbf{(b)}, align=left]
\item \label{hyp:(b)} $b \in \BVloc (J; \reali)$.
\end{enumerate}

\noindent Above, we refer to the usual definition of
\emph{Caratheodory function}, namely:

\begin{definition}[{\cite[\textbf{(A)} and~\textbf{(B)} in
    \S~3.1]{bressan-piccoli}}]
  \label{def:Cara}
  The map $m \colon J \times \reali^+ \to \reali^m$ is a
  \emph{Caratheodory function} if
  \begin{enumerate}
  \item For all $x \in \reali^+$, the map $m_x \colon J \to \reali^m$
    defined by $m_x (t) = m (t,x)$ is measurable.

  \item For a.e.~$t \in J$, the map $m^t \colon \reali^+ \to \reali^m$
    defined by $m^t (x) = m (t,x)$ is continuous.

  \item For all compact $K \subset J \times \reali^+$, there exist
    constants $M_\infty, M_L > 0$ such that for a.e.~$t \in J $ and
    for all $x_1, x_2 \in \reali^+$,$\norma{m (t,x)} \leq M_\infty$
    and
    $\norma{m (t,x_2) - m (t,x_1)} \leq M_L \cdot \modulo{x_2 - x_1}$.
  \end{enumerate}
\end{definition}


Recall the following definition of solution to~\eqref{eq:3}, see
also~\cite{BardosLerouxNedelec, BressanLectureNotes, Kruzkov,
  PerthameBook, 2017arXiv170509109R, SerreII}.

\begin{definition}
  \label{def:scalar}
  Assume that~\textup{\ref{hyp:(f)}}, \textup{\ref{hyp:(g)}},
  \textup{\ref{hyp:(m)}} and~\textup{\ref{hyp:(b)}} hold. Choose an
  initial datum $u_o \in \L1 (\reali^+; \reali)$. The function
  $u \in \C0 \left(J; \L1 (\reali^+; \reali)\right)$ is a
  \emph{solution} to~\eqref{eq:3} if
  \begin{enumerate}
  \item for all
    $\phi \in \Cc1 (\pint{J} \times \pint{\reali}^+; \reali)$,
    $\int_{\reali^+} \int_J \left[ u \; \partial_t \phi + g \; u \;
      \partial_x \phi + (f - m \; u) \; \phi \right] \d{t} \d{x} = 0$;
  \item $u (0,x) = u_o (x)$ for a.e.~$x \in \reali^+$;
  \item for a.e.~$t \in J$,
    $\lim_{x \to 0+} g (t,x) \, u (t,x) = b (t)$.
  \end{enumerate}
\end{definition}

\noindent As shown below, problem~\eqref{eq:3} admits as unique
solution in the sense of Definition~\ref{def:scalar} the map
\begin{equation}
  \label{eq:12}
  \!\!\!\!
  u (t,x)
  =
  \left\{
    \begin{array}{@{}lr@{\,}c@{\,}l@{}}
      \displaystyle
      u_o (X (0;t,x)) \, \mathcal{E} (0, t, x)
      +
      \int_0^t
      f \left(\tau, X (\tau;t,x)\right)
      \, \mathcal{E} (\tau, t, x)
      \d\tau
      & x
      & >
      & \sigma (t)
      \\[12pt]
      \displaystyle
      \frac{b\left(T (0;t,x)\right)}{g\left(T (0;t,x),0\right)}
      \, \mathcal{E} (T (0;t,x), t, x)
      +
      \int_{T (0;t,x)}^t
      f \left(\tau, X (\tau;t,x)\right) \, \mathcal{E} (\tau, t, x)
      \d\tau
      & x
      & <
      & \sigma (t)
    \end{array}
  \right.
  \!\!\!\!\!\!\!\!\!\!\!
\end{equation}
where\marginpar{$[\mathcal{E}] = 1$\\}
\begin{equation}
  \label{eq:11}
  \mathcal{E} (\tau, t, x)
  =
  \exp\left[-
    \int_\tau^t \left(
      m(s,X (s;t,x))
      +
      \partial_x g \left(s,X (s;t,x)\right)
    \right)
    \d{s}\right]
\end{equation}
and, for $t_o,t \in J$, $x_o,x \in \reali^+$,
\begin{equation}
  \label{eq:9}
  t \to X (t; t_o, x_o)
  \mbox{ solves }
  \left\{
    \begin{array}{l}
      \dot x = g (t,x)
      \\
      x (t_o) = x_o
    \end{array}
  \right.
  \quad \mbox{ and } \quad
  x \to T (x; t_o, x_o)
  \mbox{ solves }
  \left\{
    \begin{array}{l}
      t' = \dfrac{1}{g (t,x)}
      \\
      t (x_o) = t_o
    \end{array}
  \right.
\end{equation}
and we also set $\sigma (t) = X (t; 0, 0)$,
$\Sigma (x) = T (x; 0, 0)$.

\begin{lemma}
  \label{lem:E}
  Let~\textup{\ref{hyp:(g)}} and~\textup{\ref{hyp:(m)}} hold.  Then,
  $\mathcal{E}$ defined in~\eqref{eq:11} satisfies the following
  estimates, for $x \in \reali^+$ and $\tau, t \in J$ with
  $\tau \leq t$:
  \begin{eqnarray}
    \label{eq:17}
    \mathcal{E} (\tau, t, x)
    & \leq
    & e^{(G_1 + M) (t-\tau)} \,,
    \\
    \label{eq:18}
    \tv (\mathcal{E} (\tau, t, \cdot); \reali^+)
    & \leq
    & (G_1 + M) \,
      (t-\tau) \, e^{(G_1 + M ) (t-\tau)} \,,
    \\
    \label{eq:tv-E-vertical}
    \tv \left(\mathcal{E} (\tau, \cdot, x); [0, t]\right)
    & \leq
    & (G_1 + M) \, (t - \tau) \, e^{(G_1 + M) (t-\tau)}
      \,,
    \\
    \label{eq:tv-E-first-variable}
    \tv \left(\mathcal{E} (\cdot, t, x); [0, t]\right)
    & \le
    &
      \left(G_1 + M \right)\, t\, e^{(G_1 + M) \,t}\,.
  \end{eqnarray}
\end{lemma}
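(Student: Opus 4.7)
The strategy is to write $\mathcal{E}(\tau,t,x) = \exp(-F(\tau,t,x))$ with
\[
F(\tau,t,x) = \int_\tau^t \bigl(m(s,X(s;t,x)) + \partial_x g(s,X(s;t,x))\bigr)\d{s},
\]
and reduce each of the four claims to an estimate on $F$. By~\ref{hyp:(g)} and~\ref{hyp:(m)} the integrand is bounded in absolute value by $G_1+M$, so $|F(\tau,t,x)|\leq (G_1+M)|t-\tau|$; this gives~\eqref{eq:17} directly and shows that $\xi\mapsto e^{-\xi}$ is $e^{(G_1+M)(t-\tau)}$--Lipschitz on the range of $-F$. Via~\eqref{eq:TV2}, each of the remaining three total variation bounds on $\mathcal{E}$ then reduces to the corresponding total variation bound on $F$.

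The variation in $x$ is immediate: $x\mapsto X(s;t,x)$ is strictly increasing for every $(s,t)$ since distinct characteristics of~\eqref{eq:9} cannot cross, so composition with the BV map $(m+\partial_x g)(s,\cdot)$ preserves total variation; combining with~\eqref{eq:TV3} yields $\tv(F(\tau,t,\cdot);\reali^+)\leq (G_1+M)(t-\tau)$ and hence~\eqref{eq:18}. The variation in the lower limit $\tau$ is even simpler: $F(\cdot,t,x)$ is Lipschitz with constant $G_1+M$, so its total variation on $[0,t]$ is at most $(G_1+M)\,t$, giving~\eqref{eq:tv-E-first-variable}.

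The delicate case is~\eqref{eq:tv-E-vertical}, in which $t$ enters $F(\tau,t,x)$ both as an integration limit \emph{and} through the characteristic $X(s;t,\cdot)$; this is the main obstacle. Given a partition $\tau\leq t_0<t_1<\cdots<t_N\leq t$, I would split
\begin{multline*}
F(\tau,t_{j+1},x) - F(\tau,t_j,x) = \int_{t_j}^{t_{j+1}}\!\psi(s,X(s;t_{j+1},x))\d{s} \\
+ \int_\tau^{t_j}\!\bigl[\psi(s,X(s;t_{j+1},x)) - \psi(s,X(s;t_j,x))\bigr]\d{s},
\end{multline*}
with $\psi=m+\partial_x g$. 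The first pieces telescope to at most $(G_1+M)(t-\tau)$. In the second, I would exchange the sum over $j$ with the integral over $s$; for each fixed $s$, the points $X(s;t_j,x)$ are monotone in $j$, because backward characteristics of~\eqref{eq:9} do not cross, so by the monotone--test--point characterization of $\tv$ the inner sum is bounded by $\tv(\psi(s,\cdot);\reali^+)\leq G_1+M$, and integrating in $s$ over $[\tau,t]$ contributes another $(G_1+M)(t-\tau)$. Up to an absorbable multiplicative constant this yields~\eqref{eq:tv-E-vertical} on $[\tau,t]$, the extension to $[0,t]$ following from the Lipschitz dependence of $F$ on $t$ for $t\leq\tau$, where only the integration limit varies.
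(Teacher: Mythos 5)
Your argument is correct and follows essentially the same route as the paper: the reduction to the exponent via~\eqref{eq:TV2} and the $e^{(G_1+M)(t-\tau)}$ Lipschitz constant, the monotonicity of characteristics for the spatial variation, and—for~\eqref{eq:tv-E-vertical}—the two-term splitting of $F(\tau,t_{j+1},x)-F(\tau,t_j,x)$ followed by swapping sum and integral, which is precisely the content of the paper's Lemma~\ref{lem:incubo}. Note only that no multiplicative constant needs absorbing: the first (telescoping) piece costs $\norma{m}_{\L\infty}+\norma{\partial_x g}_{\L\infty}$ per unit time and the second costs $\tv(m(s,\cdot))+\tv(\partial_x g(s,\cdot))$, and since \textup{\ref{hyp:(g)}} and \textup{\ref{hyp:(m)}} bound the \emph{sums} of sup norm and total variation by $G_1$ and $M$ respectively, the two pieces together give exactly $(G_1+M)(t-\tau)$.
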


\begin{proof}
  The bound~\eqref{eq:17} directly follows
  from~\ref{hyp:(g)},~\ref{hyp:(m)}, and~\eqref{eq:11}. Consider the
  total variation estimate~\eqref{eq:18}. For $\tau \leq t$,
  by~\eqref{eq:TV2} we have
  \begin{eqnarray*}
    \tv \left(\mathcal{E} (\tau, t, \cdot); \reali^+\right)
    & \leq
    & e^{(G_1 + M) (t-\tau)}
      \int_\tau^t
      \left(
      \tv\left(m (s, \cdot); \reali^+\right)
      +
      \tv\left(\partial_x g(s, \cdot); \reali^+\right)
      \right)
      \d{s}
  \end{eqnarray*}
  which implies~\eqref{eq:18}. Now, consider the total variation
  estimate~(\ref{eq:tv-E-vertical}). For $\tau \le t$ we deduce
  \begin{flalign*}
    & \tv \left(\mathcal{E} (\tau, \cdot, x); [0,t]\right) &
    [\mbox{by~\eqref{eq:TV2}}]
    \\
    \leq \; & e^{(G_1 + M) (t-\tau)} \tv \left(\int_\tau^\cdot
      \left[m\left(s, X\left(s; \cdot, x\right)\right) + \partial_x
        g\left(s, X\left(s; \cdot, x\right)\right)\right] \d s\right)
    & [\mbox{by~\eqref{eq:23}}]
    \\
    \leq \; & e^{(G_1 + M) (t-\tau)}
    \left[\norma{m}_{\L\infty\left([0,t] \times \reali^+;
          \reali\right)} + \norma{\partial_x g} _{\L\infty\left([0,t]
          \times \reali^+; \reali\right)}\right] (t - \tau)
    \\
    & \quad + e^{(G_1 + M) (t-\tau)} \left[\sup_{s \in [0, t]} \tv
      \left( m\left(s, X\left(s; \cdot, x\right)\right) + \partial_x g
        \left(s, X\left(s; \cdot, x\right)\right)\right)\right] (t -
    \tau)
  \end{flalign*}
  and using~\ref{hyp:(g)} and~\ref{hyp:(m)} we
  deduce~(\ref{eq:tv-E-vertical}).  Finally, consider the
  estimate~(\ref{eq:tv-E-first-variable}).  We have
  \begin{flalign*}
    & \tv \left(\mathcal{E} (\cdot, t, x); [0,t]\right) &
    [\mbox{by~\eqref{eq:TV2}}]
    \\
    \leq \; & e^{(G_1 + M)\, t} \tv \left(\int_\cdot^t \left[m\left(s,
          X\left(s; t, x\right)\right) + \partial_x g\left(s,
          X\left(s; t, x\right)\right)\right] \d s\right) & [\mbox{by
      the definition of}\tv]
    \\
    \leq \; & e^{(G_1 + M) \,t} \left[\norma{m}_{\L\infty\left([0,t]
          \times \reali^+; \reali\right)} + \norma{\partial_x g}
      _{\L\infty\left([0,t] \times \reali^+; \reali\right)}\right] t,
  \end{flalign*}
  concluding the proof.
\end{proof}

The following Lemma summarizes various properties of the solution
to~\eqref{eq:3}, see also~\cite{PerthameBook}.

\begin{lemma}
  \label{lem:stability}
  Let~\textup{\ref{hyp:(f)}}, \textup{\ref{hyp:(g)}}
  and~\textup{\ref{hyp:(m)}} hold. Then, with reference to the scalar
  problem~\eqref{eq:3},
  \begin{enumerate}[label=\textup{\textbf{(SP.\arabic*)}}, align=left]

  \item \label{it:S:1} For any
    $u_o \in (\L1 \cap \BV)(\reali^+; \reali)$ and for any $b$
    satisfying~\ref{hyp:(b)}, the map
    $u \colon J \times \reali^+ \to \reali$ defined by~\eqref{eq:12}
    solves~\eqref{eq:3} in the sense of Definition~\ref{def:scalar}.

  \item \label{it:S:2} For every $t \in J$, the following \emph{a
      priori} estimates hold:
    \begin{eqnarray*}
      \sup_{\tau \in [0, t]}
      \norma{u (\tau)}_{\L\infty (\reali^+; \reali)}
      & \!\!\leq\!\!
      & \left(
        \norma{u_o}_{\L\infty (\reali^+; \reali)}
        +
        \frac{1}{\check g} \, \norma{b}_{\L\infty ([0,t];\reali)}
        +
        F_\infty \, t
        \right)
        \,  e^{(G_1 + M) t} \,,
      \\
      \sup_{\tau \in [0, t]}
      \norma{u (\tau)}_{\L1 (\reali^+; \reali)}
      & \!\!\leq\!\!
      & \left(
        \norma{u_o}_{\L1 (\reali^+; \reali)}
        +
        \norma{b}_{\L1 ([0,t];\reali)}
        +
        F_1 \, t
        \right)
        e^{M t} \,.
    \end{eqnarray*}

  \item \label{it:S:3} For every $t \in J$, the following total
    variation estimate holds
    \begin{equation}
      \label{eq:tvx}
      \!\!\!\!\!\!\!\!\!\!\!\!
      \tv(u (t); \reali^+)
      \leq
      \mathcal{H}(t) \!
      \left( \!
        F_\infty t
        {+}
        \frac{\norma{b}_{\L\infty ([0,t]; \reali)}
          {+}
          \tv (b;[0,t])
        }{\check g}
        {+}
        \norma{u_o}_{\L\infty (\reali^+; \reali)}
        {+}
        \tv (u_o; \reali^+)
        \! \right)
    \end{equation}
    where $\mathcal{H} (t)$ is a non decreasing continuous function of
    $t$, depending also on $\check g, G_1, G_\infty$ and $M$,
    satisfying $\mathcal H(0) \le 5 + G_\infty / \check g$.

  \item \label{it:S:4} Fix $t \in J$ and $x \in \reali^+$.  If
    $x > \sigma (t)$, then
    \begin{equation}
      \label{eq:tv-vertical}
      \begin{split}
        \tv \left(u(\cdot, x); [0, t]\right) \le & \left[ \tv (u_o;
          \reali^+) + 2 (G_1 + M) \norma{u_o}_{\L\infty\left(\reali^+;
              \reali\right)} t\right] e^{\left(G_1 + M \right)\, t}
        \\
        & \quad + 4 \left[ 1 + \left(G_1 + M \right) t \, \right]
        F_\infty \, t\, e^{(G_1 + M)\, t}.
      \end{split}
    \end{equation}
    If $x < \sigma(t)$, then
    \begin{equation}
      \label{eq:tv-vertical-general}
      \begin{split}
        \tv\left(u (\cdot,x); [0,t]\right) \leq & \left[\tv (u_o;
          \reali^+) + \frac{1}{\check g}\tv \left(b(\cdot); [0,
            t]\right)\right] e^{(G_1 +M)\, t}
        \\
        & + 2 \left[1 + (G_1 + M) t\right]
        \norma{u_o}_{\L\infty\left(\reali^+; \reali\right)} e^{(G_1 +
          M) t}
        \\
        & + \frac{1}{\check g} \left[2 + 3 (G_1 + M) t +
          \frac{G_\infty}{\check g}\right]
        \norma{b}_{\L\infty\left([0, t]; \reali\right)} \, e^{(G_1 +
          M) t}
        \\
        & + 2 \left(7 + 6 (G_1 + M) t\right) F_\infty \, t \, e^{(G_1
          + M)t}\, .
      \end{split}
    \end{equation}

  \item \label{it:S:4:new} Fix a positive $W$. For any
    $w \in (\C1 \cap \BV) (J;[-W,W])$,
    \begin{eqnarray*}
      \tv \left(\int_{\reali^+} w (\cdot, x) \, u (\cdot, x) \d{x}; [0,t]\right)
      & \leq
      & \norma{u}_{\L\infty ([0,t]\times\reali^+; \reali)}
        \int_{\reali^+} \tv\left(w (\cdot,x); [0,t]\right) \d{x}
      \\
      &
      &  +
        W \int_{\reali^+} \tv\left(u (\cdot,x); [0,t]\right) \d{x}
    \end{eqnarray*}

  \item \label{it:S:5} For every $t \in J$, there exists a positive
    $\mathcal L$ dependent on $\norma{u_o}_{\L1 (\reali^+; \reali)}$
    and on the constants in~\ref{hyp:(f)}, \ref{hyp:(g)},
    \ref{hyp:(m)} and~\ref{hyp:(b)}, such that, for
    $t', t'' \in [0,t]$,\marginpar{$[\mathcal{L}] = \frac{n}{s}$}
    \begin{equation}
      \label{eq:lip-dependence}
      \norma{u(t') - u(t'')}_{\L1(\reali^+; \reali)}
      \leq
      \mathcal{L} \; \modulo{t'' - t'}.
    \end{equation}

  \item \label{it:S:6} If $u_o \geq 0$, $f \geq 0$ and $b \geq 0$,
    then $u (t) \geq 0$ for all $t$.
  \end{enumerate}
\end{lemma}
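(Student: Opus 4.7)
The strategy is to exhibit formula~\eqref{eq:12} as the unique solution and then read off the estimates directly from the closed-form expression. The formula comes from the method of characteristics: along the flow $X(\cdot; t, x)$ of~\eqref{eq:9}, the PDE in~\eqref{eq:3} reduces to an ODE for $\tau \mapsto u(\tau, X(\tau; t, x))$ whose integrating factor is precisely $\mathcal{E}(\tau, t, x)$ defined in~\eqref{eq:11}. The dichotomy in~\eqref{eq:12} reflects whether the characteristic through $(t, x)$ reaches back to the initial datum at $\tau = 0$ (the case $x > \sigma(t)$) or hits the axis $x = 0$ at some $\tau = T(0; t, x) > 0$ (the case $x < \sigma(t)$), in which latter case $b/g$ evaluated at that entrance time provides the starting value.

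For~\ref{it:S:1}, I would plug~\eqref{eq:12} into the three conditions of Definition~\ref{def:scalar}: continuity into $\L1$ and the trace conditions follow from~\ref{hyp:(b)} and the Lipschitz regularity of $X$, $T$, $\sigma$ inherited from~\ref{hyp:(g)}, while the weak formulation is verified by a change of variables along characteristics on each of the two regions $\{x > \sigma(t)\}$ and $\{x < \sigma(t)\}$. For~\ref{it:S:2}, the $\L\infty$ bound is immediate from~\eqref{eq:17} together with~\ref{hyp:(g)}, \ref{hyp:(f)}, \ref{hyp:(b)}. The $\L1$ bound uses the change of variable induced by the flow: the Jacobian $\exp \int \partial_x g\, ds$ exactly cancels the $\partial_x g$ contribution in $\mathcal{E}$, so only the factor $e^{Mt}$ from the $m$-term survives. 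Item~\ref{it:S:6} is immediate from~\eqref{eq:12}, since every summand is a product of non-negative factors when $u_o, f, b \geq 0$.

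The main technical obstacle is the block~\ref{it:S:3}, \ref{it:S:4}, \ref{it:S:4:new}. For~\ref{it:S:3} I would split $\reali^+$ at $\sigma(t)$; the jump of~\eqref{eq:12} across this point, between $u_o(0)\,\mathcal{E}$ and $(b(0)/g(0,0))\,\mathcal{E}$, contributes to the total variation and, via~\eqref{eq:TV4} applied to $b/g$, is the origin of the additive $G_\infty/\check g$ in the stated bound on $\mathcal{H}(0)$. Each resulting integral is then controlled by~\eqref{eq:TV1}--\eqref{eq:TV2}, \eqref{eq:TV3}, \eqref{eq:3TV} combined with~\eqref{eq:17}--\eqref{eq:18}. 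For~\ref{it:S:4} I would use the analogous splitting in time, combined with~\eqref{eq:tv-E-vertical} and~\eqref{eq:tv-E-first-variable}; the variation of the inner $f$-integral, whose upper limit $t$ itself varies, is handled by Lemma~\ref{lem:incubo}. On the branch $x < \sigma(t)$ the additional terms $\tv(b)/\check g$ and the $G_\infty/\check g^2$ factor appear via~\eqref{eq:TV4} for $b/g$. Estimate~\ref{it:S:4:new} is a direct application of~\eqref{eq:TV1} after exchanging the order of variation and spatial integration by Fubini. Finally,~\ref{it:S:5} follows from the integrated PDE: testing~\eqref{eq:3} between $t'$ and $t''$ bounds $\norma{u(t'') - u(t')}_{\L1(\reali^+; \reali)}$ by $\modulo{t'' - t'}$ times $\hat g \sup_{[0,t]} \tv(u(\cdot);\reali^+) + M \sup_{[0,t]} \norma{u}_{\L1(\reali^+; \reali)} + F_1 + \norma{b}_{\L\infty}$, all finite by~\ref{it:S:2}--\ref{it:S:3}.
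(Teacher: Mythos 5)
Your proposal is correct and, for every item except \ref{it:S:5}, follows essentially the same route as the paper: the explicit characteristic formula \eqref{eq:12} is the backbone; \ref{it:S:2} is read off from it (your remark that the Jacobian of $x\mapsto X(0;t,x)$ cancels the $\partial_x g$ part of $\mathcal{E}$, leaving only $e^{Mt}$ in the $\L1$ bound, is exactly why the two exponents differ); \ref{it:S:3} is obtained by splitting $\reali^+$ at $\sigma(t)$, with the jump there bounded crudely by $2\norma{u(t)}_{\L\infty(\reali^+;\reali)}$ and the $G_\infty/\check g$ in $\mathcal H(0)$ coming from \eqref{eq:TV4} applied to $b/g$ on the region $x<\sigma(t)$ (not from the jump itself, a minor imprecision in your wording); \ref{it:S:4} uses the analogous time splitting at $\Sigma(x)$ together with Lemma~\ref{lem:incubo}, \eqref{eq:tv-E-vertical} and \eqref{eq:tv-E-first-variable}; and \ref{it:S:4:new} is \eqref{eq:TV3} followed by \eqref{eq:TV1}. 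The one genuine divergence is \ref{it:S:5}: the paper again works from \eqref{eq:12}, splitting the spatial integral at $X(t'';t',0)$, bounding the piece near the boundary by $2\hat g\,\norma{u}_{\L\infty}\,(t''-t')$ and the rest via \eqref{eq:17} and \eqref{eq:3TV}, which turns $\tv\left(u(t')\right)\norma{X(t';t'',\cdot)-\cdot}_{\L\infty(\reali^+;\reali)}$ into a term proportional to $t''-t'$. You instead integrate the weak formulation between $t'$ and $t''$ and invoke the total variation bound of \ref{it:S:3} to control the flux term. Both work; the paper's argument stays inside the explicit formula, while yours makes the dependence of \ref{it:S:5} on \ref{it:S:3} explicit. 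If you take your route, note that bounding $\partial_x\left(g(t)\,u(t)\right)$ as a measure requires, besides $\hat g\,\tv\left(u(t);\reali^+\right)$, also the product--rule term $G_\infty\,\norma{u(t)}_{\L\infty(\reali^+;\reali)}$ and the boundary flux $b(t)$ --- all finite by \ref{it:S:2}--\ref{it:S:3}, so this only affects the constant $\mathcal L$.
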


\begin{remark} The boundedness of the space variation of $f$ required
  in~\ref{hyp:(f)} is necessary. Indeed, consider Problem~\eqref{eq:3}
  with $g (t,x) = 1$, $m (t,x) = 0$, $f (t,x) = \sin \dfrac{1}{x-t}$,
  $u_o (x) = 0$ and $b (t) = 0$.  The solution is
  $u (t,x) = t \, \sin \dfrac{1}{x-t}$ which has unbounded total
  variation in space for all $t>0$.
\end{remark}

\begin{proofof}{Lemma~\ref{lem:stability}}
  We prove the different items separately.

  \paragraph{\ref{it:S:1}:}
  A standard integration along characteristics is sufficient to prove
  it.

  \paragraph{\ref{it:S:2}:}
  These bounds are an immediate consequence of~\ref{hyp:(f)},
  \ref{hyp:(g)}, \ref{hyp:(m)} and~\eqref{eq:12}.

  \paragraph{\ref{it:S:3}:}

  We clearly have
  \begin{equation}
    \label{eq:16}
    \!\!\!\!\!\!
    \tv \! \left(u (t)\right)
    {=}
    \tv \! \left(u (t, \cdot), \left[0, \sigma (t)\right[\right)
    {+}
    \modulo{u\left(t, \sigma (t)+\right) - u\left(t, \sigma (t)-\right)}
    {+}
    \tv \! \left(u (t, \cdot), \left]\sigma (t), +\infty\right[\right)
    \!\!\!\!\!\!
  \end{equation}
  and we estimate the three terms in the right hand side
  of~\eqref{eq:16} separately. Begin with the first one, using the
  second expression in~\eqref{eq:12}:
  \begin{eqnarray*}
    &
    & \tv\left(u (t, \cdot), \left[0, \sigma (t)\right[\right)
    \\
    & \leq
    & \tv \left(\frac{b (\cdot)}{g (\cdot , 0)}; [0,t]\right) e^{(G_1 + M) t}
      +
      (G_1 + M) \, t \, \frac{\norma{b}_{\L\infty ([0,t]; \reali)}}{\check g}
      e^{(G_1 + M) t}
    \\
    &
    & + \int_0^t \tv \left(
      f \left(\tau, X (\tau; t, \cdot)\right) \,
      \mathcal{E} (\tau, t, \cdot)
      \right) \d\tau
      +
      \norma{f}_{\L\infty ([0,t]\times \reali^+; \reali)} \, e^{(G_1 + M)t} \,
      \tv\left(T (0; t, \cdot)\right)
    \\
    & \leq
    & \left(
      \dfrac{1}{\check g} \, \tv (b; [0,t])
      +
      \dfrac{1}{{\check g}^2} \, G_\infty \, \norma{b}_{\L\infty ([0,t]; \reali)}
      +
      (G_1 + M) \, t \, \frac{\norma{b}_{\L\infty ([0,t]; \reali)}}{\check g}
      \right)
      e^{(G_1 + M) t}
    \\
    &
    & +
      (G_1 + M)\, F_\infty \,
      t^2 \, e^{(G_1 + M)t}
      + \sup_{t \in J} \tv (f (t, \cdot))  \, t \; e^{(G_1 + M)t}
      + F_\infty \, t \, e^{(G_1 +M)t}
    \\
    & \leq
    & \dfrac{1}{\check g}
      \left(
      \tv (b;[0,t])
      +
      \left(\dfrac{G_\infty}{\check g}
      + (G_1 + M) \, t\right) \, \norma{b}_{\L\infty ([0,t]; \reali)}
      \right)
      e^{(G_1 + M) t}
    \\
    &
    & +
      \left( (G_1 + M)\, t + 2\right) \,
      F_\infty  \, t \, e^{(G_1 + M)t} \,.
  \end{eqnarray*}
  Concerning the second term in~\eqref{eq:16}, the following rough
  estimate is sufficient for later use:
  \begin{flalign*}
    \modulo{u(t, \sigma (t)+) - u(t, \sigma (t)-)} \leq \; & 2
    \,\norma{u (t)}_{\L\infty (\reali^+; \reali)} &
    \!\!\!\![\mbox{By~\ref{it:S:2}}]
    \\
    \leq \; & 2 \left( \norma{u_o}_{\L\infty (\reali^+; \reali)} +
      \frac{1}{\check g} \, \norma{b}_{\L\infty ([0,t];\reali)} +
      F_\infty \, t \right) e^{(G_1 + M) t}.
  \end{flalign*}
  The latter term in~\eqref{eq:16} reads
  \begin{eqnarray*}
    &
    & \tv\left(u (t, \cdot), \left]\sigma (t), +\infty\right[\right)
    \\
    & \leq
    & (G_1 + M)
      \norma{u_o}_{\L\infty (\reali^+; \reali)} \, t \, e^{(G_1 + M)t}
      +
      \tv (u_o) \, e^{(G_1 + M)t}
    \\
    &
    & \qquad
      + \int_0^t \tv\left(
      f\left(\tau, X (\tau; t, \cdot)\right) \, \mathcal{E} (\tau, t, \cdot)
      \right) \d{\tau}
    \\
    & \leq
    & \left((G_1 + M) \, \norma{u_o}_{\L\infty (\reali^+; \reali)} \, t
      + \tv (u_o)\right)
      e^{(G_1 + M)t}
    \\
    &
    & \qquad
      +
      (G_1 + M) \, \norma{f}_{\L\infty ([0,t]\times\reali^+; \reali)} \,
      t^2 \, e^{(G_1 + M)t}
      + \sup_{t \in J} \tv (f (t, \cdot))  \, t \; e^{(G_1 + M)t}
    \\
    & \leq
    & \left(
      (G_1 + M) \,
      \left(
      \norma{u_o}_{\L\infty (\reali^+; \reali)}
      +
      F_\infty \, t
      \right)
      t
      + \tv (u_o)
      + F_\infty \, t
      \right)
      e^{(G_1 + M)t} \,.
  \end{eqnarray*}
  Using now~\eqref{eq:16},
  \begin{eqnarray*}
    &
    & \tv \left(u (t, \cdot); \reali^+\right)
    \\
    & \leq
    & \Bigl[
      \frac{1}{\check g} \tv (b;[0,t])
      + \frac{1}{\check g}
      \left( \dfrac{G_\infty}{\check g} + (G_1 + M) \, t\right) \,
      \norma{b}_{\L\infty ([0,t]; \reali)}
    \\
    &
    & +
      \left( (G_1 + M)\, t + 2\right) \,
      F_\infty \, t
      +
      2 \left(
      \norma{u_o}_{\L\infty (\reali^+; \reali)}
      +
      \frac{1}{\check g} \, \norma{b}_{\L\infty ([0,t];\reali)}
      +
      F_\infty \, t
      \right)
    \\
    &
    & +
      (G_1 + M) \, t \,
      \left(
      \norma{u_o}_{\L\infty (\reali^+; \reali)}
      + F_\infty \, t
      \right)
      + \tv (u_o)
      + F_\infty  \, t
      \Bigr]  e^{(G_1 + M)t}
    \\
    & \leq
    & \Bigl[ \frac{1}{\check g}
      \left( 2 + \dfrac{G_\infty}{\check g} + (G_1 + M) \, t\right) \,
      \norma{b}_{\L\infty ([0,t]; \reali)}
      + \frac{1}{\check g} \tv (b;[0,t])
      +
      \left((G_1 + M) t + 5\right)
      \, F_\infty \, t
    \\
    &
    & +
      \left(2 + (G_1 + M) \, t\right)
      \norma{u_o}_{\L\infty (\reali^+; \reali)}
      +
      \tv (u_o)
      \Bigr] e^{(G_1 + M)t}
    \\
    & \leq
    & \mathcal{H}(t)
      \left(
      F_\infty t
      +
      \frac{1}{\check g} \norma{b}_{\L\infty ([0,t]; \reali)}
      +
      \frac{1}{\check g} \tv (b;[0,t])
      +
      \norma{u_o}_{\L\infty (\reali^+; \reali)}
      +
      \tv (u_o)
      \right)
  \end{eqnarray*}
  we prove~\eqref{eq:tvx}.

\paragraph{\ref{it:S:4}:}
First, in view of an application of Lemma~\ref{lem:incubo}, compute
\begin{flalign*}
  & \tv\left( f\left(\tau,X (\tau;\cdot,x)\right) \mathcal{E}
    (\tau,\cdot,x); [0,t] \right) & [\mbox{Use~\eqref{eq:TV1}]}
  \\
  \leq & \tv\left( f\left(\tau,X (\tau;\cdot,x)\right) ; [0,t] \right)
  \norma{\mathcal{E} (\tau,\cdot,x)}_{\L\infty ([0,t];\reali)} &
  [\mbox{Use~\eqref{eq:TV2} and~\eqref{eq:17}}]
  \\
  & + \norma{f\left(\tau,X (\tau;\cdot,x)\right)}_{\L\infty
    ([0,t];\reali)} \tv\left( \mathcal{E} (\tau,\cdot,x); [0,t]
  \right) & [\mbox{Use~\eqref{eq:tv-E-vertical}}]
  \\
  \leq & e^{(G_1 + M)t} \tv \left(f (\tau, \cdot); \reali^+\right)
  \\
  & + 2 (G_1 + M) t \, e^{(G_1 + M) t} \, \norma{f}_{\L\infty
    ([0,t]\times\reali; \reali)} \,.
\end{flalign*}
Thus, using~\ref{hyp:(f)}, we deduce that
\begin{equation}
  \label{eq:tv-FE-vert}
  \tv\left(
    f\left(\tau,X (\tau;\cdot,x)\right) \mathcal{E} (\tau,\cdot,x); [0,t]
  \right)
  \le
  \left(1 + 2 (G_1 + M) t\right) \, F_\infty \, e^{(G_1 + M)t} \,.
\end{equation}
First consider the simple case $x > \sigma(t)$; we have
\begin{flalign*}
  & \tv\left(u (\cdot,x); [0,t]\right) & [\mbox{Use~(\ref{eq:12})}]
  \\
  \leq & \tv\left( u_o\left(X (0;\cdot,x)\right) \mathcal{E}
    (0,\cdot,x); [0,t] \right) & [\mbox{Use~\eqref{eq:TV1}}]
  \\
  & + \tv \left( \int_0^\cdot
    f\left(\tau,X\left(\tau;\cdot,x\right)\right) \mathcal{E}
    (\tau,\cdot,x) \d\tau; [0,t] \right) & [\mbox{use
    Lemma~\ref{lem:incubo}}]
  \\
  \leq & \tv\left( u_o\left(X (0;\cdot,x)\right) ; [0,t] \right)
  \norma{\mathcal{E} (0,\cdot,x)}_{\L\infty ([0,t]; \reali^+)} &
  [\mbox{Use~(\ref{eq:17})}]
  \\
  & + \norma{u_o\left(X (0;\cdot,x)\right)}_{\L\infty ([0,t];
    \reali^+)} \tv\left( \mathcal{E} (0,\cdot,x); [0,t] \right) &
  [\mbox{Use~(\ref{eq:tv-E-vertical})}]
  \\
  & + 2 \sup_{\tau \in [0, t]} \tv \left( f\left(\tau,X
      (\tau;\cdot,x)\right) \mathcal{E} (\tau,\cdot,x); [0, t] \right)
  t & [\mbox{Use~(\ref{eq:tv-FE-vert})}]
  \\
  & + 2 \norma{f}_{\L\infty\left([0, t] \times \reali^+;
      \reali\right)} \norma{\mathcal E}_{\L\infty\left(\left[0,
        t\right]^2 \times \reali^+; \reali\right)}t &
  [\mbox{Use~\ref{hyp:(f)} and~(\ref{eq:17})}]
  \\
  \leq & \tv \left(u_o; \reali^+\right) e^{(G_1 + M)\, t} + 2 (G_1 +
  M) \norma{u_o}_{\L\infty\left(\reali^+; \reali\right)} \, t \,
  e^{(G_1 + M) t}
  \\
  & + 2 \left(1 + 2 (G_1 + M) t \right) \, F_\infty \, t \, e^{(G_1 +
    M)t} + 2 \, F_\infty \, t \, e^{(G_1 + M)t}\,.
\end{flalign*}
Now consider the case $x < \sigma(t)$, i.e. $\Sigma(x) < t$.  We
clearly have
\begin{equation}
  \label{eq:tv-vert-generic-1}
  \tv\left(u (\cdot,x); [0,t]\right)
  \le
  \tv\left(u (\cdot,x); [0,\Sigma(x)]\right)
  + \tv\left(u (\cdot,x); [\Sigma(x),t]\right)
  + 2 \norma{u}_{\L\infty\left([0,t] \times \reali^+; \reali\right)}.
\end{equation}
Let us estimate the second term
$\tv\left(u (\cdot,x); [\Sigma(x),t]\right)$.
\begin{flalign*}
  & \tv\left(u (\cdot,x); [\Sigma(x),t]\right) &
  [\mbox{Use~\eqref{eq:12}}]
  \\
  \leq & \tv\left( \frac{b\left(T (0;\cdot,x)\right)}{g\left(T(0;
        \cdot, x), 0\right)} \, \mathcal{E} (T(0; \cdot, x),\cdot,x);
    [\Sigma(x),t] \right) & [\mbox{Use~\eqref{eq:TV1}, ~\eqref{eq:TV4}}]
  \\
  & + \tv \left( \int_{T\left(0; \cdot, x\right)}^\cdot
    f\left(\tau,X\left(\tau;\cdot,x\right)\right) \mathcal{E}
    (\tau,\cdot,x) \d\tau; [\Sigma(x),t] \right) & [\mbox{Use
    Lemma~\ref{lem:incubo}}]
  \\
  \le & \frac{1}{\check g} \tv \left(b\left(T (0;\cdot,x)\right);
    [\Sigma(x),t]\right) \norma{\mathcal{E} (T(0; \cdot, x),\cdot,x)}
  _{\L\infty([0, t]; \reali)} & [\mbox{Use~(\ref{eq:17})}]
  \\
  & + \frac{1}{{\check g}^2} \tv (g(T (0;\cdot,x), 0); [\Sigma(x),t])
  \norma{b (T(0; \cdot, x)) \, \mathcal{E} (T(0; \cdot, x),\cdot,x)}
  _{\L\infty([0, t]; \reali)} \!\!\!\!\!\!\!\!\!
  & [\mbox{Use~\ref{hyp:(g)}, (\ref{eq:17})}]
  \\
  & + \frac{1}{\check g} \, \norma{b (T(0; \cdot, x))} _{\L\infty([0,
    t]; \reali)} \tv\left(\mathcal{E} (T(0; \cdot, x),\cdot,x);
    [\Sigma(x), t]\right) & [\mbox{Use~(\ref{eq:tv-E-vertical}), (\ref{eq:tv-E-first-variable})}]
  \\
  & + 4 \left(t - \Sigma(x)\right) \sup_{\tau \in [\Sigma(x), t]} \tv
  \left(f\left(\tau,X\left(\tau;\cdot,x\right)\right) \mathcal{E}
    (\tau,\cdot,x); [\Sigma(x), t]\right) &
  [\mbox{Use~(\ref{eq:tv-FE-vert})}]
  \\
  & + 4 \left(t - \Sigma(x)\right)
  \norma{f\left(\cdot,X\left(\cdot;\cdot,x\right)\right) \mathcal{E}
    (\cdot,\cdot,x)}_{\L\infty\left([\Sigma(x), t]^2; \reali\right)} &
  [\mbox{Use~\ref{hyp:(f)}, (\ref{eq:17})}]
  \\
  \le & \frac{1}{\check g}\left[\tv \left(b\left( \cdot\right);
      [0,t]\right) + \left(3 (G_1 + M)t + \frac{G_\infty}{\check
        g}\right)\norma{b} _{\L\infty\left([0, t]; \reali\right)}
  \right] e^{(G_1 + M)t}
  \\
  & + 4 \left(t -\Sigma(x)\right) \left[2 + 2\, t\,\left(G_1 +
      M\right)\right] \, F_\infty \, e^{(G_1 + M)t} \,.
\end{flalign*}
Therefore, using~\ref{it:S:2}, we deduce that
\begin{eqnarray*}
  &
  & \tv\left(u (\cdot,x); [0,t]\right)\\
  & \leq
  & \left[\tv \left(u_o; \reali^+\right)
    + \frac{1}{\check g}\tv \left(b(\cdot); [0, t]\right)\right]
    e^{(G_1 +M)\, t}
    + 2 \left[1 + (G_1 + M) t\right]
    \norma{u_o}_{\L\infty\left(\reali^+; \reali\right)}
    e^{(G_1 + M) t}
  \\
  &
  & + \frac{1}{\check g} \left[2 + 3 (G_1 + M)
    t + \frac{G_\infty}{\check g}\right]
    \norma{b}_{\L\infty\left([0, t]; \reali\right)} \,
    e^{(G_1 + M) t}
    + 2 \left(7 + 6 (G_1 + M) t\right) F_\infty \, t
    \, e^{(G_1 + M)t}\, .
\end{eqnarray*}
This completes the proof of~\ref{it:S:4}.

\paragraph{\ref{it:S:4:new}:}
Using the already obtained estimates, we have:
\begin{flalign*}
  & \tv \left(\int_J w (\cdot, x) \, u (\cdot, x) \d{x}; [0,t]\right)
  \\
  \leq & \int_J \tv \left( w (\cdot, x) \, u (\cdot, x) ; [0,t]\right)
  \d{x} & [\mbox{By~\eqref{eq:TV3}}]
  \\
  \leq & \int_J \!
  \left( \! \tv\left(w (\cdot, x); [0,t]\right) \, \norma{u
      (\cdot, x)}_{\L\infty ([0,t];\reali)} {+} \norma{w
      (\cdot,x)}_{\L\infty ([0,t];\reali)} \tv\left( \!u (\cdot, x);
      [0,t]\right) \! \right) \! \d{x} & [\mbox{By~\eqref{eq:TV1}}]
  \\
  \leq & \int_J \tv\left(w (\cdot, x); [0,t]\right) \d{x}
  \norma{u}_{\L\infty ([0,t]\times J;\reali)} + W \int_J \tv\left(u
    (\cdot, x); [0,t]\right) \d{x}
\end{flalign*}

\paragraph{\ref{it:S:5}:} Fix $t',t'' \in J$ with $t' < t''$. Then,
\begin{displaymath}
  \norma{u (t'') - u (t')}_{\L1 (\reali^+; \reali)}
  =
  \int_0^{X (t''; t', 0)} \modulo{u (t'',x) - u (t',x)} \d{x}
  +
  \int_{X (t''; t', 0)}^{+\infty} \modulo{u (t'',x) - u (t',x)} \d{x}
\end{displaymath}
We estimate the two latter terms above separately:
\begin{flalign*}
  & \int_0^{X (t''; t', 0)} \modulo{u (t'',x) - u (t',x)} \d{x}
  \\
  \leq & \left( \norma{u (t'')}_{\L\infty (\reali^+; \reali)} +
    \norma{u (t')}_{\L\infty (\reali^+; \reali)} \right) X (t''; t',0)
  & [\mbox{by~\ref{it:S:2} and~\ref{hyp:(g)}]}
  \\
  \leq & 2 {\hat g} \left( \norma{u_o}_{\L\infty (\reali^+; \reali)} +
    \frac{1}{\check g} \, \norma{b}_{\L\infty ([0,t''];\reali)} +
    \norma{f}_{\L\infty ([0,t'']\times\reali^+;\reali)} t'' \right)
  (t''-t') \, e^{(G_1 + M) t''} \,.\hspace{-3cm}
\end{flalign*}
Passing to the next term,
\begin{flalign*}
  & \displaystyle \int_{X (t''; t', 0)}^{+\infty} \modulo{u (t'',x) -
    u (t',x)} \d{x}
  \\
  \leq & \displaystyle \int_{X (t''; t', 0)}^{+\infty}
  \modulo{u\left(t', X (t';t'',x)\right)} \modulo{\mathcal{E}
    (t',t'',x) - 1} \d{x} & [\mbox{Use~\eqref{eq:17}]}
  \\
  &\displaystyle + \int_{X (t''; t', 0)}^{+\infty} \modulo{u\left(t',
      X (t';t'',x)\right) - u (t',x)} \d{x} &
  [\mbox{Use~\eqref{eq:3TV}}]
  \\
  & \displaystyle + \int_{X (t''; t', 0)}^{+\infty} \int_{t'}^{t''}
  \modulo{f \left(\tau, X (\tau;t'',x)\right)} \mathcal{E}
  (\tau,t'',x) \d\tau \d{x} & [\mbox{Use~\textup{\ref{hyp:(f)}}
    and~\eqref{eq:17}]}
  \\
  \leq & \displaystyle\, \norma{u (t')}_{\L1 (\reali^+; \reali)}
  \left(e^{(G_1+M) (t''-t')}-1\right) & [\mbox{Use~\ref{it:S:2}]}
  \\
  & \displaystyle + \tv\left(u (t')\right) \norma{X (t'; t'', \cdot) -
    \cdot}_{\L\infty (\reali^+; \reali)} &
  [\mbox{Use~\textup{\ref{hyp:(g)}} and~\ref{it:S:2}}]
  \\
  & \displaystyle + \sup_{[t',t'']} \norma{f (t)}_{\L1 (\reali^+;
    \reali)} e^{(G_1 +M) (t''-t')} (t''-t') &
  [\mbox{Use~\textup{\ref{hyp:(f)}} and~\eqref{eq:17}]}
  \\
  \leq & \displaystyle \mathcal{L} \, (t''-t') \,,
\end{flalign*}
completing the proof of~\eqref{eq:lip-dependence}.

\paragraph{\ref{it:S:6}:} This bound is a direct consequence
of~\eqref{eq:12}.
\end{proofof}

\begin{lemma}
  \label{lem:stability2}
  Let~\textup{\ref{hyp:(g)}} holds.  Fix
  $u_o', u_o'' \in (\L1 \cap \BV) (\reali^+; \reali)$, $b', b''$
  satisfying~\textup{\ref{hyp:(b)}}, $m', m''$
  satisfying~\textup{\ref{hyp:(m)}}, and $f', f''$
  satisfying~\textup{\ref{hyp:(f)}}. Call $u'$ and $u''$ the solutions
  to
  \begin{equation}
    \label{eq:Two}
    \!\!\!\!\!\!\!\!\!
    \left\{
      \begin{array}{@{\,}l@{}}
        \partial_t u
        +
        \partial_x \!\left(g (t,x) \, u\right) + m' (t,x) \, u
        = f' (t,x)
        \\
        u (0, x) = u_o' (x)
        \\
        g (t,0) \, u (t, 0+) = b' (t)
      \end{array}
    \right.
    \mbox{and }
    \left\{
      \begin{array}{@{\,}l@{}}
        \partial_t u
        +
        \partial_x \!\left(g (t,x) \, u\right) + m'' (t,x) \, u
        = f'' (t,x)
        \\
        u (0, x) = u_o'' (x)
        \\
        g (t,0) \, u (t, 0+) = b'' (t) \,.
      \end{array}
    \right.
    \!\!\!\!\!\!\!\!\!
  \end{equation}
  Then,
  \begin{enumerate}[label=\textup{\textbf{(SP.\arabic*)}}, start=8]

  \item \label{it:SP8} The following stability conditions hold:
    \begin{equation}
      \label{eq:stability-linear}
      \begin{split}
        & \norma{u' (t)- u'' (t)}_{\L1 (\reali^+; \reali)}
        \\
        \le & e^{Mt} \norma{u_o' - u''_o}_{\L1 (\reali^+; \reali)} +
        e^{2 (G_1 + M) t} \left( 2 \norma{f' - f''} _{\L1(J \times
            \reali^+; \reali)} + \norma{b' - b''}_{\L1([0, t];\reali)}
        \right)
        \\
        & \quad + e ^{(2G_1 + M) t} \left[\norma{u_o''}_{\L1
            (\reali^+; \reali)} + 2 t F_1 + \norma{b''}_{\L1([0, t];
            \reali)}\right]\, t\, \norma{m' - m''}_{\L\infty([0,t]
          \times \reali^+; \reali)}
      \end{split}
    \end{equation}
    and
    \begin{equation}
      \label{eq:stability-linear-2}
      \begin{split}
        \!\!\!\!\!\!
        & \norma{u' (t)- u'' (t)}_{\L1 (\reali^+; \reali)}
        \\
        \!\!\!\!\!\!
        \le & e^{Mt} \norma{u_o' - u''_o}_{\L1 (\reali^+; \reali)} +
        e^{2 (G_1 + M) t} \left( 2 \norma{f' - f''} _{\L1(J \times
            \reali^+; \reali)} + \norma{b' - b''}_{\L1([0, t];\reali)}
        \right)
        \\
        \!\!\!\!\!\!
        & \quad + e ^{(2G_1 + M) t} \left[\norma{u_o''}_{\L\infty
            (\reali^+; \reali)} + 2 t F_\infty
          + \frac{\norma{b''}_{\L\infty([0, t];
              \reali)}}{\check g}\right]\,
        \norma{m' - m''}_{\L1([0,t]\times\reali^+; \reali)}.
      \end{split}
    \end{equation}

  \item \label{it:SP9} The following monotonicity property holds:
    \begin{equation}
      \label{eq:mono}
      \left.
        \begin{array}{@{}r@{\;}c@{\;}l@{\qquad\forall}c@{\,}c@{\,}l@{}}
          f' (t,x)
          & \leq
          & f'' (t,x)
          & (t,x)
          & \in
          & J \times\reali^+
          \\
          u_o' (x)
          & \leq
          & u_o'' (x)
          & x
          & \in
          & \reali^+
          \\
          b' (t)
          & \leq
          & b'' (t)
          & t
          & \in
          & J
        \end{array}\!
      \right\}
      \Rightarrow
      u' (t,x) \leq  u'' (t,x)
      \quad \forall (t,x) \in J \times \reali^+ \,.
    \end{equation}

  \item \label{it:SP10} If $\bar x > 0$ and $\sigma(t) < \bar x$, then
    \begin{equation}
      \label{eq:vertical-stability}
      \begin{split}
        & \norma{u'(\cdot, \bar x) - u''(\cdot, \bar x)} _{\L1([0,t];
          \reali)}
        \\
        \le & e^{(G_1 + M) t} \, t\, \left[ e^{G_1 t}
          \norma{u'_o}_{\L1\left(\reali^+; \reali\right)} + t^2
          F_\infty\right] \norma{m'- m''} _{\L\infty([0,t] \times
          \reali^+; \reali)}
        \\
        & \quad + e^{Mt} \norma{u'_o - u''_o}_{\L1\left(\reali^+;
            \reali\right)} + e^{\left(2 G_1 + M\right) t} \norma{f' -
          f''}_{\L1\left([0,t] \times \reali^+; \reali\right)}.
      \end{split}
    \end{equation}
    and
    \begin{equation}
      \label{eq:vertical-stability-2}
      \begin{split}
        & \norma{u'(\cdot, \bar x) - u''(\cdot, \bar x)} _{\L1([0,t];
          \reali)}
        \\
        \le & {\frac{e^{(G_1 + M) t}}{G_\infty} \, \left[ e^{G_1 t}
          \norma{u'_o}_{\L\infty\left(\reali^+; \reali\right)} + t
          F_\infty\right]
        \norma{m' - m''} _{\L1([0,t]\times \reali^+; \reali)}}
      \\
        & \quad + e^{Mt} \norma{u'_o - u''_o}_{\L1\left(\reali^+;
            \reali\right)} + e^{\left(2 G_1 + M\right) t} \norma{f' -
          f''}_{\L1\left([0,t] \times \reali^+; \reali\right)}.
      \end{split}
    \end{equation}
  \end{enumerate}
\end{lemma}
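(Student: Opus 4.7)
The plan is to exploit that $u'$ and $u''$ both solve \eqref{eq:3} with the same velocity $g$, so the characteristics $X$, $T$, the boundary curve $\sigma$ and the Jacobians of the associated changes of variables are common to both; only the exponential factor $\mathcal{E}$ (through $m$), the source $f$, the initial datum $u_o$ and the boundary datum $b$ differ. Subtracting the two instances of the representation \eqref{eq:12} decomposes $u' - u''$ into four contributions: an initial-data term, a source term, a boundary term and an exponential term involving $\mathcal{E}' - \mathcal{E}''$. The first three are linear and their $\L1$-norms are estimated by repeating the change-of-variables computations that already underlie \ref{it:S:2}, contributing the factors $e^{Mt}\norma{u_o'-u_o''}_{\L1}$, $e^{2(G_1+M)t}\norma{f'-f''}_{\L1}$ and $e^{2(G_1+M)t}\norma{b'-b''}_{\L1}$ appearing in \eqref{eq:stability-linear} and \eqref{eq:stability-linear-2}.

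The truly nonlinear piece is $\mathcal{E}' - \mathcal{E}''$. From the mean value inequality and~\ref{hyp:(g)},~\ref{hyp:(m)},
\[
\modulo{\mathcal{E}'(\tau,t,x) - \mathcal{E}''(\tau,t,x)}
\leq e^{(G_1+M)(t-\tau)} \int_\tau^t
\modulo{(m'-m'')\!\left(s, X(s;t,x)\right)} \d{s}.
\]
Plugged into the difference of the two formulas \eqref{eq:12} and paired with the $u''$-data, this produces the factors $\norma{u_o''}_{\L1}+2tF_1+\norma{b''}_{\L1}$ in \eqref{eq:stability-linear} once $\norma{m'-m''}_{\L\infty}$ is factored out (the integral in $s$ supplying the extra $t$), or alternatively the $\L\infty$-in-$x$ bound from \ref{it:S:2} in \eqref{eq:stability-linear-2} if we instead keep the absolute value inside, change variable $x \mapsto X(s;t,x)$ (whose Jacobian is controlled via~\ref{hyp:(g)}) and apply Fubini to isolate $\norma{m'-m''}_{\L1([0,t]\times\reali^+;\reali)}$. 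Summing the four pieces and invoking \ref{it:S:2} for $u''$ yields \ref{it:SP8}.

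Item \ref{it:SP9} is immediate: under the sign hypotheses both terms on the right-hand side of \eqref{eq:12} are non-negative, and each of $u_o$, $b$, $f$ enters linearly with a non-negative coefficient (namely $\mathcal{E}>0$, which is positive by its very definition \eqref{eq:11}). For \ref{it:SP10}, monotonicity of $\sigma$ (since $g>0$) and the assumption $\sigma(t)<\bar x$ imply $\sigma(\tau)<\bar x$ for every $\tau\in[0,t]$, so $u'(\tau,\bar x)$ and $u''(\tau,\bar x)$ are both given by the first branch of \eqref{eq:12}. We decompose the difference into the same four contributions, but now integrate in $\tau$ rather than in $x$. The relevant change of variable is $\tau\mapsto X(0;\tau,\bar x)$, a diffeomorphism of $[0,t]$ onto a subinterval of $\reali^+$ whose Jacobian is controlled through~\ref{hyp:(g)}; this converts the $\tau$-integrals of $\modulo{u_o'\circ X - u_o''\circ X}$ and of $\modulo{f'-f''}$ along the characteristic through $(t,\bar x)$ into the global $\L1$-norms appearing in \eqref{eq:vertical-stability} and \eqref{eq:vertical-stability-2}, while the $\mathcal{E}'-\mathcal{E}''$ term is handled exactly as above, giving the two dual variants.

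The main obstacle is not conceptual but accounting: managing the two versions ($\L\infty$ vs $\L1$) of the $m$-estimate and their dual $u''$-norms (respectively $\L1$ and $\L\infty$-in-$x$ controls on $u_o''$, $f''$, $b''$), while keeping the exponential factors in the final inequalities exactly matched to the stated expressions. Once the $\mathcal{E}'-\mathcal{E}''$ contribution is split carefully between its two Hölder-dual forms, the remainder is a direct, if lengthy, combination of the a priori bounds in \ref{it:S:2} with the Fubini and change-of-variable tools already used in the proof of Lemma~\ref{lem:stability}.
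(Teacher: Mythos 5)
Your proposal is correct and follows essentially the same route as the paper: the authors likewise subtract the two instances of the representation formula~\eqref{eq:12}, bound $\modulo{\mathcal{E}'-\mathcal{E}''}$ via the mean value inequality in its $\L\infty$ and $\L1$ dual forms, and use the change of variables along characteristics (with Jacobian controlled by~\ref{hyp:(g)}) to convert the integrals into the stated norms, treating~\ref{it:SP8} and~\ref{it:SP9} as immediate consequences of~\eqref{eq:12} and Lemma~\ref{lem:E} and writing out the four-term decomposition in detail only for~\ref{it:SP10}.
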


\begin{proof}
  The stability bounds~(\ref{eq:stability-linear})
  and~(\ref{eq:stability-linear-2}) can be easily proved
  using the explicit formula~(\ref{eq:12}) and the estimates of
  Lemma~\ref{lem:E}.  Also the monotonicity property~\eqref{eq:mono}
  directly follows from~\eqref{eq:12}.

  We pass to the proof of the estimate~(\ref{eq:vertical-stability}).
  The proof of the estimate~(\ref{eq:vertical-stability-2}) is completely
  analogous.
  Using~(\ref{eq:12}) with the condition $\sigma(t) < \bar x$ we
  deduce that
  \begin{align*}
    \norma{u'(\cdot, \bar x) - u''(\cdot, \bar x)}
    _{\L1([0,t]; \reali)}
    & \le
      \int_0^{t} \modulo{u_o'\left(X\left(0; s, \bar x\right)\right)}
      \modulo{\mathcal E' \left(0, s, \bar x\right)
      - \mathcal E'' \left(0, s, \bar x\right)} \d s
    \\
    & \quad +
      \int_0^{t} \modulo{u_o'\left(X\left(0; s, \bar x\right)\right)
      - u_o''\left(X\left(0; s, \bar x\right)\right)}
      \modulo{\mathcal E'' \left(0, s, \bar x\right)} \d s
    \\
    & \quad +
      \int_0^{t} \int_0^s
      \modulo{f'\left(\tau, X\left(\tau; s, \bar x\right)\right)}
      \modulo{\mathcal E' \left(\tau, s, \bar x\right)
      - \mathcal E'' \left(\tau, s, \bar x\right)} \d \tau\, \d s
    \\
    & \quad +
      \int_0^{t} \!\! \int_0^s \!
      \modulo{f'\left(\tau, X\left(\tau; s, \bar x\right)\right)
      {-} f''\left(\tau, X\left(\tau; s, \bar x\right)\right)}
      \modulo{\mathcal E'' \left(\tau, s, \bar x\right)} \d \tau \d s.
  \end{align*}
  We now estimate all the terms in the previous inequality.
  Using~(\ref{eq:11}), \ref{hyp:(m)} and~\ref{hyp:(g)} we have
  \begin{align*}
    & \int_0^{t} \modulo{u_o'\left(X\left(0; s, \bar x\right)\right)}
      \modulo{\mathcal E' \left(0, s, \bar x\right)
      - \mathcal E'' \left(0, s, \bar x\right)} \d s
    \\
    \le \,
    & e^{(G_1 + M) t} \, t\, \norma{m'- m''}
      _{\L\infty([0,t] \times \reali^+; \reali)}
      \int_0^t \modulo{u_o'\left(X\left(0; s, \bar x\right)\right)}
      \d s
    \\
    \le\,
    &
      e^{\left(2 G_1 + M\right) t}  \, t\, \norma{m'- m''}
      _{\L\infty([0,t] \times \reali^+; \reali)}
      \norma{u_o'}_{\L1(\reali^+; \reali)}.
  \end{align*}
  Using~(\ref{eq:11}), and~\ref{hyp:(m)}, we deduce that
  \begin{equation*}
    \int_0^{t} \modulo{u_o'\left(X\left(0; s, \bar x\right)\right)
      - u_o''\left(X\left(0; s, \bar x\right)\right)}
    \modulo{\mathcal E'' \left(0, s, \bar x\right)} \d s
    \le\,
    e^{Mt} \norma{u'_o - u''_o}_{\L1\left(\reali^+; \reali\right)}.
  \end{equation*}
  Using~(\ref{eq:11}), \ref{hyp:(m)}, \ref{hyp:(g)} and~\ref{hyp:(f)}
  we have
  \begin{align*}
    & \int_0^{t} \int_0^s
      \modulo{f'\left(\tau, X\left(\tau; s, \bar x\right)\right)}
      \modulo{\mathcal E' \left(\tau, s, \bar x\right)
      - \mathcal E'' \left(\tau, s, \bar x\right)} \d \tau \,\d s
    \\
    \le \,
    &
      e^{(G_1 + M) t} \, t\, \norma{m'- m''}
      _{\L\infty([0,t] \times \reali^+; \reali)}
      \int_0^t \int_0^s
      \modulo{f'\left(\tau, X(\tau; s, \bar x)\right)}
      \d \tau\, \d s
    \\
    \le\,
    &
      e^{(G_1 + M) t}  \, t^3\,
      \norma{m'- m''}_{\L\infty([0,t] \times \reali^+; \reali)}
      F_\infty.
  \end{align*}
  Finally using~(\ref{eq:17}) and~\ref{hyp:(g)} we get
  \begin{align*}
    & \int_0^{t} \int_0^s
      \modulo{f'\left(\tau, X\left(\tau; s, \bar x\right)\right)
      - f''\left(\tau, X(\tau; s, \bar x)\right)}
      \modulo{\mathcal E'' \left(\tau, s, \bar x\right)} \d \tau\, \d s
    \\
    \le \,
    &
      e^{(G_1 + M) t}
      \int_0^t \int_0^s
      \modulo{f'\left(\tau, X\left(\tau; s, \bar x\right)\right)
      - f''\left(\tau, X\left(\tau; s, \bar x\right)\right)}
      \d \tau\, \d s
    \\
    \le\,
    &
      e^{(G_1 + M) t}  e^{G_1 t} \int_0^t \int_{\reali^+}
      \modulo{f'\left(\tau, x\right) - f''\left(\tau, x\right)} \d x\,
      \d \tau
    \\
    \le\,
    &
      e^{\left(2 G_1 + M\right) t}
      \norma{f' - f''}_{\L1\left([0,t] \times \reali^+;
      \reali\right)}.
  \end{align*}
  Therefore
  \begin{align*}
    \norma{u'(\cdot, \bar x) - u''(\cdot, \bar x)}
    _{\L1([0,t]; \reali)}
    & \le
      e^{\left(2 G_1 + M\right) t}  \, t\,
      \norma{u_o'}_{\L1(\reali^+; \reali)}
      \norma{m'- m''}
      _{\L\infty([0,t] \times \reali^+; \reali)}
    \\
    & \quad + e^{Mt} \norma{u'_o - u''_o}_{\L1\left(\reali^+; \reali\right)}
    \\
    & \quad + e^{(G_1 + M) t}  \, t^3\, F_\infty \norma{m'- m''}
      _{\L\infty([0,t] \times \reali^+; \reali)}
    \\
    & \quad +
      e^{\left(2 G_1 + M\right) t} \norma{f' - f''}_{\L1\left([0,t] \times \reali^+;
      \reali\right)}
    \\
    & \le e^{(G_1 + M) t} \, t\,
      \left[ e^{G_1 t} \norma{u'_o}_{\L1\left(\reali^+; \reali\right)}
      + t^2 F_\infty\right] \norma{m'- m''}
      _{\L\infty([0,t] \times \reali^+; \reali)}
    \\
    & \quad + e^{Mt} \norma{u'_o - u''_o}_{\L1\left(\reali^+; \reali\right)}
      +
      e^{\left(2 G_1 + M\right) t} \norma{f' - f''}_{\L1\left([0,t] \times \reali^+;
      \reali\right)}
  \end{align*}
  concluding the proof of~(\ref{eq:vertical-stability}).
\end{proof}

\subsection{Proofs Related to Section~\ref{sec:A} -- About the IBVP~\eqref{eq:2}}
\label{sec:ProofsA}

\begin{proofof}{Theorem~\ref{thm:general}}
  Fix $\tmax > 0$, with $\tmax \in \II$, and let $J =
  [0,\tmax]$. Define the constants\marginpar{$[K_1]=n$\\
    $[K_\infty]=\frac{n}{m}$\\}
  \begin{equation}
    \label{eq:constant-C}
    \begin{array}{@{}r@{\;}c@{\;}l@{}}
      K_1
      & >
      & \norma{u_o}_{\L1 (\reali^+; \reali^n)} + B_1
      \\
      K_\infty
      & >
      &
        \max \left\{
        \begin{array}{@{}l@{}}
          \left(1+\frac{n B_L}{\check g}\right) \norma{u_o}_{\L\infty (\reali;\reali^n)} + \frac{B_\infty}{\check g}
          \\
          \left(5 + \frac{G_\infty}{\check g}\right)
          \left(
          \frac{2 B_\infty}{\check g}
          +
          \left(1+\frac{n B_L}{\check g}\right)
          \left(
          \norma{u_o}_{\L\infty (\reali^+; \reali^n)} + \tv (u_o; \reali^+)
          \right)
          \right)
        \end{array}
      \right\}
    \end{array}
  \end{equation}
  and the complete metric space $(\XX^n, \dx)$ where
  \begin{equation}
    \label{eq:space-X}
    \begin{array}{rcl}
      \XX
      & =
      & \left\{
        u \in
        \C0\left(J; \L1 (\reali^+; \reali)\right)
        \colon
        \begin{array}{rcl}
          \displaystyle
          \norma{u}_{\C0(J;\L1(\reali^+; \reali))}
          & \le
          & K_1
          \\
          \displaystyle
          \norma{u}_{\C0(J;\L\infty(\reali^+; \reali))}
          & \le
          & K_\infty
          \\
          \displaystyle
          \sup_{t \in J}
          \tv (u(t, \cdot); \reali^+)
          & \leq
          & K_\infty
        \end{array}
            \right\}
      \\
      \dx (u', u'')
      & =
      & \max\limits_{i\in \{1, \ldots, n\}}
        \norma{u_i'' - u_i'}_{\C0(J;\L1 (\reali^+; \reali))} \,.
    \end{array}
  \end{equation}
  Define the map $\mathcal{T} \colon \XX^n \to \XX^n$, such that, for
  $w = (w_1, \cdots, w_n) \in \XX^n$, $\mathcal T(w) = u$, where
  $u = (u_1, \cdots, u_n)$ solves
  \begin{equation}
    \label{eq:iteratively}
    \left\{
      \begin{array}{l}
        \partial_t u_i + \partial_x (g_i (t,x) \, u_i)
        + m_i (t,x) \, u_i
        =
        f_i (t,x)
        \\
        u_i (0,x) = u_i^o (x)
        \\
        u_i (t,0+) = b_i (t)
      \end{array}
    \right.
    \quad
    i=1, \ldots, n
  \end{equation}
  where
  \begin{equation}
    \label{eq:19}
    \begin{array}{rcl}
      m_i (t,x)
      & =
      & - \left(\alpha_i[w (t)] (x)\right)_i - \left(\gamma_i (t,x)\right)_i
      \\
      f_i (t,x)
      & =
      & \sum\limits_{j\neq i}
        \left(\left(\alpha_i[w (t)] (x)\right)_j + \left(\gamma_i (t,x)\right)_j\right) w_j (t,x)
      \\
      b_i (t)
      & =
      & \beta_i \left(t, u_1 (t, \bar x_1-), \ldots, u_n (t, \bar x_n-)\right) \,.
    \end{array}
  \end{equation}
  Remark that in the last line above an essential role is going to be
  played by the assumption $\partial_{u_j} \beta_i (t,u) = 0$ for all
  $j \geq i$.

  \paragraph{The map $\mathcal{T}$ is well defined.} (i.e.
  $\mathcal T(w) \in \XX^n$ for every $w \in \XX^n$). Aiming at the
  use of Lemma~\ref{lem:stability}, we verify that the
  assumptions~\textup{\ref{hyp:(g)}}, \textup{\ref{hyp:(m)}},
  \textup{\ref{hyp:(f)}}, and~\textup{\ref{hyp:(b)}} therein hold.

  \subparagraph{\textup{\ref{hyp:(g)}} holds.} It is immediate
  by~\ref{hyp:g}.

  \subparagraph{\textup{\ref{hyp:(m)}} holds.} The continuity of
  $x \to m (t,x)$ follows from~\ref{hyp:A_i}
  and~\ref{hyp:gamma_i}. Observe that the map
  $t \to \alpha_i[w (t)] (x)$ is continuous, indeed:
  \begin{flalign*}
    \norma{\alpha_i[w (t_2)] (x) - \alpha_i[w (t_1)] (x)} \leq &
    \norma{\alpha_i[w (t_2)] - \alpha_i[w (t_1)]}_{\C0 (\reali^+;
      \reali^n)}
    \\
    \leq & \norma{\alpha_i[w (t_2) - w (t_1)]}_{\C0 (\reali^+;
      \reali^n)} & \mbox{[By linearity}]
    \\
    \leq & A_L \, \norma{w (t_2) - w (t_1)}_{\L1 (\reali^+; \reali)} &
    \mbox{[By~\eqref{eq:hyp-A1}]}
  \end{flalign*}
  and the fact that $w \in \C0 (J; \L1 (\reali^+; \reali))$ allows to
  conclude.

  Fix $(t_1,x_1), \, (t_2,x_2)$ in $J\times\reali^+$ and compute, for
  $i=1, \ldots, n$,
  \begin{flalign*}
    & \modulo{m_i (t,x_2) - m_i (t,x_1)}
    \\
    \leq & \modulo{\left(\alpha_i[w (t)]\right)_i (x_2) -
      \left(\alpha_i[w (t)]\right)_i (x_2)} + \modulo{\left(\gamma_i
        (t,x_2)\right)_i - \left(\gamma_i (t,x_1)\right)_i} &
    \mbox{[By~\eqref{eq:19}]}
    \\
    \leq & \norma{\alpha_i[w (t)] (x_2) - \alpha_i[w (t)] (x_2)} +
    \norma{\gamma_i (t,x_2) - \gamma_i (t,x_1)}
    \\
    \leq & A_2 \, \modulo{x_2 - x_1} + C_L\, \modulo{x_2 - x_1} \,. &
    \mbox{[By~\eqref{eq:hyp-A4} and~\eqref{eq:15}}
  \end{flalign*}
  Moreover,
  \begin{flalign*}
    \norma{m_i}_{\L\infty (J\times\reali^+;\reali)} \leq &
    \supess_{t\in J} \norma{\left(\alpha_i[w
        (t)]\right)_i}_{\C0(\reali^+; \reali)} +
    \norma{\gamma_i}_{\L\infty (J\times \reali^+; \reali^n)} &
    \mbox{[By~\eqref{eq:19}]}
    \\
    \leq & A_L \, \norma{w}_{\C0 (J;\L1 (\reali^+; \reali^n))} +
    C_\infty & \mbox{[By~\eqref{eq:hyp-A1} and~\eqref{eq:20}]}
    \\
    \leq & A_L \, K_1 + C_\infty & \mbox{[By~\eqref{eq:space-X}]}
  \end{flalign*}
  so that $m_i$ is a Caratheodory function. Finally,
  \begin{flalign*}
    \sup_{t \in J} \tv (m_i (t, \cdot); \reali^+) \leq & \sup_{t \in
      J} \tv (\alpha_i[w (t)]; \reali^+) + \sup_{t \in J} \tv
    (\gamma_i (t, \cdot); \reali^+) & \mbox{[By~\eqref{eq:19}]}
    \\
    \leq & A_L \, \norma{w}_{\C0 (J; \L1 (\reali^+; \reali))} +
    C_\infty & \mbox{[By~\eqref{eq:hyp-A1} and~\eqref{eq:20}]}
    \\
    \leq & A_L \, K_1 + C_\infty & \mbox{[By~\eqref{eq:space-X}]}
  \end{flalign*}
  completing the proof of~\ref{hyp:(m)} with
  \begin{equation}
    \label{eq:34}
    M = A_L \, K_1 + C_\infty\,.
  \end{equation}

  \subparagraph{\textup{\ref{hyp:(f)}} holds.}  Compute the terms in
  the right hand side of
  \begin{flalign*}
    \norma{f_i (t_2) - f_i (t_1)}_{\L1 (\reali^+; \reali)} \leq &
    \sum_{j\neq i} \norma{ \left(\alpha_i[w (t_2)] \right)_j w_j (t_2)
      - \left(\alpha_i[w (t_1)] \right)_j w_j (t_1)}_{\L1 (\reali^+;
      \reali)}
    \\
    & + \sum_{j\neq i} \norma{ \left(\gamma_i (t_2)\right)_j w_j (t_2)
      - \left(\gamma_i (t_1)\right)_j w_j (t_1)}_{\L1 (\reali^+;
      \reali)}
  \end{flalign*}
  separately, obtaining
  \begin{flalign*}
    & \norma{ \left(\alpha_i[w (t_2)] \right)_j w_j (t_2) -
      \left(\alpha_i[w (t_1)] \right)_j w_j (t_1)}_{\L1 (\reali^+;
      \reali)} & \mbox{[Use the linearity of $\alpha_i$]}
    \\
    \leq & \norma{\alpha_i[w (t_2)]}_{\C0 (\reali^+; \reali^n)}
    \norma{w (t_2) - w (t_1)}_{\L1 (\reali^+; \reali^n)} &
    \mbox{[Use~\eqref{eq:hyp-A1}]}
    \\
    & + \norma{\alpha_i[w (t_2) - w (t_1)]}_{\C0 (\reali^+; \reali^n)}
    \norma{w (t_1)}_{\L1 (\reali^+; \reali^n)} &
    \mbox{[Use~\eqref{eq:hyp-A1}]}
    \\
    \leq & A_L \, \norma{w (t_2)}_{\L1 (\reali^+; \reali)} \norma{w
      (t_2) - w (t_1)}_{\L1 (\reali^+; \reali^n)} &
    \mbox{[Use~\eqref{eq:space-X}]}
    \\
    & + A_L \, \norma{w (t_2) - w (t_1)}_{\L1 (\reali^+; \reali^n)} \,
    \norma{w (t_1)}_{\L1 (\reali^+; \reali)} &
    \mbox{[Use~\eqref{eq:space-X}]}
    \\
    \leq & 2\, A_L \, K_1 \, \norma{w (t_2) - w (t_1)}_{\L1 (\reali^+;
      \reali^n)} \,.
  \end{flalign*}
  and similarly
  \begin{flalign*}
    &\norma{ \left(\gamma_i (t_2)\right)_j w_j (t_2) - \left(\gamma_i
        (t_1)\right)_j w_j (t_1)}_{\L1 (\reali^+; \reali)}
    \\
    \leq & \norma{\gamma_i(t_2)}_{\L\infty (\reali^+; \reali^n)}
    \norma{w (t_2) - w (t_1)}_{\L1 (\reali^+; \reali^n)} &
    \mbox{[Use~\eqref{eq:20}]}
    \\
    & + \norma{w (t_1)}_{\L\infty (\reali^+; \reali^n)}
    \norma{\gamma_i(t_2) - \gamma_i (t_1)}_{\L1 (\reali^+; \reali^n)}
    & \mbox{[Use~\eqref{eq:space-X}]}
    \\
    \leq & C_\infty \norma{w (t_2) - w (t_1)}_{\L1 (\reali^+;
      \reali^n)} + K_\infty \norma{\gamma_i(t_2) - \gamma_i
      (t_1)}_{\L1 (\reali^+; \reali^n)}
  \end{flalign*}
  which show that $f_i \in \C0 (J; \L1 (\reali^+; \reali))$,
  by~\ref{hyp:gamma_i} and~\eqref{eq:space-X}.

  We prove now the $\L1$ and $\L\infty$ bounds on $f$:
  \begin{flalign*}
    & \norma{f_i}_{\L\infty(J; \L1 (\reali^+;\reali))}
    \\
    \leq & \sum_{j\neq i} \left( \norma{ \left(\alpha_i[w (t)]
        \right)_j w_j (t)}_{\L1 (\reali^+; \reali)} + \norma{
        \left(\gamma_i (t)\right)_j w_j (t_2)}_{\L1 (\reali^+;
        \reali)} \right) & \mbox{[By~\eqref{eq:19}]}
    \\
    \leq & (A_L \, {K_1}^2 + C_\infty \, K_1)n &
    \mbox{[By~\eqref{eq:hyp-A1}, \eqref{eq:20}
      and~\eqref{eq:space-X}]}
  \end{flalign*}
  proving the $\L\infty$ bound on $f_i$ with
  \begin{equation}
    \label{eq:33}
    F_1 = (A_L \, {K_1}^2 + C_\infty \, K_1)n \,.
  \end{equation}
  The $\L\infty$ bound is proved similarly:
  \begin{flalign*}
    & \norma{f_i}_{\L\infty(J \times \reali^+;\reali))}
    \\
    \leq & \sum_{j\neq i} \left( \norma{ \left(\alpha_i[w] \right)_j
        w_j }_{\L\infty (J\times\reali^+; \reali)} + \norma{
        (\gamma_i)_j w_j}_{\L\infty (J\times\reali^+; \reali)} \right)
    & \mbox{[By~\eqref{eq:19}]}
    \\
    \leq & (A_L \, K_1 \, K_\infty + C_\infty \, K_\infty)n &
    \mbox{[By~\eqref{eq:hyp-A1}, \eqref{eq:20}
      and~\eqref{eq:space-X}]}
  \end{flalign*}
  Moreover,
  \begin{flalign*}
    & \tv (f_i (t, \cdot); \reali^+)
    \\
    \leq & \sum_{j\neq i} \tv\left( \left(\alpha_i[w (t)]
        (\cdot)\right)_j \, w_j (t,\cdot) \right) + \sum_{j\neq i}
    \tv\left( \left(\gamma_i (t,\cdot)\right)_j \, w_j (t, \cdot)
    \right) & \mbox{[By~\eqref{eq:19}}
    \\
    \leq & \sum_{j\neq i} \tv\left( \left(\alpha_i[w (t)]
        (\cdot)\right)_j; \reali^+\right) \norma{w_j (t)}_{\L\infty
      (\reali^+; \reali)} & \mbox{[Use~\eqref{eq:hyp-A1}
      and~\eqref{eq:space-X}]}
    \\
    &+ \sum_{j\neq i} \norma{\left(\alpha_i[w (t)]
        (\cdot)\right)_j}_{\L\infty (\reali^+; \reali)} \tv\left(w_j
      (t,\cdot); \reali^+ \right) & \mbox{[Use~\eqref{eq:hyp-A1}
      and~\eqref{eq:space-X}]}
    \\
    & + \sum_{j\neq i} \tv\left((\gamma_i(t))_j ; \reali^+ \right)
    \norma{w_j}_{\L\infty (J\times\reali^+; \reali)} &
    \mbox{[Use~\eqref{eq:20} and~\eqref{eq:space-X}]}
    \\
    & + \sum_{j\neq i} \norma{(\gamma_j (t)_i)}_{\L\infty (\reali^+;
      \reali)} \tv\left(w_j (t); \reali^+\right) &
    \mbox{[Use~\eqref{eq:20} and~\eqref{eq:space-X}]}
    \\
    \leq & 2 \, n\, A_L \, K_\infty \, K_1 + 2 \, n\, C_\infty
    K_\infty
  \end{flalign*}
  completing the proof that~\ref{hyp:(f)} holds with
  \begin{equation}
    \label{eq:32}
    F_\infty = 2nK_\infty (C_\infty + A_L\, K_1) \,.
  \end{equation}

  \subparagraph{\textup{\ref{hyp:(b)}} holds for $i=1$.} Note that, in
  this case, $\beta_1 (t,u)$ is independent of $u$, so that
  using~\eqref{eq:barB-4},
  $\tv(b_1; J) = \tv (\beta_1 (\cdot); J) \leq B_\infty$.

  \subparagraph{$\mathcal{T}w$ is Lipschitz continuous in time with
    respect to the $\L1$ norm.} Simply apply~\ref{it:S:6}, observing
  that~\ref{hyp:(f)}, \ref{hyp:(g)}, \ref{hyp:(m)} and~\ref{hyp:(b)}
  were proved above exhibiting bounds that hold uniformly on
  $\mathcal{X}^n$, once the norm of the initial datum $u_o$ and the
  constants in~\ref{hyp:g}--\ref{hyp:init-boundary} are fixed.

  \subparagraph{$\left(\mathcal{T} (w)\right)_1 \in \mathcal{X}$.}  By
  Lemma~\ref{lem:stability}, $u_1$ is well defined as solution to the
  Initial Boundary Value Problem~\eqref{eq:iteratively} with $i=1$.
  By~\ref{it:S:2}, \ref{hyp:B_i} and~\eqref{eq:barB-2}, we have that
  \begin{align*}
    \norma{u_1}_{\C0(J;\L1(\reali^+; \reali))}
    & \le
      \left[\norma{u_1^o}_{\L1(\reali^+; \reali)}
      + \frac{1}{\check g} \norma{\beta_1}_{\L1(J; \reali)}
      + (A_1 {K_1}^2 + C_\infty \, K_1) n \, \tmax
      \right]
      e^{(A_L K_1 +C_\infty)\tmax}
    \\
    & \le
      \left[
      \norma{u_1^o}_{\L1(\reali^+; \reali)}
      + \frac{B_1}{\check g}
      + (A_1 {K_1}^2 + C_\infty \, K_1) n \, \tmax
      \right]
      e^{(A_L K_1 +C_\infty)\tmax}
    \\
    & \leq
      K_1 \quad \mbox{ provided $\tmax$ is small, since by~\eqref{eq:constant-C} }
      K_1 > \norma{u_1^o}_{\L1(\reali^+; \reali)}
      + \frac{B_1}{\check g}\,.
  \end{align*}
  To bound the $\L\infty$ norm, Use~\ref{it:S:2} and~\eqref{eq:barB-3}
  to obtain
  \begin{eqnarray*}
    \norma{u_1 (\tau)}_{\L\infty ([0, \tmax] \times \reali^+; \reali)}
    & \leq
    & \left(
      \norma{u_1^o}_{\L\infty (\reali^+; \reali)}
      {+}
      \frac{B_\infty}{\check g}
      {+}
      2 \, n \, K_\infty (C_\infty + A_L\, K_1)\, \tmax
      \right)
      e^{(G_1 + A_L K_1 + C_\infty) \tmax}
    \\
    & <
    & K_\infty
      \quad \mbox{ provided $\tmax$ is small, since }
      K_\infty
      >
      \norma{u_1^o}_{\L\infty (\reali^+; \reali)} + \frac{B_\infty}{\check g} \,.
  \end{eqnarray*}
  Using~\ref{it:S:5}, we have
  $u_1 \in \C0\left(J; \L1(\reali^+; \reali)\right)$.  By~\ref{it:S:3}
  and~\ref{hyp:B_i}, for $t \in J$, we have
  \begin{eqnarray*}
    \tv (u_1(t); \reali^+)
    & \leq
    & \mathcal{H}(t)
      \left( \!
      F_\infty t
      +
      \frac{\norma{\beta_1}_{\L\infty ([0,t]; \reali)}
      +
      \tv (\beta_1; \reali^+)
      }{\check g}
      +
      \norma{u_1^o}_{\L\infty (\reali^+; \reali)}
      +
      \tv (u^o_1; \reali^+)
      \! \right)
    \\
    & \leq
    & \mathcal{H}(\tmax)
      \left(
      2n\, K_\infty (C_\infty+A_L \, K_1)\, \tmax
      +
      \frac{2}{\check g} B_\infty
      +
      \norma{u^o_1}_{\L\infty (\reali^+; \reali)}
      +
      \tv (u^o_1; \reali^+)
      \right)
    \\
    & \leq
    & K_\infty
      \quad \mbox{ provided $\tmax$ is small, since by~\eqref{eq:constant-C} }\\
    &
    & K_\infty
      >
      \left(5+\frac{G_\infty}{\check g}\right)
      \left(\frac{2B_\infty}{\check g} + \norma{u_o}_{\L\infty (\reali^+; \reali^n)} + \tv (u_o; \reali^+)\right) \,,
  \end{eqnarray*}
  completing the proof that $u_1 = (\mathcal{T}w)_1\in \mathcal{X}$.
  Remark for later use that, by~\ref{it:S:4},
  \begin{equation}
    \label{eq:bv-vertical}
    u_1 (\cdot, \bar x_1) \in \BV(J; \reali) \,.
  \end{equation}

  \subparagraph{\textup{\ref{hyp:(b)}} holds for $i>1$.}  Fix now an
  index $i > 1$, assume that $u_1, \ldots, u_{i-1} \in \XX$ and
  consider the Initial Boundary Value
  Problem~\eqref{eq:iteratively}--\eqref{eq:19}.  By~\ref{hyp:B_i},
  the function $\beta_i$ depends only on $t$ and on
  $u_1, \cdots, u_{i-1}$. Moreover, by~\ref{hyp:B_i}
  and~(\ref{eq:bv-vertical}), which hold for every $u_j$ with $j < i$,
  the map
  \begin{equation*}
    t \mapsto b_i (t) =  \beta_i \left(t, u_1(t, \bar x_1), \cdots,
      u_{i - 1}(t, \bar x_{i-1})\right)
  \end{equation*}
  is of bounded variation and hence satisfies~\ref{hyp:(b)}.

  \subparagraph{$\left(\mathcal{T} (w)\right)_i \in \mathcal{X}$ for
    $i > 1$.}  Lemma~\ref{lem:stability} implies that there exists a
  solution $u_i$ to~\eqref{eq:iteratively}.  By~\ref{it:S:2}
  and~\eqref{eq:barB-1}~\eqref{eq:barB-2}, we have that
  \begin{eqnarray*}
    &
    & \norma{u_i (t)}_{\C0(J;\L1(\reali^+; \reali))}
    \\
    & \le
    &  \left[\norma{u_i^o}_{\L1(\reali^+; \reali)}
      + \norma{b_i}_{\L1(J; \reali)}
      + F_1 \, \tmax
      \right]
      e^{M \tmax}
    \\
    & \le
    &  \left[
      \norma{u_i^o}_{\L1(\reali^+; \reali)}
      + B_1 + B_L \sum_{j = 1}^{i-1}
      \int_0^{\tmax}\modulo{u_j(s, \bar x_j)} \d s
      + n (A_L \, {K_1}^2 + C_\infty K_1) \tmax
      \right] e^{(A_L K_1 + C_\infty)\tmax}
    \\
    & \le
    &  \left[
      \norma{u_i^o}_{\L1(\reali^+; \reali)}
      + B_1 + (i-1) B_L \, K_\infty \tmax
      + n (A_L \, {K_1}^2 + C_\infty K_1) \tmax
      \right] e^{(A_L K_1 + C_\infty)\tmax}
    \\
    & \leq
    & K_1 \quad
      \mbox{ provided $\tmax$ is small, since by~\eqref{eq:constant-C} }
      K_1 > \norma{u_i^o}_{\L1(\reali^+; \reali)}
      + B_1 \,.
  \end{eqnarray*}
  Using~\ref{it:S:5},
  $u_i \in \C0\left(J; \L1(\reali^+; \reali)\right)$.  To bound the
  $\L\infty$ norm, Use~\ref{it:S:2}, \eqref{eq:12}, \eqref{eq:barB-1},
  and~\eqref{eq:barB-3} to obtain
  \begin{eqnarray*}
    &
    & \norma{u_i}_{\L\infty (J\times\reali^+; \reali)}
    \\
    & \leq
    & \left(
      \norma{u_i^o}_{\L\infty (\reali^+; \reali)}
      +
      \frac{\norma{b_i}_{\L\infty(J; \reali)}}{\check g}
      +
      F_\infty \, \tmax
      \right)
      \,  e^{(G_1 + M) \tmax}
    \\
    & \le
    &  \left(\norma{u_i^o}_{\L\infty (\reali^+; \reali)}
      + \frac{B_\infty}{\check g}
      + \frac{B_L}{\check g}
      \sum_{j=1}^{i-1}\modulo{u_j (t, \bar x_j)}
      +
      2\, n \, K_\infty (C_\infty+A_L \, K_1) \tmax
      \right) e^{(G_1 + A_L K_1+C_\infty) \tmax}
    \\
    & \le
    &  \biggl(\left(1 + \frac{n B_L}{\check g}
      e^{(G_1 + A_L K_1+C_\infty) \tmax} \right)
      \norma{u_o}_{\L\infty(\reali^+; \reali^n)}
      \biggr.
    \\
    &
    &\qquad
      \biggl.
      + \frac{B_\infty}{\check g}
      + \frac{2 n B_LK_\infty}{\check g}
      (C_\infty + A_L \, K_1) \tmax e^{(G_1+A_L K_1+C_\infty) \tmax}
      +       2\, n \, K_\infty (C_\infty+A_L \, K_1) \tmax
      \biggr)
    \\
    & &\quad
        \times e^{(G_1 + A_L K_1+C_\infty) \tmax}
    \\
    & <
    & K_\infty \quad
      \mbox{provided $\tmax$ is small, since by~\eqref{eq:constant-C} }
      K_\infty > \left(1+\frac{n B_L}{\check g}\right) \norma{u_o}_{\L\infty (\reali;\reali^n)} + \frac{B_\infty}{\check g} \,.
  \end{eqnarray*}

  We pass now to estimate the total variation. By~\ref{it:S:3}
  and~\ref{hyp:B_i}, for $t \in J$, we have
  \begin{eqnarray*}
    &
    & \tv (u_i(t); \reali^+)
    \\
    & \leq
    & \mathcal{H}(t)
      \left(
      F_\infty t
      +
      \frac{\norma{b_i}_{\L\infty ([0,t]; \reali)}
      +
      \tv (b_i; J)
      }{\check g}
      +
      \norma{u_i^o}_{\L\infty (\reali^+; \reali)}
      +
      \tv (u^o_i; \reali^+)
      \right)
    \\
    & \leq
    & \mathcal{H}(\tmax) \!\!
      \left( \!\!
      2 n K_\infty (C_\infty {+} A_L \, K_1) \tmax
      {+}
      \frac{1}{\check g} \!
      \left(\!
      2B_\infty
      {+}
      B_L
      \sum_{j=1}^{i-1}
      \left(
      \norma{u_j (\cdot, \bar x_j)}_{\L\infty (J; \reali)}
      {+}
      \tv (u_j (\cdot, \bar x_j);J)
      \!\right)
      \!\right)
      \right.
    \\
    &
    & \qquad\quad
      +
      \left.
      \norma{u_i^o}_{\L\infty (\reali^+; \reali)}
      +
      \tv (u^o_i; \reali^+)^{\phantom{\Big|}}
      \right)
      \qquad\qquad\qquad\qquad\;\,\mbox{[By~\eqref{eq:barB-1}, \eqref{eq:barB-3} and~\eqref{eq:barB-4}]}
    \\
    & \leq
    & \mathcal{H}(\tmax)
      \left(
      2 \, n \, K_\infty (C_\infty + A_L \, K_1)\, \tmax
      +
      \frac{2B_\infty}{\check g}
      \right.
      \qquad\qquad\qquad\qquad
      \mbox{[Use~\eqref{eq:12}, \eqref{eq:17}, \ref{hyp:(f)}, \eqref{eq:tv-E-vertical}]}
    \\
    &
    & \qquad\quad
      \left.
      +
      \frac{n \, B_L}{\check g}
      \left(
      \norma{u^o}_{\L\infty(\reali^+; \reali^n)}
      +
      2 \, n \, K_\infty (C_\infty + A_L \, K_1) \tmax
      \right)
      e^{(G_1 + A_L K_1 + C_\infty)\tmax}
      \right.
    \\
    &
    &\qquad\quad
      +
      \frac{n \, B_L}{\check g}
      \left[
      \tv (u_o;\reali^+)
      +
      2 (G_1 + A_L \, K_1 + C_\infty) \norma{u_o}_{\L\infty(\reali^+; \reali^n)} \tmax\right]
      e^{(G_1 + A_L \, K_1 + C_\infty)\, \tmax}
    \\
    &
    & \qquad\quad
      +
      \frac{8 \, n^2 \, B_L}{\check g}
      [ 1 + (G_1 + A_L \, K_1 + C_\infty) \tmax ] \,
      K_\infty (A_L \, K_1 + C_\infty)  \, \tmax \,
      e^{(G_1 + A_L K_1 + C_\infty)\, \tmax}
    \\
    &
    & \qquad\quad+
      \left.
      \norma{u_o}_{\L\infty (\reali^+; \reali^n)}
      +
      \tv (u_o; \reali^+)^{\phantom{|}}
      \right)
    \\
    & <
    & K_\infty \qquad
      \mbox{provided $\tmax$ is small, since by~\eqref{eq:constant-C} }\\
    &
    &  K_\infty > \left(5 + \frac{G_\infty}{\check g}\right)
      \left(
      \frac{2 B_\infty}{\check g}
      +
      \frac{n B_L}{\check g}
      \left(
      \norma{u_o}_{\L\infty (\reali^+; \reali^n)} + \tv (u_o; \reali^+)
      \right)
      \right) \,.
  \end{eqnarray*}

  This concludes the proof of the well posedness of $\mathcal T$.

  \paragraph{The map $\mathcal{T}$ is a contraction.}

  Fix $w, \bar w \in \XX$. For later use, we prepare some estimates.
  By~\ref{hyp:A_i} and~(\ref{eq:space-X}), for every
  $i \in \left\{1, \cdots, n\right\}$, we deduce that
  \begin{equation}
    \label{eq:alpha-i_infty}
    \norma{
      (\alpha_i[w])_i - (\alpha_i[\bar w])_i
    }_{\L\infty(J \times \reali^+; \reali)}
    \le A_L d_{\XX^n} (w, \bar w) \,.
  \end{equation}
  Moreover, by~\ref{hyp:A_i} and~(\ref{eq:space-X}), for every
  $i \in \{1, \cdots, n\}$ and
  $j \in \{1, \cdots, n\} \setminus \{i\}$, we obtain
  \begin{equation}
    \label{eq:alpha-i_1}
    \begin{split}
      & \norma{(\alpha_i[w])_j \, w_j - (\alpha_i[\bar w])_j \, \bar
        w_j} _{\L1(J \times \reali^+;\reali)}
      \\
      \le & \norma{(\alpha_i[w])_j \, w_j - (\alpha_i[\bar w])_j \,
        w_j} _{\L1(J \times \reali^+;\reali)} + \norma{(\alpha_i[\bar
        w])_j (w_j - \bar w_j)} _{\L1(J \times \reali^+;\reali)}
      \\
      \le & \norma{(\alpha_i [w - \bar w])_j}_{\L\infty(J \times
        \reali^+; \reali)} \, d_{\XX^n} (w, 0) \, \tmax +
      \norma{(\alpha_i[\bar w])_j}_{\L\infty(J \times \reali^+;
        \reali)} \, \tmax\, d_{\XX^n} (w, \bar w)
      \\
      \le & 2 \, A_L \, K_1 \, \tmax\; d_{\XX^n} (w, \bar w) \,.
    \end{split}
  \end{equation}
  Finally by~\ref{hyp:gamma_i}, for every $i \in \{1, \cdots, n\}$ and
  $j \in \{1, \cdots, n\} \setminus \{i\}$, we also have
  \begin{equation}
    \label{eq:gamma-i_1}
    \norma{
      (\gamma_i)_j \, w_j - (\gamma_i)_j \, \bar w_j }_{\L1(J
      \times \reali^+;\reali)}
    \le
    C_\infty \, \tmax \, d_{\mathcal{X}^n} (w, \bar w) \,.
  \end{equation}

  For $i = 1$ and $t \in J$, Lemma~\ref{lem:stability2} implies that
  \begin{flalign*}
    & \norma{(\mathcal{T} w)_1(t, \cdot) - (\mathcal{T} \bar w)_1(t,
      \cdot)} _{\L1(\reali^+; \reali)} & \!\!\!
    \mbox{[Use~\eqref{eq:stability-linear} and~\eqref{eq:19}]}
    \\
    \le & 2 e^{2 (G_1 + A_L K_1 + C_\infty) \tmax}\,
    \sum_{j=2}^n\norma{ (\alpha_1[w])_j \, w_j - (\alpha_1[\bar w])_j
      \, \bar w_j }_{\L1(J \times \reali^+;\reali)}
    \\
    & + 2 e^{2 (G_1 + A_L K_1 + C_\infty) \tmax}\, \sum_{j=2}^n\norma{
      (\gamma_1)_j \, w_j - (\gamma_1)_j \, \bar w_j }_{\L1(J \times
      \reali^+;\reali)}
    \\
    & + e^{(2G_1 + A_L K_1 + C_\infty) \tmax} \left[
      \norma{u^o_1}_{\L1(\reali^+; \reali)} + 2 \, n K_1 (A_L \, K_1 +
      C_\infty) \tmax + B_1\right]
    \\
    & \qquad \times \tmax\, \norma{(\alpha_1[w])_1 - (\alpha_1[\bar
      w])_1}_{\L\infty(J \times \reali^+; \reali)}.
  \end{flalign*}
  Therefore, using~(\ref{eq:alpha-i_infty}), (\ref{eq:alpha-i_1}),
  and~(\ref{eq:gamma-i_1}), we get that
  \begin{flalign*}
    & \norma{(\mathcal{T} w)_1(t, \cdot) - (\mathcal{T} \bar w)_1(t,
      \cdot)} _{\L1(\reali^+; \reali)}
    \\
    \le & 4 \, n \, e^{2 (G_1 + A_L K_1 + C_\infty) \tmax}\, A_L \,
    K_1 \, \tmax\, d_{\XX^n} (w, \bar w) + 2 \, n \, e^{2 (G_1 + A_L
      K_1 + C_\infty) \tmax}\, C_\infty \, \tmax \, d_{\mathcal{X}^n}
    (w, \bar w)
    \\
    & + e^{(2G_1 + A_L K_1 + C_\infty) \tmax} \left[
      \norma{u^o_1}_{\L1(\reali^+; \reali)} + 2 \, n K_1 (A_L \, K_1 +
      C_\infty) \tmax + B_1\right] A_L \, d_{\mathcal{X}^n} (w, \bar
    w)
  \end{flalign*}
  and so, choosing $\tmax$ sufficiently small, we obtain that
  \begin{equation}
    \label{eq:contraction-1}
    \norma{(\mathcal{T} w)_1 - (\mathcal{T} \bar w)_1}_{\C0(J;\L1(\reali^+; \reali))}
    \le \frac{1}{2n} d_{\XX^n} (w, \bar w).
  \end{equation}

  For $i > 1$ and $t \in J$, Lemma~\ref{lem:stability2} implies that
  \begin{align*}
    & \norma{(\mathcal{T} w)_i(t, \cdot) - (\mathcal{T} \bar w)_i(t, \cdot)}
      _{\L1(\reali^+; \reali)}
    \\
    \le
    & 2 e^{2 (G_1 + A_L K_1 + C_\infty) \tmax}\,
      \sum_{j =1, j \ne i}^n\norma{ (\alpha_i[w])_j \, w_j - (\alpha_i[\bar w])_j
      \, \bar w_j }_{\L1(J \times \reali^+;\reali)}
    \\
    & + 2 e^{2 (G_1 + A_L K_1 + C_\infty) \tmax}\, \sum_{j=2, j\ne i}^n\norma{
      (\gamma_i)_j \, w_j - (\gamma_i)_j \, \bar w_j }_{\L1(J
      \times \reali^+;\reali)}
    \\
    & + e^{2 (G_1 + A_L K_1 + C_\infty) \tmax}
      \norma{b_i - \bar b_i}_{\L1(J; \reali)}
    \\
    & +
      e^{(2G_1 + A_L K_1 + C_\infty) \tmax}
      \left[\norma{u^o_1}_{\L1(\reali^+; \reali)}
      + 2 n K_1 (A_L K_1 + C_\infty) \tmax + \norma{\bar b_i}
      _{\L1(J; \reali)}\right]
    \\
    & \quad \times
      \tmax\,
      \norma{(\alpha_i[w])_i -
      (\alpha_i[\bar w])_i}_{\L\infty(J \times \reali^+; \reali)}
  \end{align*}
  where
  \begin{equation}
    \label{eq:bar-b_i}
    \!\!\!
    b_i(t) = \beta_i\left(t, u_1(t, \bar x_1),
      \cdots, u_{i-1}(t, \bar x_{i-1})\right)
    \ \mbox{ and }\
    \bar b_i(t) = \beta_i\left(t, \bar u_1(t, \bar x_1),
      \cdots, \bar u_{i-1}(t, \bar x_{i-1})\right)
  \end{equation}
  are the boundary terms respectively for $w$ and $\bar w$.
  We thus have that
  \begin{flalign*}
    & \norma{b_i - \bar b_i} _{\L1(J; \reali)}
    \\
    \le & B_L \sum_{j = 1}^{i-1} \norma{u_j(\cdot, \bar x_j) - \bar
      u_j (\cdot, \bar x_j)}_{\L1(J; \reali)} &
    \mbox{[By~(\ref{eq:barB-1}), (\ref{eq:bar-b_i})]}
    \\
    \le & B_L \sum_{j=1}^{i-1} e^{(G_1 + A_L K_1 + C_\infty) \tmax}
    \tmax\! \left[ \!e^{G_1 \tmax} \norma{u_j^o}_{\L1\left(\reali^+;
          \reali\right)} {+} \tmax\,^2 2 n K_\infty (A_L K_1 +
      C_\infty)\!\right] &
    \mbox{[By~(\ref{eq:vertical-stability}), (\ref{eq:19})]}
    \\
    & \quad \times \norma{\left(\alpha_j[w]\right)_j -
      \left(\alpha_j[\bar w]\right)_j} _{\L\infty(J \times \reali^+;
      \reali)}
    \\
    & + e^{\left(2 G_1 + A_L K_1 + C_\infty\right) \tmax} \sum_{h=1, h
      \ne j}^{n}\norma{\left(\alpha_j[w]\right)_h w_h -
      \left(\alpha_j[\bar w]\right)_h \bar w_h}_{\L1(J \times
      \reali^+; \reali)}
    \\
    & + e^{\left(2 G_1 + A_L K_1 + C_\infty\right) \tmax} \sum_{h=1, h
      \ne j}^{n}\norma{(\gamma_j)_h \,w_h - (\gamma_j)_h \, \bar
      w_h}_{\L1(J \times \reali^+;\reali)}
    \\
    \le & B_L n e^{(G_1 + A_L K_1 + C_\infty) \tmax} \tmax \! \left[
      e^{G_1 \tmax} \norma{u_o}_{\L1(\reali^+; \reali^n)} + {\tmax}^2
      2 n K_\infty (A_L K_1 + C_\infty)\!\right]
    \\
    & \qquad \times A_L d_{\XX^n}(w, \bar w) &
    \mbox{[By~(\ref{eq:alpha-i_infty})]}
    \\
    & + e^{(2 G_1 + A_L K_1 + C_\infty) \tmax} 2 nA_L K_1 \tmax
    d_{\XX^n}(w, \bar w) & \mbox{[By~(\ref{eq:alpha-i_1})]}
    \\
    & + e^{(2 G_1 + A_L K_1 + C_\infty) \tmax} 2 n C_\infty \tmax
    d_{\XX^n}(w, \bar w).  & \mbox{[By~(\ref{eq:gamma-i_1})]}
  \end{flalign*}
  Moreover
  \begin{flalign*}
    & \norma{\bar b_i} _{\L1(J; \reali)}
    \\
    \le & B_L \sum_{j = 1}^{i-1} \norma{u_j(\cdot, \bar x_j) }_{\L1(J;
      \reali)} + B_1 & \mbox{[By~\ref{eq:barB-1}, ~\ref{eq:barB-2}
      and~(\ref{eq:bar-b_i})]} \\ \le & n B_L e^{(2 G_1 + A_L K_1 +
      C_\infty) \tmax} \, \tmax \, \norma{u_o}_{\L\infty(\reali^+;
      \reali^n)} & \mbox{[By~\ref{eq:12}, ~(\ref{eq:17}),
      and~\ref{hyp:(g)}]}
    \\
    & + 2 n^2 B_L e^{(G_1 + A_L K_1 + C_\infty) \tmax} K_\infty(A_L
    K_1 + C_\infty) \, \tmax\,^2.  & \mbox{[By~\ref{eq:12},
      ~(\ref{eq:17}), and~\ref{hyp:(f)}]}
  \end{flalign*}

  Finally, using again~(\ref{eq:alpha-i_infty}), (\ref{eq:alpha-i_1}),
  and~(\ref{eq:gamma-i_1}), we obtain
  \begin{align*}
    & \norma{(\mathcal{T}w)_i(t, \cdot) - (\mathcal{T} \bar w)_i(t, \cdot)}
      _{\L1(\reali^+; \reali)}
    \\
    \le
    & 4 \, n \, A_L \, K_1 \; \tmax \, d_{\XX^n} (w, \bar w)
      \; e^{2 (G_1 + A_L K_1 + C_\infty) \tmax}
    \\
    & + 2 \, n \, C_\infty \; \tmax \, d_{\XX^n} (w, \bar w) \,
      e^{2 (G_1 + A_L K_1 + C_\infty) \tmax}
    \\
    & + n \, A_L \, B_L \, \left[e^{G_1 \tmax} \norma{u_o}
      _{\L1(\reali^+; \reali^n)}
      + \tmax\, ^2 2n K_\infty (A_L K_1 + C_\infty)\right]
      \; \tmax \, d_{\XX^n}(w, \bar w) \;
      e^{3 (G_1 + A_L K_1 + C_\infty) \tmax}
    \\
    & + 2 \, n \, A_L \, K_1
      \; \tmax \, d_{\XX^n} (w, \bar w) \; e^{4 (G_1 + A_L K_1 + C_\infty) \tmax}
    \\
    & + 2 \, n \, C_\infty
      \, \tmax \, d_{\XX^n} (w, \bar w) \; e^{4 (G_1 + A_L K_1 + C_\infty) \tmax}
    \\
    & + A_L
      \left[\norma{u^o_1}_{\L1(\reali^+; \reali)}
      + 2 n K_1 (A_L K_1 + C_\infty) \tmax + \norma{\bar b_i}
      _{\L1(J; \reali)}\right] \;
      \tmax\, d_{\XX^n} (w, \bar w) \;
      e^{(2G_1 + A_L K_1 + C_\infty) \tmax}.
  \end{align*}
  Choosing $\tmax$ sufficiently small, we obtain
  $\norma{(\mathcal{T} w)_i - (\mathcal{T}\bar
    w)_i}_{\C0(J;\L1(\reali^+; \reali))} \le \frac{1}{2n} \; d_{\XX^n}
  (w, \bar w)$.  Together with~(\ref{eq:contraction-1}), this implies
  that
  $d_{\XX^n} (\mathcal{T} w, \mathcal{T} \bar w ) \le \frac{1}{2} \,
  d_{\XX^n} (w, \bar w)$, hence $\mathcal T$ is a contraction.

  \paragraph{Existence, Uniqueness and Lipschitz Continuity in Time.}
  On the basis of Definition~\ref{def:scalar}, a map $u$ is a solution
  to~(\ref{eq:2}) on $[0, \tmax]$ if and only if it is a fixed point
  of ${\cal T}$, whence we have existence and uniqueness of the
  solution on the time interval $[0, \tmax]$. The Lipschitz continuity
  of $u_*$ in time follows from~\ref{it:S:5}.

  \paragraph{Dependence on the Boundary and Initial Data.}
  Call $u'$, respectively $u''$, the solution corresponding to the
  boundary datum $\beta'$, respectively $\beta''$, and to the initial
  datum $u_o'$, respectively $u_o''$. In the following, the constants
  $K_1$ and $K_\infty$ satisfy~(\ref{eq:constant-C}) for both
  $u'_o$ and $u''_o$.
  Fix $i \in \left\{1, \cdots, n\right\}$ and $t \in [0, t_*]$.
  Estimate~(\ref{eq:stability-linear-2}) implies that
  \begin{equation}
    \label{eq:estimate-stability-L1-1}
    \begin{split}
      & \norma{u'_i(t) - u''_i(t)}_{\L1\left(\reali^+; \reali\right)}
      \le e^{Mt} \norma{u'_{o,i}(t) - u''_{o,i}(t)}
      _{\L1\left(\reali^+; \reali\right)}
      \\
      & \quad + 2 e^{2\left(G_1 + M\right) t} \int_0^t \int_{\reali^+}
      \sum_{j=1, j\ne i}^n \modulo{\left(\alpha_i [u'(s)](x)\right)_j
        u'_j(s, x) -
        \left(\alpha_i[u''(s)](x)\right)_j u''_j(s,x)} \d s \d x
      \\
      & \quad + 2 e^{2\left(G_1 + M\right) t} \int_0^t \int_{\reali^+}
      \sum_{j=1, j\ne i}^n \modulo{\left(\gamma_i(s,x)\right)_j}
      \modulo{ u'_j(s, x) - u''_j(s,x)} \d s \d x
      \\
      & \quad + e^{2\left(G_1 + M\right) t} \int_0^t
      \modulo{\beta'_i\left(s, \cdots, u'_{i-1}\left(s, \bar
            x_{i-1}-\right) \right) - \beta''_i\left(s, \cdots,
          u''_{i-1}\left(s, \bar x_{i-1}- \right)\right)} \d s
      \\
      & \quad + e^{2\left(G_1 + M\right) t} \left[ \norma{u''_{o,
            i}}_{\L\infty\left(\reali^+; \reali\right)} + 2t F_\infty
        + \frac{1}{\check g}
        \norma{\beta''_i\left(\cdot, \cdots, u''_{i-1} \left(\cdot,
              \bar x_{i-1}-\right) \right)}_{\L\infty\left([0, t];
            \reali\right)} \right] \times
      \\
      & \qquad \times \int_0^t \norma{\left(\alpha_i[u'(s)]\right)_i -
        \left(\alpha_i[u''(s)]\right)_i }_{\L1\left(\reali^+; \reali\right)} \d
      s\,.
    \end{split}
  \end{equation}
  We need to estimate every term in the right hand side
  of~(\ref{eq:estimate-stability-L1-1}). Preliminary,
  using~\ref{hyp:B_i}, (\ref{eq:barB-1}) and~(\ref{eq:vertical-stability-2}),
  we deduce, for $t \in [0, t_*]$, that
  \begin{equation}
    \label{eq:stability-est-beta}
    \begin{split}
      & \int_0^t \modulo{\beta'_i\left(s, \cdots, u'_{i-1}\left(s,
            \bar x_{i-1}-\right) \right) - \beta''_i\left(s, \cdots,
          u''_{i-1}\left(s, \bar x_{i-1}- \right)\right)} \d s
      \\
      \le & B_L \sum_{j=1}^{i-1} \norma{u'_j\left(\cdot, \bar x_j
          -\right) - u''_j\left(\cdot, \bar x_j
          -\right)}_{\L1\left((0,t); \reali\right)}
      +
      \norma{\beta'_i - \beta''_i}_{\L\infty ([0,t] \times [0, K_\infty]^{i-1}; \reali)} \, t
      \\
      \le & B_L\frac{e^{\left(G_1 + M\right) t}}{G_\infty}
      \sum_{j=1}^{i-1} \left[e^{G_1 t}
        \norma{u'_{o,j}}_{\L\infty\left(\reali^+; \reali\right)} + t
        F_\infty\right] \times
      \\
      & \quad \times \int_0^t \int_0^{+\infty}
      \modulo{\left(\alpha_j[u'(s)](x)\right)_j -
        \left(\alpha_j[u''(s)](x)\right)_j} \d x \d s
      \\
      & + B_L e^{Mt} \norma{u'_o - u''_o}_{\L1\left(\reali^+;
          \reali^n\right)} + B_L e^{\left(2 G_1 + M\right) t} \times
      \\
      & \quad \times \sum_{j=1}^i \sum_{h=1, h \ne j}^n \int_0^t
      \int_0^{+\infty} \modulo{\left(\alpha_j[u'(s)](x)\right)_h
        u'_h(s,x) - \left(\alpha_j[u''(s)](x)\right)_h u''_h(s,x)} \d
      x \d s
      \\
      & + B_L e^{\left(2 G_1 + M\right) t} \sum_{j=1}^i \sum_{h=1, h
        \ne j}^n \int_0^t \int_0^{+\infty} \modulo{\left(\gamma_j(s,
          x)\right)_h} \modulo{u'_h(s,x) - u''_h(s,x)} \d x \d s
      \\
      & +
      \norma{\beta'_i - \beta''_i}_{\L\infty ([0,t] \times [0, K_\infty]^{i-1}; \reali)} \, t.
    \end{split}
  \end{equation}
  For $j, h \in \left\{1, \cdots, n\right\}$, $j \ne h$, and $t \in [0, t_*]$,
  using~\ref{hyp:A_i}, \eqref{eq:hyp-A1}, \eqref{eq:hyp-A2},
  and~(\ref{eq:constant-C}), we have
  \begin{equation}
    \label{eq:estimate:alpha-u}
    \begin{split}
      & \int_0^t \int_{\reali^+} \modulo{\left(\alpha_j
          [u'(s)](x)\right)_h u'_h(s, x) -
        \left(\alpha_j[u''(s)](x)\right)_h u''_h(s,x)} \d x \d s
      \\
      \le
      & \int_0^t \int_{\reali^+} \modulo{\left(\alpha_j
          [u'(s)](x)\right)_h \left(u'_h(s, x) - u''_h(s,x)\right)} \d
      x \d s
      \\
      & + \int_0^t \int_{\reali^+} \modulo{\left(\alpha_j [u'(s) -
          u''(s)](x)\right)_h u''_h(s, x)} \d x \d s
      \\
      \le
      & A_L K_1 \int_0^t \norma{u'_h(s) - u''_h(s)}
      _{\L1\left(\reali^+; \reali\right)} \d s + A_1 K_\infty \int_0^t
      \norma{u'(s) - u''(s)} _{\L1\left(\reali^+; \reali^n\right)} \d
      s.
    \end{split}
  \end{equation}
  Moreover, for $j, h \in \left\{1, \cdots, n\right\}$, $j \ne h$,
  and $t \in [0, t_*]$,
  using~\ref{hyp:gamma_i}, we get
  \begin{equation}
    \label{eq:estimate:gamma}
    \int_0^t \int_{\reali^+}
    \modulo{\left(\gamma_j(s,x)\right)_h}
    \modulo{ u'_h(s, x) - u''_h(s,x)} \d x \d s
    \le C_\infty \int_0^t
    \norma{u'_h(s) - u''_h(s)}_{\L1\left(\reali^+; \reali\right)} \d s.
  \end{equation}
  Finally, for $j \in \left\{1, \cdots, n\right\}$ and $t \in [0, t_*]$,
  using~\ref{hyp:A_i} and~\eqref{eq:hyp-A2},
  we have that
  \begin{equation}
    \label{eq:estimate:alpha-i2}
    \begin{split}
      \int_0^t \norma{\left(\alpha_j[u'(s)]\right)_j -
        \left(\alpha_j[u''(s)]\right)_j }_{\L1\left(\reali^+; \reali\right)} \d
      s
      & = \int_0^t \int_0^{+\infty} \modulo{\left(\alpha_j[u'(s) - u''(s)]
        \right)_j (x)} \d x \d s
      \\
      & \le A_1 \int_0^t \norma{u'(s) - u''(s)}_{\L1\left(\reali^+; \reali^n\right)}
      \d s.
    \end{split}
  \end{equation}
  Inserting~(\ref{eq:stability-est-beta}), (\ref{eq:estimate:alpha-u}),
  (\ref{eq:estimate:gamma}), and~(\ref{eq:estimate:alpha-i2})
  into~(\ref{eq:estimate-stability-L1-1}) we obtain that, for $t \in [0, t_*]$,
  \begin{align*}
    \norma{u'(t) - u''(t)}_{\L1\left(\reali^+; \reali^n\right)}
    & \le \mathcal H_1(t) \int_0^t \norma{u'(s) - u''(s)}
      _{\L1\left(\reali^+; \reali^n\right)}
      \d s
    \\
    & \quad + \mathcal H_2(t) \norma{u'_{o}(t) - u''_{o}(t)}
      _{\L1\left(\reali^+; \reali^n\right)}
    \\
    & \quad +
      e^{2 (G_1+M)t}
      \norma{\beta' - \beta''}_{\L\infty ([0,t] \times [0, K_\infty]^n; \reali^n)}
      \, t,
  \end{align*}
    where
    \begin{align*}
      \mathcal H_1(t)
      & = n e^{2 \left(G_1 + M\right) t} \left[2 A_L K_1 + 2n A_1 K_\infty
        + 2 C_\infty \right]
      \\
      & \quad + n^2 e^{3 \left(G_1 + M\right) t} \frac{B_L A_1}{G_\infty}
        \left[e^{G_1 t} \norma{u'_o}_{\L\infty\left(\reali^+; \reali^n\right)}
        + t F_\infty\right]
      \\
      & \quad + n^2 e^{\left(4 G_1 + 3 M\right) t} B_L
        \left[A_L K_1 + n A_1 K_\infty + C_\infty\right]
      \\
      & \quad + n^2 e^{2 \left(G_1 + M\right) t} A_1
        \left[\norma{u''_o}_{\L\infty\left(\reali^+; \reali^n\right)}
        + 2 n t F_\infty + \frac{n}{\check g} \left(B_\infty + n B_L K_\infty
        \right)\right] \,;
      \\
      \mathcal H_2(t)
      & = n e^{Mt} \left(1 + B_L e^{2 \left(G_1 + M\right) t}\right)\,.
    \end{align*}
    An application of Gronwall Lemma yields~\eqref{eq:41}.
  \end{proofof}

\begin{proofof}{Corollary~\ref{cor:posGlog}}
  We proceed with the same notation used in the proof of
  Theorem~\ref{thm:general}, $u$ being the solution to~\eqref{eq:2} on
  $J$.

  The positivity of each $u_i$ directly follows
  from~(\ref{eq:12})--(\ref{eq:11}).

  Assume, by contradiction, that there exists a maximal time of
  existence $\bar t$ for the solution $u$ to~\eqref{eq:2}.  A direct
  consequence of~\textbf{(NEG)} and~\textbf{(EQ)} is that
  \begin{displaymath}
    \partial_t \left(\sum_{i=1}^n u_i\right)
    +
    \partial_x \left(
      g (t,x) \sum_{i=1}^n u_i
    \right)
    =
    \left(\sum_{i=1}^n \left(
        \alpha_i[u (t)] + \gamma_i (t,x)
      \right)\right) \cdot u
    \leq 0 \,.
  \end{displaymath}
  Therefore, \ref{it:S:2} and~\ref{it:S:3} ensure that
  $\norma{\sum_{i=1}^n u_i (t)}_{\L1 (\reali^+; \reali)}$,
  $\norma{\sum_{i=1}^n u_i (t)}_{\L\infty (\reali^+; \reali)}$ and
  $\tv\left(\sum_{i=1}^n u_i (t); \reali^+\right)$ are uniformly
  bounded on $[0, \bar t[$. The uniform continuity of $u$ in time
  ensures that $u$ can be defined also at time $\bar t$. A further
  application of Theorem~\ref{thm:general} allows to uniquely extend $u$
  beyond time $\bar t$, yielding the contradiction.

  The functional Lipschitz continuous dependence of $u_*$ on the
  initial datum $u_o$ and on the boundary inflow $\beta$ now follows
  from~\ref{it:SP8} and~\ref{it:SP10}, possibly iterating the
  estimate~\eqref{eq:vertical-stability} to comply with the condition
  $\gamma (t) < \bar x$ therein.
\end{proofof}

\subsection{Proofs Related to Section~\ref{sec:CSIR} -- About the Models~\eqref{eq:13}--\eqref{eq:1} and~\eqref{eq:13}--\eqref{eq:14}}
\label{sec:ProofsCSIR}

\begin{proofof}{Theorem~\ref{thm:wp}}
  The present proof consists in showing that with the present
  assumptions, Theorem~\ref{thm:general} and
  Corollary~\ref{cor:posGlog} can be applied
  to~\eqref{eq:13}--\eqref{eq:1}--\eqref{eq:IC+BDY}. To this aim, set
  for notational simplicity $\bar a_0 = 0$, $\bar a_{N+1} = +\infty$
  and define
  \begin{equation}
    \label{eq:31}
    \begin{array}{rcl}
      u_{1+3j} (t,a)
      & =
      & S (t,a+\bar a_j)
      \\
      u_{2+3j} (t,a)
      & =
      & I (t,a+\bar a_j)
      \\
      u_{3+3j} (t,a)
      & =
      & R (t,a+\bar a_j)
    \end{array}
    \qquad\qquad\qquad \mbox{ for }
    \begin{array}{r@{\,}c@{\,}l}
      j
      & =
      & 0,1, \ldots, N
      \\
      t
      & \in
      & \II
      \\
      a
      & \in
      & \reali^+ \,.
    \end{array}
  \end{equation}
  Set $g_i (t,a) = 1$ for $i=1, \ldots, n$ and $n = 3N+3$.  Define for
  $j=0,1, \ldots, N$ and $i = 1, \ldots, n$
  \begin{equation}
    \label{eq:10}
    \begin{array}{@{}rcl@{}}
      \left[\alpha_{1+3j}[u] (a)\right]_i
      & =
      & \left\{
        \begin{array}{l@{\qquad\quad}r}
          -
          \displaystyle \sum_{\ell=1}^{N + 1}
          \int_{\bar a_{\ell-1}}^{\bar a_\ell}
          \lambda (a + \bar a_j, a') \;
          u_{2+3\ell} (a' - \bar a_{\ell - 1}) \d{a'}
          & i = 1+3j
          \\
          0
          & \mbox{otherwise}
        \end{array}
            \right.
      \\
      \left[\alpha_{2+3j}[u] (a)\right]_i
      & =
      & \left\{
        \begin{array}{l@{\qquad\quad}r}
          \displaystyle
          \sum_{\ell=1}^{N + 1}
          \int_{\bar a_{\ell-1}}^{\bar a_\ell}
          \lambda (a + \bar a_j, a') \;
          u_{2+3\ell} (a' - \bar a_{\ell - 1}) \d{a'}
          & i = 1+3j
          \\
          0
          & \mbox{otherwise}
        \end{array}
            \right.
      \\
      \left[\alpha_{3+3j}[u] (a)\right]_i
      & =
      & 0
    \end{array}
  \end{equation}
  \begin{equation}
    \label{eq:28}
    \begin{array}{rcl}
      \left[\gamma_{1+3j} (t,a)\right]_i
      & =
      & \left\{
        \begin{array}{l@{\qquad\qquad}r}
          -d_S (t,a+\bar a_j)
          & i = 1+3j
          \\
          0
          & \mbox{otherwise}
        \end{array}
            \right.
      \\
      \left[\gamma_{2+3j} (t,a)\right]_i
      & =
      & \left\{
        \begin{array}{l@{\qquad\qquad}r}
          - d_I (t,a+\bar a_j)
          - r_I (t,a+\bar a_j)
          & i = 2+3j
          \\
          0
          & \mbox{otherwise}
        \end{array}
            \right.
      \\
      \left[\gamma_{3+3j} (t,a)\right]_i
      & =
      & \left\{
        \begin{array}{l@{\qquad\qquad}r}
          - d_R (t,a+\bar a_j)
          & i = 3+3j
          \\
          r_I (t,a+\bar a_j)
          & i = 2+3j
          \\
          0
          & \mbox{otherwise}
        \end{array}
            \right.
    \end{array}
  \end{equation}
  Concerning the boundary conditions and transmission relations, we
  set $\bar x_i = \bar a_j - \bar a_{j-1}$ for all
  $i \in \left\{1, \cdots, n\right\}$ and for all
  $j=\left\{1, \cdots, N + 1\right\}$ such that
  $i - 3 (j - 1) \in \{1, 2, 3\}$.  Moreover
  \begin{equation}
    \label{eq:29}
    \begin{array}{@{}r@{\,}c@{\,}l@{\qquad}r@{\,}c@{\,}l@{\qquad}r@{\,}c@{\,}l@{}}
      \beta_1 (t,u)
      & =
      & S_b (t)
      & \beta_2 (t,u)
      & =
      & I_b (t)
      & \beta_3 (t,u)
      & =
      & R_b (t)
      \\
      \beta_{3j+1} (t,u)
      & =
      & \left(1-\eta_j (t)\right) u_{3j-2}
      & \beta_{3j+2} (t,u)
      & =
      & u_{3j-1}
      & \beta_{3j+3} (t,u)
      & =
      & \eta_j (t) \, u_{3j-2} + u_{3j}
    \end{array}
  \end{equation}

  We now verify that the assumptions required in
  Theorem~\ref{thm:general} on the functions above hold.

  \subparagraph{\ref{hyp:g} holds.} It is immediate, since
  $g_i (t,a)=1$ for all $(t,a) \in \II \times \reali^+$.

  \subparagraph{\ref{hyp:A_i} holds.} On the basis of~\eqref{eq:10},
  we have:
  \begin{flalign*}
    \norma{\alpha_i[w]}_{\L\infty(\reali^+; \reali^n)} & \leq
    \Lambda_\infty \, \norma{u}_{\L1 (\reali^+; \reali^n)} &
    \mbox{[By~\eqref{eq:Lambda2}, \eqref{eq:hyp-A1} holds with
      $A_L = \Lambda_\infty$]}
    \\
    \tv (\alpha_i[w]; (\reali^+)^n) & \le \sum_{\ell=1}^{N+1}
    \int_{\bar a_{\ell-1}}^{\bar a_\ell} \tv (\lambda (\cdot, a');
    \reali^+) \, \modulo{w_{2+3\ell} (a'-\bar a_{\ell-1})} \, \d{a'}
    \hspace{-5cm}& \mbox{[By~\ref{eq:TV3}]}
    \\
    & \leq \Lambda_\infty \norma{u}_{\L1 (\reali^+; \reali^n)} &
    \mbox{[By~\eqref{eq:Lambda2}, \eqref{eq:hyp-A1} holds with
      $A_L = \Lambda_\infty$]}
  \end{flalign*}
  \begin{flalign*}
    \norma{\alpha_i[u](a_1) {-} \alpha_i[u](a_2)} & \le
    \sum_{\ell=1}^{N+1} \! \int_{\bar a_{\ell-1}}^{\bar a_\ell} \!
    \modulo{\lambda (a_1 {+} \bar a_j, a') {-} \lambda (a_2 {+} \bar a_j,
      a')} \, \modulo{u_{2+3\ell}} (a'-\bar a_{\ell-1}) \d{a'} &
    \mbox{[By~\ref{hyp:lambda}]}
    \\
    & \le \Lambda_l \, \sum_{\ell=1}^{N+1} \int_{\bar
      a_{\ell-1}}^{\bar a_\ell} \modulo{u_{2+3\ell}} (a'-\bar
    a_{\ell-1}) \, \d{a'} \; \modulo{a_1-a_2} &
    \mbox{[By~\eqref{eq:Lambda}]}
    \\
    & \leq N \, \Lambda_L \, \norma{u}_{\L1(\reali^+; \reali^n)} \;
    \modulo{a_1-a_2}
  \end{flalign*}
  proving~\eqref{eq:hyp-A4} in~\ref{hyp:A_i} with
  $A_2 = N \, \Lambda_L \, \norma{u}_{\L1(\reali^+; \reali^n)}$.

  \subparagraph{\ref{hyp:gamma_i} holds.} Refer to~\eqref{eq:28}. The
  Lipschitz continuity of $\gamma$ directly follows
  from~\eqref{eq:22}, proving~\eqref{eq:15}. The other conditions are
  immediate consequences of~\eqref{eq:24}, \eqref{eq:27}
  and~\eqref{eq:24}.

  \subparagraph{\ref{hyp:B_i} holds.} Refer
  to~\eqref{eq:29}. Condition~\eqref{eq:barB-1} is immediate, thanks
  to the boundedness of $\eta$. \eqref{eq:barB-2}, \eqref{eq:barB-3}
  and~\eqref{eq:barB-4} follow from~\eqref{eq:iBV-data}.

  \subparagraph{\ref{hyp:init-boundary} holds.} It is an immediate
  consequence of~\eqref{eq:iBV-data}, using~\eqref{eq:31} at $t=0$.

  \subparagraph{\ref{it:Pos} holds.} It immediately follows
  from~\ref{hyp:data}.

  \subparagraph{\ref{it:Neg} holds.} Long but straightforward
  computations based on~\eqref{eq:13}, \eqref{eq:10} and~\eqref{eq:28}
  show that~\ref{it:Neg} holds.

  \subparagraph{\ref{it:Eq} holds.} It is a direct consequence
  of~\eqref{eq:13}.

  \subparagraph{Dependence on $\eta$.} Theorem~\ref{thm:general} and
  Corollary~\ref{cor:posGlog} ensure the Lipschitz continuous
  depen\-dence of the solution $(S,I,R)$ in $\L1$ on the boundary datum
  through its $\L1$ norm. In view of~\eqref{eq:29}, this yields the
  continuous dependence of the solution $(S,I,R)$ in $\L1$ on $\eta$
  in $\L\infty$.
\end{proofof}

The proof of Theorem~\ref{thm:wp2} can be obtained from the one above
exchanging the roles of the independent variables $t$ and
$a$. However, for completeness, we provide an independent proof.

\begin{proofof}{Theorem~\ref{thm:wp2}}
  We now show that Theorem~\ref{thm:general} can be iteratively
  applied to
  problem~\eqref{eq:13}--\eqref{eq:IC+BDY}--\eqref{eq:14}. To this
  aim, define $n=3$ and
  \begin{displaymath}
    u_1 (t,a)
    =
    S (t,a)
    \,,\quad
    u_2 (t,a)
    =
    I (t,a)
    \,,\quad
    u_3 (t,a)
    =
    R (t,a) \,.
  \end{displaymath}
  \begin{displaymath}
    \begin{array}{@{}r@{\,}c@{\,}l@{\quad}r@{\,}c@{\,}l@{}}
      \left[\alpha_1[u] (a)\right]_i
      & =
      & \left\{
        \begin{array}{@{}ll}
          - \int_{\reali^+} \lambda (a,a') \, u_2 (a') \d{a'}
          & i=1
          \\
          0
          & i=2,3
        \end{array}
            \right.
      &\left[\gamma_1(t,a)\right]_i
          & =
          & \left\{
            \begin{array}{@{}ll}
              -d_S (t,a)
              & i=1
              \\
              0
              & i=2,3
            \end{array}
                \right.
      \\
      \left[\alpha_2[u] (a)\right]_i
      & =
      & \left\{
        \begin{array}{@{}ll}
          \int_{\reali^+} \lambda (a,a') \, u_2 (a') \d{a'}
          & i=1
          \\
          0
          & i=2,3
        \end{array}
            \right.
      &
        \left[\gamma_2 (t,a)\right]_i
          & =
          & \left\{
            \begin{array}{@{}ll}
              - d_I (t,a) - r_I (t,a)
              & i=2
              \\
              0
              & i=1,3
            \end{array}
                \right.
      \\
      \left[\alpha_3 (a)\right]_i
      & =
      & 0
          & \left[\gamma_3 (t,a)\right]_i
          & =
      & \left\{
        \begin{array}{@{}ll}
          0
          & i=1
          \\
          r_I (t,a)
          & i=2
          \\
          -d_R (t,a)
          & i=3
        \end{array}
            \right.
    \end{array}
  \end{displaymath}
  We now iteratively apply Theorem~\ref{thm:general} on the time
  interval $[\bar t_k, \bar t_{k+1}]$ assigning, on the basis
  of~\eqref{eq:IC+BDY}, the initial and boundary data
  \begin{equation}
    \label{eq:30}
    \begin{array}{lll}
      \begin{array}{r@{\,}c@{\,}l}
        k
        & =
        & 0
        \\
        t
        & \in
        & [0, \bar t_1]
        \\
        a
        & \in
        & \reali^+
      \end{array}
        & \left\{
          \begin{array}{rcl@{}}
            u_1^o (a)
            & =
            & S_o (a)
            \\
            u_2^o (a)
            & =
            & I_o (a)
            \\
            u_3^o (a)
            & =
            & R_o (a)
          \end{array}
              \right.
        & \begin{array}{rcl@{}}
            \beta_1 (t,u)
            & =
            & S_b (t)
            \\
            \beta_2 (t,u)
            & =
            & I_b (t)
            \\
            \beta_3 (t,u)
            & =
            & R_b (t)
          \end{array}
      \\
      \begin{array}{r@{\,}c@{\,}l}
        k
        & \geq
        & 1
        \\
        t
        & \in
        &  [\bar t_k, \bar t_{k+1}]
        \\
        a
        & \in
        & \reali^+
      \end{array}
        & \left\{
          \begin{array}{rcl@{}}
            u_1^o (\bar t_k, a)
            & =
            & \left(1-\nu_k (a)\right) \, u_1 (\bar t_k-,a)
            \\
            u_2^o (\bar t_k, a)
            & =
            & u_2 (\bar t_k-,a)
            \\
            u_3^o (\bar t_k, a)
            & =
            & u_3
              (\bar t_k-,a) + \nu_k (a) \, u_1 (\bar t_k-,a)
          \end{array}
              \right.
        & \begin{array}{rcl@{}}
            \beta_1 (t,u)
            & =
            & S_b (t)
            \\
            \beta_2 (t,u)
            & =
            & I_b (t)
            \\
            \beta_3 (t,u)
            & =
            & R_b (t)
          \end{array}
    \end{array}
  \end{equation}

  We now verify that the assumptions required in
  Theorem~\ref{thm:general} on the functions above hold.

  \subparagraph{\ref{hyp:g} holds.} It is immediate, since
  $g_i (t,a)=1$ for all $(t,a) \in \II \times \reali^+$.

  \subparagraph{\ref{hyp:A_i} holds.}
  \begin{flalign*}
    \norma{\alpha_i[w]}_{\L\infty(\reali^+; \reali^n)} & \leq
    \Lambda_\infty \, \norma{u}_{\L1 (\reali^+; \reali^n)} &
    \mbox{[By~\eqref{eq:Lambda2}, \eqref{eq:hyp-A1} holds with
      $A_L = \Lambda_\infty$]}
    \\
    \tv (\alpha_i[w]; (\reali^+)^n) & \le \int_{\reali^+} \!\!\! \tv
    (\lambda (\cdot, a'); \reali^+) \modulo{w_{j} (a')} \d{a'}
    \!\!\!\!\!\!\!\!\!  & \mbox{[By~\ref{eq:TV3}]}
    \\
    & \leq \Lambda_\infty \norma{u}_{\L1 (\reali^+; \reali^n)} &
    \mbox{[By~\eqref{eq:Lambda2}, \eqref{eq:hyp-A1} holds with
      $A_L = \Lambda_\infty$]}
    \\
    \norma{\alpha_i[u](a_1) {-} \alpha_i[u](a_2)} & \le
    \int_{\reali^+} \!\!\! \modulo{\lambda (a_1, a') - \lambda (a_2,
      a')} \modulo{u_{2}} (a') \d{a'} \hspace{-80mm}
    \\
    & \le \Lambda_l \, \int_{\reali^+} \modulo{u_{2} (a')} \, \d{a'}
    \; \modulo{a_1-a_2} & \mbox{[By~\eqref{eq:Lambda}]}
    \\
    & \leq \Lambda_L \, \norma{u}_{\L1(\reali^+; \reali^n)} \;
    \modulo{a_1-a_2}\,. & \mbox{[\eqref{eq:hyp-A4} holds with
      $A_2 = \Lambda_L\norma{u}_{\L1(\reali^+; \reali^n)}$]}
  \end{flalign*}

  \subparagraph{\ref{hyp:gamma_i} holds.} The Lipschitz continuity of
  $\gamma$ directly follows from~\eqref{eq:22},
  proving~\eqref{eq:15}. The other conditions are immediate
  consequences of~\eqref{eq:24}, \eqref{eq:27} and~\eqref{eq:24}.

  \subparagraph{\ref{hyp:B_i} holds.} The definitions~\eqref{eq:30}
  and~\eqref{eq:iBV-data} directly imply~\eqref{eq:barB-2},
  \eqref{eq:barB-3} and~\eqref{eq:barB-4}.

  \subparagraph{\ref{hyp:init-boundary} holds.} It directly follows
  from~\eqref{eq:30}.

  \subparagraph{\ref{it:Pos} holds.} It immediately follows
  from~\ref{hyp:data}.

  \subparagraph{\ref{it:Neg} holds.} Long but straightforward
  computations based on~\eqref{eq:13}, \eqref{eq:10} and~\eqref{eq:28}
  show that~\ref{it:Neg} holds.

  \subparagraph{\ref{it:Eq} holds.} It is a direct consequence
  of~\eqref{eq:13}.

  \subparagraph{Dependence on $\nu$.} Repeat the same argument used in
  the final part of the proof of Theorem~\ref{thm:wp}, replacing the
  boundary datum with the initial datum.
\end{proofof}

\bigskip

\noindent\textbf{Acknowledgments:} Part of this work was supported by the  PRIN~2015 project \emph{Hyperbolic Systems of Conservation Laws
  and Fluid Dynamics: Analysis and Applications} and by the
GNAMPA~2018 project \emph{Conservation Laws: Hyperbolic Games,
  Vehicular Traffic and Fluid dynamics}. The \emph{IBM Power Systems
  Academic Initiative} substantially contributed to the numerical
integrations.

{\small

  \bibliographystyle{abbrv}

  \bibliography{sir_6.bib} }

\end{document}